\newcommand \fk[1]{{{\mathfrak #1}}}
\newcommand \C[1]{{\mathcal #1}}
\newcommand\fg{\mathfrak g}
\newcommand \bC{{\mathbb C}}
\newcommand \bR{{\mathbb R}}
\newcommand \bZ{{\mathbb Z}}
\newcommand \bQ{{\mathbb Q}}
\newcommand \bN{{\mathbb N}}
\newcommand\bW{{\mathbb W}}
\newcommand\sg{{\mathsf{sig}}}
\newcommand\CB{{\C B}}
\newcommand\CH{{\C H}}
\newcommand\CO{{\C O}}
\newcommand\CX{{\C X}}
\newcommand\al{{\alpha}}
\newcommand\fa{{\mathfrak a}}
\newtheorem{theorem}{Theorem}[section]
\newtheorem{corollary}[theorem]{Corollary}
\newtheorem{lemma}[theorem]{Lemma}
\newtheorem{proposition}[theorem]{Proposition}
\newtheorem{definition}[theorem]{Definition}
\newtheorem{remark}[theorem]{Remark}
\newcommand\Hom{\operatorname{Hom}}
\newcommand\End{\operatorname{End}}
\newcommand\tr{\operatorname{tr}}
\newcommand\Irr{\mathsf{Irr}}
\newcommand\good{\mathsf{good}}
\newcommand\el{\mathsf{ell}}
\newcommand\ind{\mathsf{ind}}
\newcommand\rig{\mathsf{rigid}}
\newcommand\diff{\mathsf{diff}}
\newcommand\Id{\operatorname{Id}}
\newcommand\cind{\operatorname{c-ind}}
\def\<{\langle} 
\def\>{\rangle}
\numberwithin{equation}{section}
\begin{document}

\title{Types and unitary representations of reductive $p$-adic groups}

\author
{Dan Ciubotaru}
        \address[D. Ciubotaru]{Mathematical Institute, University of Oxford, Oxford OX2 6GG, UK}
        \email{dan.ciubotaru@maths.ox.ac.uk}

\begin{abstract}We prove that for every Bushnell-Kutzko type that satisfies a certain rigidity assumption, the equivalence of categories between the corresponding Bernstein component and the category of modules for the Hecke algebra of the type induces a bijection between irreducible unitary representations in the two categories. This is a generalization of the unitarity criterion of Barbasch and Moy for representations with Iwahori fixed vectors.
\end{abstract}

\thanks{}

\subjclass[2010]{22E50}

\maketitle


\section{Introduction}

\subsection{}Let $F$ denote a nonarchimedean local field with finite residue field. Let $G$ be the $F$-points of a connected reductive algebraic group $\C G$ defined over $F$. Fix a Haar measure $\mu$ on $G$. Let $\CH=\CH(G)$ be the Hecke algebra of $G$, i.e., $
\CH(G)$ is the space of locally constant, compactly supported functions $f:G\to \bC$ endowed with the convolution product with respect with $\mu$. Then $\CH$ is an associative nonunital algebra. As it is well known $\CH$ acts on every complex $G$-representation and this induces an equivalence  between the category $\C C(G)$ of smooth (complex) $G$-representations and the category of nondegenerate $\CH$-modules. 

 Let $e\in \CH$ be an idempotent. Examples of idempotents include $e_K=\mu(K)^{-1} \delta_K$, where $\delta_K$ is the indicator function for a compact open subgroup $K$ in $G$. Then $e\CH e$ is a associative subalgebra of $\CH$ and $e\CH e$ has $e$ as its identity element. Let $\Irr G$ denote the set of isomorphism classes of irreducible smooth complex $G$-representations and let $\Irr_e G$ be the subset of irreducible representations $(\pi, V)$ such that $eV=\pi(e)V\neq 0$. Let $\Irr(e\CH e)$ denote the set of (isomorphism classes of) simple $e\CH e$-modules. We have a natural bijection
\begin{equation}\label{e:e-bijection}
\Irr_e G\to \Irr(e\CH e),\quad V\mapsto V_e.
\end{equation}
For every $f\in \CH$, define $f^*\in \CH$ by $f^*(g)=\overline{f(g^{-1})}$, $g\in G$. The operation $*$ is a conjugate-linear anti-involution of $\CH$. Suppose that $e$ is self-adjoint, i.e., $e^*=e$. The algebra $e\CH e$ inherits the operation $*$ and has a natural structure of  a normalized Hilbert algebra. The results of Bushnell, Henniart, and Kutzko \cite{BHK} give an identification between the supports of the Plancherel measures under the bijection (\ref{e:e-bijection}). More precisely, let $\widehat G$ denote the unitary dual of $G$ (the topological space of irreducible unitary representations of $G$ on Hilbert spaces) with the Plancherel measure $\hat \mu$ dual to $\mu$. Let $\widehat G_r$ denote the support of $\widehat \mu$, the space of tempered irreducible $G$-representations. Denote by $\widehat G_r(e)$ the set of representations $(\pi,V)\in \widehat G_r$ such that $\pi(e)\neq 0$. On the other hand, $e\CH e$ has a $C^*$-algebra completion $C^*_r(e\CH e)$ whose dual $\widehat{C^*}_r(e\CH e)$ carries a Plancherel measure $\hat\mu_{e\CH e}$. By \cite[Theorem A]{BHK},  the bijection (\ref{e:e-bijection}) induces a homeomorphism
\begin{equation}
\widehat m_e: \widehat G_r(e)\to C^*_r(e\CH e),
\end{equation}
such that for every Borel set $S$ of $\widehat G_r(e)$, $\widehat\mu(S)=e(1) \widehat\mu_{e\CH e}(\widehat m_e(S))$. In other words, (\ref{e:e-bijection}) induces a natural bijection between irreducible tempered representations.

It is natural to ask if in fact (\ref{e:e-bijection}) induces a bijection of irreducible unitary representations. (We are identifying here preunitary smooth $G$-representations with unitary $G$-representations.) In complete generality, this is clearly false, as seen, for example, by taking $e=e_{K_0}$ where $K_0$ is a maximal special compact open subgroup. In that case, if $V$ is any irreducible representation with a $*$-invariant hermitian form such that $V^{K_0}\neq 0$, then $V^{K_0}$ is automatically a one-dimensional unitary $e_{K_0}\CH e_{K_0}$-module.

\subsection{}For an idempotent $e$, define $\C C_e(G)$ to be the full subcategory of $\C C(G)$ consisting of representations $(\pi,V)$ such that $eV=V$. Following \cite{BHK}, we say that $e$ is {\it special} if $\C C_e(G)$ is closed relative to the formation of $G$-subquotients. This is equivalent \cite[3.12]{BK} to the requirement that the functor
\begin{equation}
m_e:\C C_e(G)\to e\CH e-\text{Mod},\quad V\mapsto eV
\end{equation}
is an equivalence of categories. This situation applies to the idempotents coming from the theory of types initiated by Howe and Moy \cite{HM} and Bushnell and Kutzko, see for example \cite{BK}. This is a generalization of the Borel and Casselman equivalence of categories for representations with Iwahori fixed vectors \cite{Bo}. Let $\C K$ be a compact open subgroup and $(\rho,W)$ a smooth irreducible (finite dimensional) $\C K$-representation. Let $(\rho^\vee,W^\vee)$ be the contragredient representation. The pair $(\C K,\rho)$ is called a {\it type} if \begin{equation}\label{e:K-idempotent}
e_\rho(x)=\mu(\C K)^{-1} \dim W \tr_W(\rho(x^{-1}))\delta_{\C K}(x),\ x\in G,\
\end{equation}
 is a special idempotent. 
 Our main result is the following:

\begin{theorem}\label{t:main}
Suppose that $e\in \CH$ is a self-adjoint special idempotent such that $e=e_\rho$ for a type $(\C K,\rho)$. If $(\C K,\rho) $ is \emph{rigid} (in the sense of Definition \ref{d:rigid-type}), then $m_e$ induces a bijection between unitary representations in $\C C_e(G)$ and unitary $e\CH e$-modules.
\end{theorem}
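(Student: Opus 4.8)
The plan is to reduce the theorem to the statement that an \emph{irreducible} object $V\in\mathcal{C}_e(G)$ is unitary if and only if $m_e(V)=eV$ is unitary; since the irreducible objects of $\mathcal{C}_e(G)$ are exactly the $V\in\Irr G$ with $eV\neq 0$ and on these $m_e$ restricts to the bijection $(\ref{e:e-bijection})$, this yields the asserted correspondence of unitary representations. The first ingredient is the matching of Hermitian structures. Since $e^*=e$, the anti-involution $*$ restricts to $e\mathcal{H}e$, and the Hermitian-duality functor $V\mapsto V^{\mathrm h}$ (the one characterised by: $V$ is Hermitian iff $V\cong V^{\mathrm h}$) commutes with $V\mapsto eV$, i.e.\ $e(V^{\mathrm h})\cong (eV)^{\mathrm h}$ --- the $e$-component of the Hermitian dual being the Hermitian dual of the $e$-component (this uses $e^*=e$). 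As a $G$-invariant Hermitian form on $V$ is an element of $\Hom_G(V,V^{\mathrm h})$ and $m_e$ is an equivalence, restriction to $eV$ is a bijection from invariant Hermitian forms on $V$ onto $e\mathcal{H}e$-invariant Hermitian forms on $eV$, and it carries nondegenerate forms to nondegenerate forms (an equivalence of categories preserves and reflects monomorphisms, and a form is nondegenerate precisely when the morphism $V\to V^{\mathrm h}$ it determines is a monomorphism, hence for irreducible $V$ an isomorphism). In particular $V$ is Hermitian iff $eV$ is, and the easy half of the theorem follows: if $V$ is unitary, the restriction to $eV$ of a positive-definite invariant form is again positive-definite, so $eV$ is unitary.

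The content is the converse: if $eV$ is unitary then so is $V$ --- note this is not formal, since the (unique up to a real scalar) invariant form on the irreducible smooth representation $V$ may be nondegenerate but indefinite while its restriction to the finite-dimensional subspace $eV$ is definite. Here one runs, in the present generality, the signature-deformation argument of Barbasch and Moy. After passing to a single Bernstein block, the rigidity of $(\mathcal{K},\rho)$ is used to equip $e\mathcal{H}e$ with the structure needed to imitate the affine Hecke-algebra picture: standard modules and their Langlands quotients, a one-parameter family of unramified twists, and a notion of tempered module --- all compatible under $m_e$ with the corresponding structures on the $G$-side, namely tempered objects through the Plancherel homeomorphism $\widehat m_e\colon\widehat G_r(e)\to\widehat{C^*}_r(e\mathcal{H}e)$ of \cite{BHK}, and standard modules, their canonical intertwining maps and unramified twists through the naturality of $m_e$ together with the compatibility of $m_e$ with parabolic induction and Jacquet restriction that rigidity provides. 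Granting this, realise an irreducible Hermitian $V$ as the Langlands quotient of a family of standard modules $X_t=X(\sigma,\nu_t)$, $t\in[0,1]$, with $X_0$ the parabolic induction of a unitary tempered representation (hence tempered, unitary, with positive-definite invariant form) and $X_1\twoheadrightarrow V$. The invariant form on $X_t$ varies real-analytically in $t$, stays nondegenerate away from the finitely many values of $t$ at which $X_t$ is reducible, and its signature --- hence also that of the form induced on the Langlands quotient, and in particular whether the latter is definite at $t=1$ --- changes only at those values, by an amount read off from the submodule structure. As the whole configuration (invariant forms, standard modules, their embeddings, reducibility loci, and the tempered endpoint) is transported faithfully through $m_e$, the bookkeeping that decides definiteness of the form on $V$ returns the same answer as for the form on $eV$; therefore $V$ is unitary iff $eV$ is.

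The main obstacle is precisely the structural comparison invoked in the second paragraph, not the subsequent signature bookkeeping. One must show that the analytically defined tempered $e\mathcal{H}e$-modules --- those appearing in the Plancherel homeomorphism $\widehat m_e$ --- are exactly the tempered modules of the Langlands/deformation picture; that the $*$-operation inherited from $\mathcal{H}$ is the one with respect to which tempered modules carry a positive-definite form; and that standard modules, their intertwining maps and their reducibility points correspond under $m_e$ to those on the $G$-side, which ultimately rests on $m_e$ being compatible with parabolic induction and restriction through a suitable system of covers of $(\mathcal{K},\rho)$. Securing these compatibilities is exactly what the rigidity hypothesis is for, and it is there --- rather than in the deformation argument itself, which by now is routine --- that the real work lies.
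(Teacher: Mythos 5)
Your first paragraph (the matching of hermitian structures under $m_e$ and the easy direction) is fine and agrees with the paper. The hard direction is where there is a genuine gap, and it is not located where you place it. You propose to transport the entire Langlands/deformation apparatus (standard modules, intertwining operators, unramified twists, temperedness, reducibility loci) through $m_e$ and then run the Barbasch--Moy deformation on the Hecke-algebra side; you concede that establishing these compatibilities is ``the real work'' and leave it undone, asserting that the rigidity hypothesis is what supplies them. That is not what rigidity is: Definition \ref{d:rigid-type} is a purely cocenter-theoretic condition, namely surjectivity of $\bigoplus_{K\supseteq \C K} e_\rho \CH(K) e_\rho \to \overline{e_\rho\CH_c(G)e_\rho}$, and it says nothing about covers, parabolic induction, or the module-theoretic structure of $e_\rho\CH e_\rho$. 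The route you sketch is exactly the one the paper is built to avoid: for a general type one does not know that $e_\rho\CH e_\rho$ is an affine Hecke algebra, nor that $m_e$ matches standard modules or preserves irreducible temperedness, and the paper emphasizes that its proof uses neither of these inputs.

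Moreover, even granting the transport, the decisive step is missing from your sketch and is mislabelled as ``routine bookkeeping.'' Positivity of the form on $eV$ only tells you that the $s$-part of the signature vanishes on those $K$-types meeting $e_\rho$; to conclude positivity of the form on all of $V$ one needs a linear-independence statement: if $\C V=\sum_i b_i V_i$ is a combination of the irreducible tempered representations forming a basis of $\overline R_c(G,\fk s)_\bQ$ and $m^{e_\rho\CH(K)e_\rho}_{\pi(e_\rho)\C V}=0$ for every compact open $K\supseteq\C K$, then $\C V=0$, hence all $b_i=0$. In Barbasch--Moy this is the Kazhdan--Lusztig linear independence, available only because $e_I\CH e_I$ is an affine Hecke algebra; in the paper it is deduced, for any rigid type, from the rigid trace Paley--Wiener theorem (Corollary \ref{c:rigid-type}) together with Definition \ref{d:rigid-type}, which guarantees that traces against $\bigoplus_{K\supseteq\C K}e_\rho\CH(K)e_\rho$ already exhaust the dual of $\overline R_c(G,\rho)$. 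The paper's actual argument is: the Vogan-type signature theorem over \emph{all} compact open subgroups (Theorem \ref{t:signature2}) gives $\sg_V^K=\sum_i(a_i+sb_i)m_{V_i}^K$ with coefficients independent of $K$; unitarity of $eV$ forces $\sum_i b_i m^{e_\rho\CH(K)e_\rho}_{\pi(e_\rho)V_i}=0$ for all $K\supseteq\C K$; rigidity plus Corollary \ref{c:rigid-type} then kill all $b_i$, so the form on $V$ is definite. None of this requires the structural comparison your proposal hinges on, and without that comparison (which is unavailable in this generality) your argument does not close.
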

In section \ref{s:rigid-types}, we show that, in practice, the notion of rigid type is not restrictive. When $G=GL(n,F)$, it is automatic that every type is rigid. More generally, it is easy to see that if $\C K$ is a subgroup of an Iwahori subgroup, then $(\C K,\rho)$ is rigid; this situation covers all types of positive depth in the sense of Moy and Prasad \cite{MP}. On the other hand, if $(\C K,\rho)$ is a level zero type in the sense of Morris \cite{Mo}, then again we can show that $(\C K,\rho)$ is rigid.

\subsection{} Define the $\rho$-spherical Hecke algebra $\CH(G,\rho)$ to be the convolution algebra of locally constant compactly supported functions $f:G\to \End_\bC(W^\vee)$ such that $f(k_1 x k_2)= \rho^\vee(k_1) f(x) \rho^\vee(k_2)$, $k_1,k_2\in \C K$, $x\in G$. This algebra is a normalized Hilbert algebra and it is Morita equivalent with $e_\rho\CH e_\rho$. Set $V_\rho=\Hom_{\C K}[\rho,V]$; this is an $\CH(G,\rho)$-module. As a consequence, we obtain the following equivalence.

\begin{corollary}\label{c:main}
Suppose $(\C K,\rho)$ is a rigid type. Then the equivalence of categories 
\[m_\rho: \C C_{e_\rho}(G)\to \CH(G,\rho)-\text{Mod},\quad V\mapsto V_\rho, 
\]
induces a bijection between unitary representations in the two categories.
\end{corollary}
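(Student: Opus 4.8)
The corollary is a bookkeeping translation of Theorem \ref{t:main} across a Morita equivalence, so the proof should be short and consist of two checks: (1) that the functors $m_e$ and $m_\rho$ agree up to the Morita equivalence between $e_\rho\CH e_\rho$ and $\CH(G,\rho)$, and (2) that this Morita equivalence is compatible with the $*$-structures and hence sends unitary modules to unitary modules in both directions. Granting these, the statement follows immediately: Theorem \ref{t:main} gives a unitarity-preserving bijection on one side, and transporting it along the Morita equivalence yields the asserted bijection for $m_\rho$.

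\textbf{Step 1: identify the functors.} The standard description of the Bushnell--Kutzko theory gives a canonical isomorphism $e_\rho\CH e_\rho \cong \CH(G,\rho)\otimes_\bC \End_\bC(W^\vee)$ of $\bC$-algebras (this is essentially \cite{BK}); under this isomorphism the module $eV = e_\rho V$ corresponds to $V_\rho\otimes_\bC W^\vee = \Hom_{\C K}[\rho,V]\otimes W^\vee$, so $m_e$ and $m_\rho$ are intertwined by the Morita functor $N\mapsto N\otimes_\bC W^\vee$ from $\CH(G,\rho)\text{-Mod}$ to $e_\rho\CH e_\rho\text{-Mod}$, with quasi-inverse $M\mapsto \Hom_{\End_\bC(W^\vee)}[W^\vee,M]$. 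In particular, since $V\mapsto V_\rho$ is already known to be an equivalence $\C C_{e_\rho}(G)\to\CH(G,\rho)\text{-Mod}$, the diagram of equivalences commutes up to natural isomorphism, and irreducibles correspond to irreducibles on all three sides.

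\textbf{Step 2: track the $*$-structures.} Here one uses that $e_\rho$ is self-adjoint (so $e_\rho\CH e_\rho$ is a normalized Hilbert algebra, as in the introduction) and that $\CH(G,\rho)$ is a normalized Hilbert algebra with the stated involution $f^*(x) = \overline{f(x^{-1})}^{\,t}$ (acting on $\End_\bC(W^\vee)$, transpose-conjugate), these being compatible under the isomorphism of Step 1 once $\End_\bC(W^\vee)$ is given its natural $*$-structure coming from a $\C K$-invariant inner product on $W$. A simple $e_\rho\CH e_\rho$-module $U$ is unitary iff it carries a positive-definite $e_\rho\CH e_\rho$-invariant hermitian form; under $N\leftrightarrow N\otimes W^\vee$ such a form corresponds to the tensor product of an $\CH(G,\rho)$-invariant hermitian form on $N$ with the fixed positive-definite form on $W^\vee$, and positivity of one is equivalent to positivity of the other. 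Likewise, on the representation side, $V$ is unitary as a $G$-representation iff it is unitary as an $e_\rho\CH e_\rho$-module via $m_e$ and simultaneously iff it is unitary as an $\CH(G,\rho)$-module via $m_\rho$, because all three invariant hermitian forms are identified by the natural isomorphisms. (The existence/uniqueness of the invariant hermitian form on an irreducible, and its transport along equivalences, is the routine part.)

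\textbf{Conclusion and main point.} Combining Steps 1 and 2: the bijection produced by Theorem \ref{t:main} between unitary objects of $\C C_{e_\rho}(G)$ and unitary $e_\rho\CH e_\rho$-modules, composed with the $*$-compatible Morita equivalence $e_\rho\CH e_\rho\text{-Mod}\to\CH(G,\rho)\text{-Mod}$, is exactly $m_\rho$ on objects and restricts to a bijection on unitary objects; since $m_\rho$ is already a full equivalence of categories, this restriction is the claimed bijection. The only genuine content beyond Theorem \ref{t:main} is the verification that the Morita equivalence is $*$-compatible, i.e., that the Hilbert-algebra structures on $e_\rho\CH e_\rho$ and on $\CH(G,\rho)$ match under the standard isomorphism with $\End_\bC(W^\vee)$ carrying its natural $*$; once phrased correctly this is a direct check, so there is no serious obstacle here beyond being careful about the normalizations of the inner product on $W$ and the resulting traces in \eqref{e:K-idempotent}.
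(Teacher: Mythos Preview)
Your proposal is correct and follows essentially the same route as the paper: reduce to Theorem~\ref{t:main} via the Morita equivalence $e_\rho\CH e_\rho \cong \CH(G,\rho)\otimes \End_\bC(W)$, and check that this equivalence is $*$-compatible (the paper invokes \cite[Proposition 4.4]{BHK}, which packages this as an isomorphism of normalized Hilbert algebras). One small bookkeeping point: in the paper's conventions the tensor factor is $\C E_\rho=\End_\bC(W)$ rather than $\End_\bC(W^\vee)$, and correspondingly $e_\rho V\cong V_\rho\otimes W$; your $W^\vee$ is harmless since the two are identified via the fixed inner product, but it is worth aligning with the stated isomorphism to avoid confusion.
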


\subsection{}These results represent a generalization of the Barbasch and Moy preservation of unitarity \cite{BM,BM2} which is the case $e=e_I$ where $I$ is an Iwahori subgroup for a split group $G$. The ideas of \cite{BM,BM2} were extended further in \cite{BC} to situations where $\CH(G,\rho)$ is known to be isomorphic as a $*$-algebra to an affine Hecke algebra with geometric parameters in the sense of Lusztig \cite{Lu,Lu2}. The \cite{BM,BM2} proof in the Iwahori case (and in the generalization \cite{BC}) is based on three main ingredients:
\begin{enumerate}
\item[(a)] Vogan's signature character \cite{Vo};
\item[(b)] the fact that $m_e$ maps irreducible tempered representations to irreducible tempered representations;
\item[(c)] a certain linear independence result proved using Kazhdan-Lusztig theory \cite{KL} and a technical reduction to Lusztig graded affine Hecke algebra \cite{Lu,BM2}.
\end{enumerate}
In particular, this approach is dependent on the knowledge that $e_I\CH e_I$ is a specialization of the generic affine Hecke algebra (possibly with parameters). 

\smallskip

In the present paper, the proof of Theorem \ref{t:main} still relies on Vogan's signature character, but instead of considering $K$-signature characters with respect to the maximal special compact open subgroup $K=K_0$, we consider the signature characters with respect to all conjugacy classes of (maximal) compact open subgroups. The second and essential difference is that the necessary linear independence is obtained as a consequence of the trace Paley-Wiener Theorem proved in \cite{BDK}, see also the work of  Henniart and Lemaire \cite{HL} and \cite{CH2} for more recent accounts and generalizations. We also need to make use of the interplay between the {\it rigid} cocenter and the rigid representation space, in the sense of \cite{CH,CH2}. We emphasize that for this argument, we do not need to know the precise structure of the algebra $e_\rho\CH e_\rho$ or the statement (b) above, nor do we need the reduction to real infinitesimal character for unitary representations of affine Hecke algebras from \cite{BM2}.

\subsection{}As mentioned already, the hypotheses of Theorem \ref{t:main} are known to hold in many situations. For example, one may consider the Moy-Prasad \cite{MP} groups $G_{x,r}$, where $x$ is a point in the Bruhat-Tits building $X(G)$ of $G$ and $r>0$.  Denote $\CH(G,G_{x,r})=e_{G_{x,r}}\CH e_{G_{x,r}}$, which is the Hecke algebra of $G_{x,r}$-biinvariant functions. By \cite[Proposition 5.3]{BS}, the idempotent $e_{G_{x,r}}$ is special. Every $G_{x,r}$, $r>0$ is contained in an Iwahori subgroup. Hence, we have the following:

\begin{corollary}Let $(\pi,V)$ be a irreducible smooth $G$-representation such that $V^{G_{x,r}}\neq 0$, for $x\in X(G)$ and $r>0$. Then $V$ is unitary if and only if $V^{G_{x,r}}$ is a unitary $\CH(G,G_{x,r})$-module.
\end{corollary}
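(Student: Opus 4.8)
The plan is to deduce this Corollary directly from Corollary~\ref{c:main} (or equivalently Theorem~\ref{t:main}), so the only real work is to verify that the hypotheses apply to the pair $(\C K,\rho)=(G_{x,r},\one)$ with the trivial representation. First I would record that, by \cite[Proposition 5.3]{BS}, the idempotent $e_{G_{x,r}}$ is special, so that $(G_{x,r},\one)$ is a type in the sense of \eqref{e:K-idempotent}, and that $e_{G_{x,r}}=e_{G_{x,r}}^*$ is manifestly self-adjoint since it is a (normalized) indicator function of a compact open subgroup. Second, I would invoke the discussion in Section~\ref{s:rigid-types}: every Moy--Prasad group $G_{x,r}$ with $r>0$ is contained in an Iwahori subgroup of $G$, and the excerpt already asserts that any type supported on a subgroup of an Iwahori subgroup is rigid in the sense of Definition~\ref{d:rigid-type}. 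Hence $(G_{x,r},\one)$ is a rigid type.

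Once rigidity is in hand, I would apply Corollary~\ref{c:main} with $\rho=\one$. In that case $W=\bC$ with trivial $\C K$-action, $W^\vee=\bC$, and $\End_\bC(W^\vee)=\bC$, so the $\rho$-spherical Hecke algebra $\CH(G,\one)$ is exactly the convolution algebra of locally constant compactly supported $G_{x,r}$-biinvariant functions, i.e. $\CH(G,\one)=e_{G_{x,r}}\CH e_{G_{x,r}}=\CH(G,G_{x,r})$. Moreover $V_\rho=\Hom_{G_{x,r}}[\one,V]=V^{G_{x,r}}$. Thus the functor $m_\rho\colon V\mapsto V_\rho$ is precisely $V\mapsto V^{G_{x,r}}$, and Corollary~\ref{c:main} says it restricts to a bijection between unitary representations in $\C C_{e_{G_{x,r}}}(G)$ and unitary $\CH(G,G_{x,r})$-modules. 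Finally, for an \emph{irreducible} smooth $G$-representation $V$ with $V^{G_{x,r}}\neq 0$, the condition $eV=V$ holds automatically because $\C C_{e_{G_{x,r}}}(G)$ is closed under subquotients and $V$ is generated by $V^{G_{x,r}}$ (equivalently, $V$ lies in the Bernstein component cut out by the special idempotent); so $V\in\C C_{e_{G_{x,r}}}(G)$, and the Corollary applies to give that $V$ is unitary if and only if $V^{G_{x,r}}$ is a unitary $\CH(G,G_{x,r})$-module.

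The only genuinely substantive point — everything else is bookkeeping — is the verification that $G_{x,r}\subseteq I$ for some Iwahori subgroup $I$ when $r>0$, together with the assertion (established in Section~\ref{s:rigid-types}) that a type inflated from a subgroup of an Iwahori is automatically rigid. I expect the containment in an Iwahori to be the easiest part: one chooses an alcove of the Bruhat--Tits building whose closure contains $x$, takes $I$ to be the corresponding Iwahori, and observes from the Moy--Prasad filtration definitions that $G_{x,r}\subseteq G_{x,0^+}\subseteq I$ for every $r>0$. So there is no real obstacle here; the Corollary is an immediate specialization, and the write-up amounts to checking that the trivial type on $G_{x,r}$ satisfies "self-adjoint, special, rigid" and unwinding the identifications $\CH(G,\one)=\CH(G,G_{x,r})$ and $V_{\one}=V^{G_{x,r}}$.
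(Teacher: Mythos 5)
Your proposal is correct and follows exactly the route the paper intends: specialness of $e_{G_{x,r}}$ via \cite[Proposition 5.3]{BS}, rigidity of $(G_{x,r},\one)$ from the containment $G_{x,r}\subseteq I$ in an Iwahori subgroup (the first observation of Section \ref{s:rigid-types}), and then a direct specialization of Theorem \ref{t:main}/Corollary \ref{c:main} with the identifications $\CH(G,\one)=\CH(G,G_{x,r})$ and $V_\one=V^{G_{x,r}}$. This is the same argument the paper gives (implicitly, in the introduction), just written out with the bookkeeping made explicit.
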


We give a brief outline of the paper. In section \ref{s:signature}, we present the definitions and main properties of the $K$-signature character of a hermitian $G$-representation and adapt Vogan's signature theorem to our setting. The main result of the section is Theorem \ref{t:signature}. In section \ref{s:rigid}, we explain the ideas around the cocenter and trace Paley-Wiener theorem that we need for our applications. The main points are the rigid trace Paley-Wiener theorems for Bernstein components, Corollaries \ref{c:rigid} and \ref{c:rigid-type}. In section \ref{s:proofs}, we use these results to give proofs of Theorem \ref{t:main} and of Corollary \ref{c:main}. Finally, in section \ref{s:rigid-types}, we discuss the notion of rigid type.

\medskip

{\bf Acknowledgements.} I am grateful to Xuhua He for many discussions about the cocenter and the trace Paley-Wiener Theorem. In fact, some of the ideas in this paper originate with the case of the Iwahori component  sketched in \cite[Appendix]{CH}. I thank Guy Henniart for pointing out a gap in an argument in a previous version of the paper; this lead to the introduction of the notion of rigid type. The results of this paper were presented during the special program on representation theory of reductive groups at the Weizmann Institute, Israel, in June 2017.  I thank the organizers for the invitation to attend this excellent workshop and for the support provided by the Weizmann Institute during my stay. 

\smallskip

This research was supported in part by the EPSRC grant EP/N033922/1(2016).

\section{Vogan's signature character}\label{s:signature}
In this section, we recall the definition of the multiplicity $K$-character and signature $K$-character for an admissible $G$-representation $(\pi,V)$ , where $K$ is a compact open subgroup, and explain an analogue of a formula of Vogan for the signature character \cite{Vo,ALTV}.

\subsection{}We begin by recalling  the idea of the Grothendieck group of modules with nondegenerate hermitian forms from \cite{ALTV}. As in the introduction, $\CH$ denotes the Hecke algebra of a reductive $p$-adic group $G$. A star operation $\kappa:\CH\to\CH$ is a conjugate linear involutive algebra anti-automorphism. The main example for us in this paper is \[\kappa(f)=f^*, \quad f^*(g)=\overline{f(g^{-1})}.\]

Let $(\pi,V)$ be a smooth $\CH$-module. A $\kappa$-invariant hermitian form $\langle~,~\rangle_V$ is a hermitian form on $V$ such that
\[\langle\pi(f) v,w\rangle_V=\langle v,\pi(\kappa(f))w\rangle_V, \text{ for all }v,w\in V.
\]
We say that $V$ is hermitian if it has a nondegenerate $\kappa$-invariant hermitian form.
 
Suppose $K$ is a compact open subgroup of $G$. Every smooth irreducible $K$-representation $(\mu,E_\mu)$ is finite dimensional, hence unitary, and we fix a positive definite $K$-invariant form $\langle~,~\rangle_\mu$ on $E_\mu$. Let $\widehat K$ denote the set of equivalence classes of such $K$-representations. If $V$ is admissible, denote by $V(\mu)=\pi(e_\mu)V$ the isotypic component of $\mu$ in $V$, and by
\[V_\mu=\Hom_{\CH(K)}[E_\mu,V]
\]
the multiplicity space of $\mu$. Set $m_V^K(\mu)=\dim V_\mu<\infty$. Then 
\[V=\sum_{\mu\in\widehat K} V(\mu),\quad V(\mu)=V_\mu\otimes E_\mu. 
\]
If $V$ has an invariant hermitian form, let $\langle~,~\rangle_{V(\mu)}$ denote the restriction of the form $\langle~,~\rangle_V$ to $V(\mu)$. This induces a  hermitian form $\langle~,~\rangle^\mu_V$ on $V_\mu$ such that
\[(V(\mu),\langle~,~\rangle_{V(\mu)})=(V_\mu,\langle~,~\rangle^\mu_V)\otimes (E_\mu,\langle~,~\rangle_\mu).
\]
Define $p_V^K(\mu)$, $q_V^K(\mu)$, and $r_V^K(\mu)$ to be the dimension of the positive definite subspace of $(V_\mu,\langle~,~\rangle^\mu_V)$, the dimension of the negative definite subspace, and the dimension of the radical, respectively:
\[m^K_V(\mu)=p_V^K(\mu)+q_V^K(\mu)+r_V^K(\mu).
\]
If $V$ is admissible, let $V^{h,\kappa}$ denote the hermitian dual module. This is an admissible module and $V^{h,\kappa}_{\kappa(\mu)}\cong (V_\mu)^h,$ where $(V_\mu)^h$ is the hermitian dual of the finite dimensional vector space $V_\mu$. Notice that when $V$ is irreducible, $V$ has a nondegenerate $\kappa$-invariant hermitian form if and only if $V\cong V^{h,\kappa}$ as $\CH$-modules (see for example \cite[Proposition 8.9]{ALTV}).

If $U$ is an $\CH$-submodule of $(V,\langle~,~\rangle_V)$, denote by $U^\perp$ the orthogonal complement of $U$ in $V$ with respect to $\langle~,~\rangle_V$. 

If $R$ is any admissible module, define the hyperbolic form on $R\oplus R^{h,\kappa}$, i.e., the nondegenerate $\kappa$-hermitian form $\langle~,\rangle_{hyp}$:
\begin{equation}
\langle R,R\rangle_{hyp}=0,\ \langle R^{h,\kappa},R^{h,\kappa}\rangle_{hyp}=0,\ \langle a,\xi\rangle_{hyp}=\overline{\xi(a)},\ a\in R,\ \xi\in R^{h,\kappa}. 
\end{equation}
Notice that $p^K_{R\oplus R^{h,\kappa}}=q^K_{R\oplus R^{h,\kappa}}=m^K_{R}=m^K_{R^{h,\kappa}}.$

\begin{definition}[{\cite[Definition 15.5]{ALTV}}] The Grothendieck group of admissible $\kappa$-hermitian $\CH$-modules is the abelian group $R(\CH)^\kappa$ generated by $(V,\langle~,~\rangle_V)$, where $V$ is a smooth admissible $\CH$-module with the nondegenerate $\kappa$-invariant form $\langle~,~\rangle_V$ subject to the following relations: whenever $U$ is an $\CH$-submodule of $V$ and $R$ is the radical of the form $\langle~,~\rangle_U$ obtained by restricting $\langle~,~\rangle_V$ to $U$, we have
\[ [V,\langle~,~\rangle_V]=[U/R,\langle~,~\rangle_{U/R}]+[U^\perp/R,\langle~,~\rangle_{U^\perp/R}]+[R\oplus R^{h,\kappa},\langle~,~\rangle_{hyp}].
\]
For every compact open subgroup $K$, we have a well defined signature homomorphism
\[\sg^K_\bullet=(p^K_\bullet,q^K_\bullet):R(\CH)^\kappa\to Fun[\widehat K, \bZ^2],\quad [V,\langle~,~\rangle_V]\mapsto (p_V^K,q_V^K),
\]
and a multiplicity homomorphism
\[m^K_\bullet :R(\CH)\to Fun[\widehat K, \bZ],\quad [V]\mapsto m^K_V.
\]
Here $R(\CH)$ denotes the ordinary Grothendieck group of admissible $\CH$-modules, and $Fun[~]$ denotes the set of functions.
\end{definition}
As in \cite{ALTV}, a better way to define the signature homomorphism is by introducing the signature Grothendieck ring $\bW$ of hermitian finite dimensional $\bC$-vector spaces. This is the hermitian Grothendieck group of finite dimensional $\bC$-vector spaces together with the tensor product. The unit is $1=[\bC,\langle~,~\rangle_{std}]$, where $\langle~,~\rangle_{std}$ is the standard positive definite form, and if we set $s=[\bC,-\langle~,~\rangle_{std}]$, then
\[\bW=\bZ[s]/\<s^2-1\>.
\]
Consider the tensor product of $\CH$-modules with finite dimensional vector spaces (endowed with the trivial $\CH$-action). This can also be extended in the obvious way to define a structure of a $\bW$-module on $R(\CH)^\kappa$. It is not hard to see (\cite[Proposition 15.10]{ALTV}) that $R(\CH)^\kappa$ is generated by

\begin{enumerate}
\item $[L,\<~,~\>_L]$, where $L$ ranges over the irreducible hermitian $\CH$-modules ($L\cong L^{h,\kappa}$) with a fixed choice of nondegenerate invariant form $\<~,~\>_L$;
\item $[L'\oplus {L'}^{h,\kappa},\<~,~\>_{hyp}]$, where $L'\not\cong{L'}^{h,\kappa}$ range over the unordered pairs of irreducible non-hermitian modules. Clearly,
\[s [L'\oplus {L'}^{h,\kappa},\<~,~\>_{hyp}]=[L'\oplus {L'}^{h,\kappa},\<~,~\>_{hyp}] \text{ in } R(\CH)^\kappa.
\]
\end{enumerate}
The following result expresses the hermitian form of an admissible module in terms of the hermitian forms of its composition factors.
\begin{proposition}[{\cite[Proposition 15.10]{ALTV}}]\label{p:herm-Groth} Every admissible hermitian module $V$ can be written uniquely in $R(\CH)^\kappa$ as
\[[V,\<~,~\>_V]=\sum_{L\cong L^{h,\kappa}} w(L,V) [L,\<~,~\>_L]+\sum_{L'\not\cong {L'}^{h,\kappa}} m(L',V) [L'\oplus {L'}^{h,\kappa},\<~,~\>_{hyp}], 
\]
where 
\begin{equation*}
\begin{aligned}
m(L',V)&=\text{multiplicity of }L'\text{ in }V\text{ as a composition factor},\\
w(L,V)&=p(L,V)+q(L,V) s\in\bW,\ p(L,V),q(L,V)\ge 0, \\ 
m(L,V)&=p(L,V)+q(L,V).
\end{aligned}
\end{equation*}
In particular, for every compact open subgroup $K$, the $K$-signature character of $V$ is
\begin{equation}
\sg_V^K=\sum_{L\cong L^{h,\kappa}} w(L,V) \sg_L^K+(1+s)\sum_{L'\not\cong {L'}^{h,\kappa}} m(L',V) m^K_{L'}.
\end{equation}
\end{proposition}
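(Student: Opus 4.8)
Since $R(\CH)^\kappa$ is already known to be generated by the modules in families~(1) and~(2) (the first half of \cite[Proposition 15.10]{ALTV}, which is proved by induction on composition length using the single defining relation), what remains is the \emph{uniqueness} of the expansion, with the stated properties of $p(L,V),q(L,V),m(L',V)$; the displayed formula for $\sg^K_V$ is then a mechanical substitution. My plan is: (1) produce a forgetful homomorphism $R(\CH)^\kappa\to R(\CH)$ and read off the composition multiplicities $m(L',V)$ and the sums $p(L,V)+q(L,V)$; (2) use the signature homomorphism $\sg^K_\bullet$ to read off the differences $p(L,V)-q(L,V)$; (3) apply $\sg^K_\bullet$ to the now-unique expansion.

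\emph{Step 1.} I would check that $[V,\<~,~\>_V]\mapsto[V]$ descends to a homomorphism $\varphi\colon R(\CH)^\kappa\to R(\CH)$. Let $U$ be a submodule of a nondegenerate hermitian module $V$ of finite length and $R=U\cap U^\perp$ the radical of $\<~,~\>_V|_U$; nondegeneracy gives $U^\perp\cong(V/U)^{h,\kappa}$ and $V/(U+U^\perp)\cong R^{h,\kappa}$, so a short Grothendieck-group computation turns the defining relation into the genuine identity $[V]=[U/R]+[U^\perp/R]+[R]+[R^{h,\kappa}]$ in $R(\CH)$, which is precisely the image of that relation under $\varphi$ (as $[R\oplus R^{h,\kappa},\<~,~\>_{hyp}]\mapsto[R]+[R^{h,\kappa}]$). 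Since admissible modules have finite length, $R(\CH)$ is free abelian on $\Irr$; applying $\varphi$ to the asserted expansion, using $[L,\pm\<~,~\>_L]\mapsto[L]$ and $[L'\oplus{L'}^{h,\kappa},\<~,~\>_{hyp}]\mapsto[L']+[{L'}^{h,\kappa}]$, and comparing coefficients of irreducibles shows that $m(L',V)$ and $p(L,V)+q(L,V)$ must equal the corresponding composition multiplicities of $V$; with $p(L,V),q(L,V)\ge0$ this determines $\{p(L,V),q(L,V)\}$ as an unordered pair.

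\emph{Step 2.} Compose $\sg^K_\bullet$ with $(p,q)\mapsto p-q$ to get, varying $K$, a homomorphism $\Theta\colon R(\CH)^\kappa\to\prod_K Fun[\widehat K,\bZ]$. It annihilates every generator in family~(2) because $p^K=q^K$ on hyperbolic modules, and $\Theta(s\cdot x)=-\Theta(x)$; hence the expansion gives $\Theta([V])=\sum_{L\cong L^{h,\kappa}}(p(L,V)-q(L,V))\,\Theta([L,\<~,~\>_L])$, so it is enough to know that the signature characters $\Theta([L,\<~,~\>_L])$ ($L\cong L^{h,\kappa}$ irreducible) are $\bZ$-linearly independent. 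I would get this from the congruence $p^K_L(\mu)-q^K_L(\mu)\equiv m^K_L(\mu)\pmod 2$ together with $\bF_2$-linear independence of the ordinary $K$-multiplicity characters $m^K_L$: inside a fixed Bernstein block there are only finitely many irreducibles, and choosing lowest $K$-types one obtains a multiplicity matrix that is unitriangular over $\bZ$, hence invertible modulo $2$. Then any integral relation among the $\Theta([L,\<~,~\>_L])$ forces all coefficients into $2\bZ$, and dividing by $2$ and iterating forces them to vanish; together with Step~1 this proves uniqueness.

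\emph{Step 3.} Substituting the unique expansion into $\sg^K_\bullet$ (which is $\bW$-linear once $s$ is taken to act on $\bZ^2$-valued functions by swapping $p^K$ and $q^K$) gives $\sg^K_V=\sum_L w(L,V)\,\sg^K_L+\sum_{L'}m(L',V)\,\sg^K_{L'\oplus{L'}^{h,\kappa}}$, and since $p^K_{L'\oplus{L'}^{h,\kappa}}=q^K_{L'\oplus{L'}^{h,\kappa}}=m^K_{L'}$ (recorded just before the Definition) the second sum equals $(1+s)\sum_{L'}m(L',V)\,m^K_{L'}$, which is the claimed identity. I expect the one real obstacle to be the $\bZ$-linear independence of the signature characters in Step~2 — equivalently, the freeness of $R(\CH)^\kappa$ over $\bZ$ on the generators of families~(1) and~(2); everything else is bookkeeping with the single defining relation. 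If one prefers to avoid the modulo-$2$ / unitriangularity input, this independence may instead be quoted directly from \cite{ALTV}.
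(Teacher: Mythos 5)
First, a remark on the comparison: the paper offers no proof of this proposition at all --- it is quoted verbatim from \cite[Proposition 15.10]{ALTV} --- so your argument has to stand on its own. Your Step 1 is fine (the forgetful map $R(\CH)^\kappa\to R(\CH)$ is well defined and pins down $m(L',V)$ and $p(L,V)+q(L,V)$), and Step 3 is indeed bookkeeping. The genuine gap is Step 2. The linear independence you need is simply false for $p$-adic groups: restriction to compact open subgroups cannot distinguish unramified twists. The paper itself uses the identity $i_P^G(\sigma)|_K=i_P^G(\sigma\otimes\chi)|_K$ for all compact $K$, and concretely a supercuspidal $\sigma$ of $G$ and a twist $\sigma\otimes\chi$ by a nontrivial unitary unramified character (or two irreducible unitary unramified principal series at different real parameters) are non-isomorphic, hermitian, in fact unitary, and have identical multiplicity characters $m^K_\bullet$ for every compact open $K$ --- hence identical signature characters, since for unitary modules $p^K-q^K=m^K$. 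So neither the $\bF_2$-independence of the $m^K_L$ nor the $\bZ$-independence of the functions $p^K_L-q^K_L$ holds, and your $2$-adic descent never gets started; note that the failure already occurs among the composition factors of a single hermitian module, e.g.\ $V=\sigma\oplus(\sigma\otimes\chi)$, so restricting attention to the finitely many $L$ occurring in a given comparison does not help. (The auxiliary claim that a Bernstein block contains only finitely many irreducibles is also false --- blocks are parametrized by positive-dimensional varieties; only the fiber of a fixed infinitesimal character is finite.) This is not an incidental obstacle: it is precisely the phenomenon that forces the paper to pass to the rigid quotient $\overline R_c(G)=R(G)_\bQ/R_\diff(G)_\bQ$ and to invoke the rigid trace Paley--Wiener theorem, because $K$-multiplicity data only sees a class modulo $R_\diff(G)$.

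Consequently, uniqueness cannot be proved by separating the generators with $K$-signature characters; it must be established formally inside $R(\CH)^\kappa$ (an induction on length intrinsic to the defining relation, as in \cite{ALTV}), or quoted wholesale, which is what the paper does. Your fallback of ``quoting the independence from \cite{ALTV}'' does not rescue the argument, since the analogous independence fails for real groups as well ($K$-type multiplicities of a principal series do not depend on the continuous parameter); what can be quoted is the proposition itself, not your Step 2. A smaller point: the nonnegativity $p(L,V),q(L,V)\ge 0$, which is part of the statement, is nowhere established in your argument --- your Steps 1--2 at best determine $p+q$ and $p-q$ once an expansion of the stated shape is known to exist, and the nonnegativity comes out of the inductive existence construction rather than from the generation statement you cite.
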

In this notation, a nondegenerate hermitian form $\<~,\>$ on $V$ is positive definite fi the coefficient of $s$ in the signature is zero.

\subsection{} Fix a minimal parabolic subgroup $\C P_0=\C M_0 \C N_0$ defined over $F$. Let $\C A_0\subset \C M_0$ be a maximal split torus over $F$. We will denote by $P_0$ the $F$-points of $\C P_0$, by $M_0$ the $F$-points of $\C M_0$ etc. As in \cite[\S XI.1]{BW}, a standard pair $(P,A)$ is defined by a standard parabolic $\C P\supset \C P_0$, $\C P=\C M\C N$, and a maximal $F$-split torus in the center of $\C M$ such that $\C A\subset \C A_0$. Denote $X(\C A)_F=\Hom_F[\C A,\mathbb G_m]$ and set
\[\fa^*=X(\C A)_F\otimes_{\bZ}\bR,\ \fa^*_c=\fa^*\otimes_\bR \bC, \quad \fa=\Hom_{\bZ}[X(\C A),\bR].
\]
For a standard pair $(P,A)$, we regard $\fa^*$ as a real subspace of $\fa_0^*$ and similarly $\fa\subset \fa_0$. Let $H:M\to \fa$ be the Harish-Chandra map \cite[\S XI.1.13]{BW}. Choose an inner product $(~,~)$ on $\fa_0^*$ which is invariant under the action of the Weyl group of $A_0$ in $G$. Let $\Phi(\C P,\C A)$ denote the root system of $(\C P,\C A)$. Define
\[\fa^{*,+}=\{\nu\in\fa\mid (\nu,\alpha)>0\text{ for all }\alpha\in \Phi(\C P,\C A)\}.
\]
Let $\{\alpha_i\}$ be the set of simple roots in $\Phi(\C P,\C A)$ and let $\{\omega_i\}$ be a basis $(~,~)$-dual to  $\{\alpha_i\}$ in the span of $\{\alpha_i\}$. Define a partial order $\le$ on $\fa^*$, by setting $\nu\le\lambda$ whenever $(\lambda-\nu,\omega_i)\ge 0$ for all $i$.

\smallskip

For a standard parabolic subgroup  $P=MN$ of $G$, denote by $i_P^G$ the functor of normalized parabolic induction and by $r_P^G$ the normalized Jacquet functor (the normalized functor of $N$-coinvariants). Let $M^0$ be the subgroup generated by all compact subgroups of $M$, an open normal subgroup of $M$. Let $\delta_P$ be the modulus function. Denote by $R(G)$ the Grothendieck group of admissible $G$-representations. We may regard $i_P^G$ as a map $i_P^G:R(M)\to R(G)$, independent of $P\supset M$, and therefore it may be denoted by $i_M^G$ in this situation. 

If $(\sigma,U_\sigma)$ is an admissible $M$-representation, and $\nu\in\fa^*_c$, define the induced representation:
\begin{equation}\label{ind-rep}
\begin{aligned}
&I(P,\sigma,\nu)=i_P(\sigma\otimes\nu)\\
&=\{f:G\to U_\sigma\mid f \text{ loc. const.},\ f(mng)=\delta_P^{1/2}(m)\sigma(m) q^{\nu(H(m))} f(g),\ mn\in P\},
\end{aligned}
\end{equation}
with the $G$-action given by right translations.

A Langlands datum is a triple $(P,\sigma,\nu)$, where $(P,A)$ is a standard pair, $P=MN$, $\sigma$ is an irreducible tempered representation of $M^0$, and $\nu\in\fa^{*,+}$. The following theorem is the Langlands classification in the $p$-adic case, and summarizes \cite[Proposition 2.6, Theorems 2.10 and 2.11, Lemma 2.13]{BW} and \cite[Theorem 4.1]{Si}, cf. \cite[Theorem 5.1]{BM}.

\begin{theorem}\label{t:Langlands}
 Let $(P,\sigma,\nu)$ be a Langlands datum and $\bar P=M\bar N$ the opposite parabolic to $P$.

\begin{enumerate}
\item The integral operator
\begin{equation}
j(\nu): I(P,\sigma,\nu)\to I(\bar P,\sigma,\nu),\quad j(\nu)(f)(g)=\int_{\bar N} f(\bar n g) ~d\bar n,
\end{equation}
converges absolutely and uniformly in $g$ on compacta, and it is an intertwining operator with respect to the $G$-actions.
\item $J(P,\sigma,\nu)=j(\nu)(I(P,\sigma,\nu))=I(P,\sigma,\nu)/\ker j(\nu)$ is the unique irreducible quotient of $I(P,\sigma,\nu)$, called the Langlands quotient.
\item Let $(P',\sigma',\nu')$ be another Langlands datum. If $J(P,\sigma,\nu)\cong J(P',\sigma',\nu')$, then $P=P'$, $\sigma=\sigma'$, and $\nu=\nu'$.
\item Let $\pi$ be an irreducible smooth $G$-representation. Then there exists a Langlands datum $(P',\sigma',\nu')$ such that $\pi\cong J(P',\sigma',\nu')$. The parameter $\nu'\in \fa^{*,+}$ is called the Langlands parameter of $\pi$ and it is denoted by $\lambda_\pi$. 
\item Suppose $\tau$ is an irreducible composition factor of $I(P,\sigma,\nu)$ and $\pi=J(P,\sigma,\nu)$. Then $\lambda_\tau\le \lambda_\pi$, and equality occurs if and only if $\tau=\pi$.
\end{enumerate}

\end{theorem}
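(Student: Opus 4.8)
The statement above is the $p$-adic Langlands classification, and rather than proving it \emph{ab initio} I would assemble it from the theory of intertwining integrals and Jacquet modules, organised into the four blocks (1)--(2), (3), (4) and (5). For (1)--(2), the plan is to first establish convergence of the integral defining $j(\nu)$ for $\nu\in\fa^{*,+}$ from the standard decay estimates for $f$ restricted to $\bar N$; these reduce, along each simple root, to a rank-one computation controlled by the strict positivity $(\nu,\omega_i)>0$, and uniform convergence on compacta yields a $G$-intertwining map. For the Langlands quotient I would pass to Jacquet modules: one shows that $r_{\bar P}^G I(P,\sigma,\nu)$ contains the exponent $\delta_{\bar P}^{1/2}\sigma\otimes q^{\nu}$ with multiplicity one, that this is the unique $\le$-maximal exponent of $I(P,\sigma,\nu)$ along any standard parabolic, and that $j(\nu)$ is nonzero on the corresponding line. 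Any proper submodule then has Jacquet module missing this top exponent and so lies in $\ker j(\nu)$; hence $\ker j(\nu)$ is the unique maximal proper submodule and $J(P,\sigma,\nu)$ is the unique irreducible quotient.

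For (3), I would recover the datum from $J=J(P,\sigma,\nu)$ by reading off $\nu$ as the leading exponent of $J$, i.e.\ the $\le$-maximum among the $A$-exponents occurring in $r_P^G J$; this pins down the split torus $A$, hence $M$ and $P$, and then $\sigma$ is determined as the tempered constituent of $M^0$ carrying that exponent, temperedness making the choice unambiguous. For (4), the plan is: by Casselman's subrepresentation theorem every irreducible $\pi$ embeds in $i_P^G(\tau)$ for some standard parabolic $P=MN$ and some supercuspidal $\tau$ of $M$; regrouping so that the restriction of the inducing data to $M^0$ is tempered and the central exponent $\nu$ lies in the open chamber (enlarging $M$ to absorb any component of $\nu$ lying on a wall) produces a genuine Langlands datum, and dualising the embedding exhibits $\pi$ as the quotient $J(P,\sigma,\nu)$.

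For (5), I would argue that the $\bar P$-exponents of any composition factor $\tau$ of $I(P,\sigma,\nu)$ lie among the $w$-translates of the central exponent, $w$ ranging over the relevant Weyl coset, and that the Langlands combinatorial lemma on the chamber gives $w\nu\le\nu$ with equality only for $w=1$. Since $\lambda_\tau$ is by definition the leading exponent of $\tau$, this forces $\lambda_\tau\le\nu=\lambda_\pi$, with equality only when $\tau$ contains the top exponent, i.e.\ $\tau=J(P,\sigma,\nu)=\pi$.

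The hard part will be the analytic input: convergence of $j(\nu)$ on $\fa^{*,+}$ and, for a clean proof of (2), its rational continuation and functional equation in the Bernstein parameter, together with the precise Langlands combinatorial lemma on the chamber that underlies both the uniqueness in (3) and the inequality in (5). In practice, since these ingredients are available in \cite{BW}, \cite{Si}, and \cite{BM}, the real labour is bookkeeping: reconciling the normalisations of parabolic induction and of the Harish-Chandra map $H$ across those references so that $\nu\in\fa^{*,+}$ is genuinely the leading exponent throughout.
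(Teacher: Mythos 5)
The paper does not prove this theorem at all: it is quoted as a summary of \cite{BW} (Proposition 2.6, Theorems 2.10, 2.11, Lemma 2.13) and \cite{Si}, cf.\ \cite{BM}, and your outline is precisely the standard argument of those references (convergence of the intertwining integral on the positive chamber, leading exponents of Jacquet modules, Casselman's theorem, and the Langlands combinatorial lemma). So your proposal is consistent with, and essentially identical in approach to, the source the paper relies on; no discrepancy to report.
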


\subsection{} From now on, we restrict to the case of hermitian representations with respect to the natural star operation
$\kappa(f)=f^*.$ The classification of $*$-hermitian irreducible smooth $G$-representations is well known. The $p$-adic case, see for example \cite[\S 5]{BM}, is a direct analogue of the results of Knapp and Zuckermann for $(\fg,K)$-modules of real reductive groups \cite[Chapter XVI]{Kn}.

Let $(P,\sigma,\nu)$ be a Langlands datum. The Langlands quotient $J(P,\sigma,\nu)$ admits a nondegenerate $*$-hermitian form if and only if there exists $w\in W(G,A)$, the Weyl group of $A$ in $G$, such that
\begin{equation}
w(M)=M,\ ^w\sigma\cong \sigma \text{ and } w(\nu)=-\nu.
\end{equation}
Here $w(M)=\dot w^{-1} M \dot w$ for some (fixed) lift $\dot w$ of $w$ in the normalizer $N_G(A)$, and $^w\sigma(m)=\sigma(\dot w m \dot w^{-1})$. Suppose this is the case. Fix an isomorphism
\[a_{\sigma,w}: \sigma\to~ ^w\sigma,
\]
which induces an isomorphism, still denoted $a_{\sigma,w}$ between $I(P,\sigma,-\nu)$ and $I(\bar P,\sigma,\nu)$.
We emphasize that $a_{\sigma,w}$ does not depend on $\nu$. Let $\<~,~\>_\sigma$ be a positive definite $*_M$-invariant hermitian form on $U_\sigma$. Define a possible degenerate $*$-invariant hermitian form on $I(P,\sigma,\nu)$ by
\begin{equation}
\<f_1,f_2\>_{P,\sigma,\nu,w}=\int_G\<f_1(g), a_{\sigma,w}^{-1} (j(\nu)(f_2)(g))\>_\sigma ~dg.
\end{equation}
The radical of $\<~,~\>_{P,\sigma,\nu,w}$ is $\ker j(\nu)$. Taking the quotient by the radical induces an nondegenerate $*$-invariant hermitian form on $J(P,\sigma,\nu)$. Since $J(P,\sigma,\nu)$ is irreducible, we know that this form is unique up to scalar multiple.

\subsection{}\label{s:inf-char}
We recall the definitions of the Bernstein center and the inertial equivalence class.  Let $M$ be a Levi subgroup of $G$ with the subgroup $M^0$ as before. It is known that $M/M^0$ is a free abelian group of finite rank. A character $\chi:M\to \bC^\times$ is called unramified if $\chi(M^0)=1$. Write $\CX(M)$ for the group of unramified characters of $L$.

Consider the set of pairs $(M,\sigma)$, where $M$ is a Levi subgroup and $\sigma$ is a supercuspidal representation of $M$. Define the relation of equivalence such pairs: $(M_1,\sigma_1)\equiv (M_2,\sigma_2)$ if there exists $x\in G$ such that $M_2=~^x M_1$ and $\sigma_2\cong ~^x\sigma_1$. Denote by $\Theta(G)$ the set of equivalence classes. A point $\theta\in \Theta(G)$ is called an infinitesimal character of $G$.

\smallskip

Define the weaker relation of inertial equivalence on the same pairs: $(M_1,\sigma_1)\sim (M_2,\sigma_2)$ if there exists $x\in G$ such that $M_2=~^x M_1$ and $\sigma_2\cong ~^x\sigma_1\otimes\chi$ for some $\chi\in \CX(L_2)$.  Denote by $\CB(G)$ the set of inertial equivalence classes $[M,\sigma]$. Each $\fk s=[M,\sigma]\in \CB(G)$ defines a connected component of $\Theta(G)$ which will be denoted $\Theta(G)_{\fk s}$. It has the structure of a complex affine algebraic variety as the quotient of $\CX(M)$ by a finite group $W_{\fk s}$. The decomposition 
\begin{equation}\label{theta-decomp}
\Theta(G)=\cup_{\fk s\in\CB(G)}\Theta(G)_{\fk s},
\end{equation}
makes $\Theta(G)$ into a complex algebraic variety with infinitely many connected components.
\medskip

As it is well known, if $(\pi,V)\in\Irr G$, there exists a unique up to equivalence cuspidal pair $(L,\sigma)$ such that $\pi$ is a subquotient of $i_P^G(\sigma).$ Then $(L,\sigma)$ defines a point $\theta\in \Theta(G)$, which is called the infinitesimal character of $\pi$. The infinitesimal character map
\begin{equation}
\text{inf-char}: \Irr G\to \Theta(G)
\end{equation}
is onto and finite to one. 

The Bernstein center $\mathfrak Z(G)$ is the algebra of regular functions on $\Theta(G)$. From (\ref{theta-decomp}), one decomposed $\mathfrak Z(G)=\prod_{\fk s} \mathfrak Z(\fk s)$, where $\mathfrak Z(\fk s)$ is the algebra of regular functions on the Bernstein component $\fk s$. Let $\mathfrak Z^0(G)=\bigoplus_{\mathfrak s} \mathfrak Z(\fk s)$ be the ideal of $\mathfrak Z(G)$ of functions supported on a finite number of components. Then each infinitesimal character $\theta\in\Theta(G)$ can be identified with an algebra homomorphism $\theta:\mathfrak Z(G)\to \bC$ which is nontrivial on $\mathfrak Z^0(G)$. With this identification, on each irreducible smooth representation $(\pi,V)$, $z\in \mathfrak Z(G)$ acts by (see \cite[\S2.13]{BD})
\[ \text{inf-char}(\pi)(z)\cdot \Id_V.
\]
For the Levi subgroup $M$ of a standard parabolic subgroup $P$, let $\Theta(M)$ denote the corresponding variety of infinitesimal characters. Let $i_{M}^G:\Theta(M)\to \Theta(G)$ denote the corresponding finite morphism on algebraic varieties, see \cite[\S2.4]{BDK}.

\subsection{}We call the inertial class $\fk s=[L,\sigma]$ the inertial support of $\pi$. Let $\C C^{\fk s}(G)$ denote the full subcategory of $\C C(G)$ consisting of representations $\pi$ with the property that every subquotient of $\pi$ has inertial support $\mathfrak s$. Then 
\begin{equation}
\C C(G)=\prod_{\fk s\in \CB(G)}\C C^{\fk s}(G),
\end{equation}
as abelian categories. If $e\in\CH$ is a special idempotent, then the results of \cite[3.12]{BK} (see also \cite[3.6]{BHK}) say that there exists a subset $\C S(e)\subset \CB(G)$ such that
\begin{equation}
\C C_e(G)=\prod_{\fk s\in \C S(e)} \C C^{\fk s}(G).
\end{equation}
Conversely, it is shown in {\it loc. cit.} that for every $\mathfrak s\in \CB(G)$, there exists a special, self-adjoint idempotent $e$ such that $\C C_e(G)=\C C^{\fk s}(G)$.

\subsection{}Let $K$ be a compact open subgroup. The goal is to express the $K$-signature of an irreducible smooth $G$-representation in terms of that of tempered modules. A first step is the simpler result for $K$-multiplicity functions.

\begin{lemma}\label{l:K-char} Let $(\pi,V)$ be an irreducible smooth $G$-representation in $\C C^{\fk s}(G)$. There exist irreducible tempered $G$-representations $\{V_i\mid 1\le i\le n\}$ in $\C C^{\fk s}(G)$ and integers $a_i$ such that for every $K$,
\[m_V^k=\sum_{i=1}^n a_i m_{V_i}^K.
\]
\end{lemma}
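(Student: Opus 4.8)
The plan is to use the Langlands classification (Theorem \ref{t:Langlands}) together with an induction on the Langlands parameter to reduce an arbitrary irreducible $V$ to tempered pieces, all while staying inside the fixed Bernstein component $\C C^{\fk s}(G)$. First I would write $V = J(P,\sigma,\nu)$ for a Langlands datum $(P,\sigma,\nu)$ with $P = MN$. By definition $J(P,\sigma,\nu)$ is the irreducible quotient of the standard module $I(P,\sigma,\nu)$, so in the Grothendieck group $R(G)$ we have
\[
[I(P,\sigma,\nu)] = [V] + \sum_{\tau} n_\tau [\tau],
\]
where $\tau$ runs over the other composition factors of $I(P,\sigma,\nu)$, each satisfying $\lambda_\tau < \lambda_\pi$ strictly by Theorem \ref{t:Langlands}(5). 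Since $K$-multiplicity is additive on short exact sequences, this gives
\[
m_V^K = m_{I(P,\sigma,\nu)}^K - \sum_\tau n_\tau\, m_\tau^K
\]
for every $K$ simultaneously.

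Next I would handle the two kinds of terms on the right. The composition factors $\tau$ all lie in $\C C^{\fk s}(G)$ (inertial support is preserved under passing to subquotients), and they have strictly smaller Langlands parameter, so by induction on $\lambda$ (the set of Langlands parameters $\le \lambda_\pi$ being finite, or at least well-founded in the order $\le$ restricted to what actually occurs) each $m_\tau^K$ is an integer combination of $m_{V_i}^K$ for tempered $V_i \in \C C^{\fk s}(G)$. For the leading term $m_{I(P,\sigma,\nu)}^K$, note that normalized parabolic induction commutes with restriction to the compact open subgroup $K$ in the sense that $K$-multiplicities of $I(P,\sigma,\nu)$ depend only on $[i_M^G \sigma'] \in R(G)$ where $\sigma'$ is any irreducible tempered representation of $M^0$ extending $\sigma$ — more precisely, $I(P,\sigma,\nu)$ and $I(P,\sigma,0)$ have the same restriction to any compact open subgroup, since twisting by the unramified quasicharacter $q^{\nu(H(-))}$ does not change the underlying space or the action of compact elements (all compact elements of $M$ lie in $M^0 \subset \ker(q^{\nu(H(-))})$, and $\delta_P$ is likewise trivial on compact subgroups). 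Hence $m_{I(P,\sigma,\nu)}^K = m_{I(P,\sigma,0)}^K$, and $I(P,\sigma,0) = i_P^G(\sigma)$ with $\sigma$ tempered on $M^0$ — but I actually want a tempered representation of $G$. Inducing a tempered representation of $M$ (equivalently, an irreducible constituent of $i_{M^0}^M\sigma$, which is tempered on $M$) parabolically yields a representation whose composition factors are all tempered by the Langlands/Harish-Chandra theory of the tempered spectrum; and these constituents again lie in $\C C^{\fk s}(G)$ since inertial support is governed by the supercuspidal support, which is unchanged. So $m_{i_P^G(\sigma)}^K$ is itself a sum $\sum_i b_i m_{W_i}^K$ with $W_i$ tempered irreducible in $\C C^{\fk s}(G)$.

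Combining, $m_V^K = \sum_i a_i m_{V_i}^K$ with all $V_i$ tempered irreducible in $\C C^{\fk s}(G)$ and integers $a_i$, with the crucial point that the identity holds for \emph{every} $K$ at once because each step (additivity, the induction hypothesis, and the equality $m_{I(P,\sigma,\nu)}^K = m_{I(P,\sigma,0)}^K$) is valid uniformly in $K$. The main obstacle, and the step I would be most careful about, is the claim that $I(P,\sigma,\nu)$ and $I(P,\sigma,0)$ have identical $K$-multiplicity functions: this requires knowing that the isomorphism class of the restriction of a parabolically induced representation to a compact open subgroup is insensitive to the unramified twist, which follows from the second adjunction / geometric description of $i_P^G$ via functions on $G$ (the space in \eqref{ind-rep} is literally the same for all $\nu$ as a $K$-module, since $K$-translation only involves the $g$-variable while the cocycle $\delta_P^{1/2}(m) q^{\nu(H(m))}$ lives in the $M$-variable), but one should state this cleanly. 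Everything else is bookkeeping with the Langlands classification and the finiteness of the poset of Langlands parameters appearing below $\lambda_\pi$.
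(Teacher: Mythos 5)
Your argument is essentially the paper's proof: induct on the Langlands parameter, use additivity of $K$-multiplicities over the composition series of the standard module $I(P,\sigma,\nu)$, the equality $m^K_{I(P,\sigma,\nu)}=m^K_{I(P,\sigma,0)}$ (the underlying $K$-module being literally independent of $\nu$), and the decomposition of the tempered module $I(P,\sigma,0)$ into irreducible tempered constituents lying in $\C C^{\fk s}(G)$. The one point you leave vague --- why the induction terminates --- is settled in the paper by observing that every representation occurring has the same infinitesimal character as $\pi$, of which there are only finitely many irreducibles, so one can order them compatibly with $\le$ on Langlands parameters and induct over that finite set.
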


\begin{proof}
Set $\theta=\text{inf-char}(\pi)\in\Theta(G)$. There are finitely many irreducible $G$-representations, all in $\C C^{\fk s}(G)$, with infinitesimal character $\theta$. Write them in terms of the Langlands classification as $J(P,\sigma,\nu)$ and fix an ordering on this set compatible with $\le$ on the Langlands parameters $\nu$.

Suppose $\pi_1=J(P_1,\sigma_1,\nu_1)$ is a minimal element in this order. By Theorem \ref{t:Langlands}(5), we must have $J(P_1,\sigma_1,\nu_1)=I(P_1,\sigma_1,\nu_1)$, hence $m_{\pi_1}^K=m_{I(P_1,\sigma_1,\nu_1)}^K=m_{I(P_1,\sigma_1,0)}^K$, and $I(P_1,\sigma_1,0)$ is tempered and in $\C C^{\fk s}(G)$. If $I(P_1,\sigma_1,0)$ is reducible then it decomposes into a direct sum of finitely many irreducible tempered representations, hence the claim is proved in the base case of the induction.

Now let $V=J(P,\sigma,\nu)$ be arbitrary. If $J(P,\sigma,\nu)=I(P,\sigma,\nu)$, the claim follows in the same way as before. Otherwise, let $J_i=J(P_i,\sigma_i,\nu_i)$ be the composition factors of $I(P,\sigma,\nu)$ other than $J(P,\sigma,\nu)$. Then
\[m_V^K=m_{I(P,\sigma,\nu)}^K-\sum_i m(J_i,I(P,\sigma,\nu)) ~m_{J_i}^K.
\]
Since $m_{I(P,\sigma,\nu)}^K=m_{I(P,\sigma,0)}^K$ and $\lambda_{J_i}<\lambda_V$, the proof is complete by induction.
\end{proof}

\subsection{} We summarize the necessary results about the Jantzen filtration and the signature character, see \cite[\S3]{Vo}, \cite[\S14]{ALTV}, also \cite[\S5]{BM} for the $p$-adic analogue.

Let $(P,\sigma,\nu)$ be a Langlands datum such that $J(P,\sigma,\nu)$ admits a nondegenerate $*$-invariant hermitian form. Regard the standard module $I(P,\sigma,\nu)$ as a member of the continuous family $I_t=I(P,\sigma, t\nu)$, $t\in \bR_{\ge 0}$. As it is well known, all of the representations in the family $I_t$ can be realized on the same space $X$. Explicitly, let $K_0$ be a maximal compact open subgroup such that the Iwasawa decomposition holds $G=PK_0$. Then 
\begin{equation}
X=\{f:K_0\to U_\sigma\mid f\text{ loc. const.},\ f(mnk)=\sigma(m) f(k),\ mn\in P\cap K_0,\  k\in K_0\}.
\end{equation} 
It is important to notice that for every compact subgroup $K$, the restriction of $I_t$ to $K$ is independent of $t$.

The hermitian forms $\<~,~\>_{P,\sigma,t\nu,w}$ on $I_t$ can be regarded as an analytic family of hermitian forms $\<~,~\>_t$ on $X$ such that 
\[J(P,\sigma,t\nu)=I(P,\sigma,t\nu)/\ker\<~,~\>_t,\text{ for }t>0.
\]
The Jantzen filtration of $I_1$ ($t=1$, obviously the same definition applies to an arbitrary point $t_0>0$) is:
\begin{equation}
X=X^0\supset X^1\supset X^2\subset\dots\supset X^N=\{0\},
\end{equation}
where $X^n$ consists of the vectors $x\in X$ such that there exists a an analytic function 
\[ f_x:(1-\epsilon,1+\epsilon)\to X,
\]
satisfying the conditions:
\begin{enumerate}
\item[(a)] $f_x$ takes values in a fixed finite dimensional space of $X$;
\item[(b)] $f_x(1)=x$;
\item[(c)] for all $y\in X$, the function $t\mapsto \<f_x(t),y\>_t$ vanishes at least to order $n$ at $t=1$. 
\end{enumerate}
Define a hermitian form $\<~,~\>^n$ on $X^n$ by
\begin{equation}
\<x,x'\>^n=\lim_{t\to 1}\frac 1{(t-1)^n}\<f_x(t), f_{x'}(t)\>_t.
\end{equation}
The main result is the following theorem.

\begin{theorem}[{\cite[Theorem 3.8]{Vo}}]\label{t:Jantzen}
With the notation as above, the Jantzen filtration $X=X^0\supset X^1\supset X^2\supset\dots\supset X^N=\{0\}$ is a filtration of $G$-representations.
\begin{enumerate}
\item[(a)] $X^0/X^1$ is the Langlands quotient $J_1$.
\item[(b)] The radical of the form $\<~,~\>^n$ is $\<~,~\>^{n+1}$. The nondegenerate form $\<~,~\>^n$ on $X^n/X^{n+1}$ is $*$-invariant. Write $\sg_1^{K,n}=(p_n,q_n): \widehat K\to \bN\times \bN$ for its $K$-signature.
\item[(c)] For small $\epsilon$, the nondegenerate invariant hermitian form $\<~,~\>_{1+\epsilon}$ on $I_{1+\epsilon}$ has signature
\[\left(\sum_n p_n,\sum_n q_n\right).
\] 
\item[(d)] For small $\epsilon$, the nondegenerate invariant hermitian form $\<~,~\>_{1-\epsilon}$ on $I_{1-\epsilon}$ has signature 
\[\left(\sum_{n \text{ even}} p_n+\sum_{n \text{ odd}} q_n,\sum_{n \text{ odd}} p_n+\sum_{n \text{ even}} q_n\right).
\]
\end{enumerate}
Equivalently,
\begin{equation}\label{e:cross}
\sg_{1+\epsilon}^K=\sg_{1-\epsilon}^K+(1-s)\sum_{n\text{ odd}} \sg_1^{K,n},
\end{equation}
where $\sg_{t}$ denotes the $K$-signature of $\<~,~\>_t$.
\end{theorem}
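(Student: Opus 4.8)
The plan is to reduce the entire statement to an elementary fact about analytic families of hermitian forms on a finite-dimensional space, following Vogan \cite{Vo} (compare \cite[\S14]{ALTV}, and \cite[\S5]{BM} for the $p$-adic set-up). The feature that makes this reduction possible is the one emphasised just above the theorem: for every compact open subgroup $K$ the restriction of $I_t$ to $K$ is independent of $t$. So I would fix such a $K$, let $X^K\subset X$ be the (finite-dimensional) space of $K$-fixed vectors, and observe that the forms $\langle~,~\rangle_t$ restrict to a family $B^K_t$ of hermitian forms on the \emph{fixed} space $X^K$; this family is analytic in $t$ near $1$ — here one uses the explicit realisation of all the $I_t$ on the single space $X$ together with the analyticity in $t$ of the $G$-action and of the intertwining integral $j(t\nu)$ — and after shrinking $\epsilon$ it is nondegenerate for $0<|t-1|<\epsilon$, since $\ker\langle~,~\rangle_t=\ker j(t\nu)$ for $t>0$ and $I(P,\sigma,t\nu)$ has only finitely many reducibility points near $t=1$. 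A direct unravelling of the definitions then shows that $X^n\cap X^K$ is the $n$-th member of the Jantzen filtration of the finite-dimensional pair $(X^K,B^K_\bullet)$, that $\langle~,~\rangle^n$ restricts on it to the corresponding finite-dimensional form, and that $X^n=\bigcup_K (X^n\cap X^K)$; so every assertion of the theorem except the identification of $X^0/X^1$ in (a) becomes the union over $K$ of a statement in finite dimensions.

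Next I would prove the finite-dimensional Jantzen lemma: for an analytic family $B_t$ of hermitian forms on a finite-dimensional complex space $Y$, nondegenerate for $0<|t-1|<\epsilon$, analytic perturbation theory (Rellich's theorem, applied to $B_t$ viewed as self-adjoint relative to a fixed positive-definite form) produces an analytically varying basis $v_1(t),\dots,v_m(t)$ of $Y$ and real-analytic functions $\lambda_i(t)$ with $B_t(v_i(t),v_j(t))=\delta_{ij}\lambda_i(t)$. Writing $\lambda_i(t)=(t-1)^{n_i}u_i(t)$ with $u_i(1)\in\bR^\times$, one reads off the whole conclusion: $Y^n=\operatorname{span}\{v_i(1):n_i\ge n\}$, the induced form $\langle~,~\rangle^n$ on $Y^n/Y^{n+1}$ is diagonal with entries $u_i(1)$ over the indices with $n_i=n$, so its radical is $Y^{n+1}$ and its signature is $(p_n,q_n)=(\#\{i:n_i=n,\ u_i(1)>0\},\ \#\{i:n_i=n,\ u_i(1)<0\})$; and since $\operatorname{sign}\lambda_i(1\pm\epsilon)=(\pm1)^{n_i}\operatorname{sign}u_i(1)$, the signatures of $B_{1+\epsilon}$ and $B_{1-\epsilon}$ are exactly those in (c), (d), i.e. the reformulation (\ref{e:cross}). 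That the $Y^n$ just defined coincide with the definition through analytic curves $f_x$, and that $\langle~,~\rangle^n$ is independent of the chosen $f_x$, both come from expanding $f_x(t)=\sum_i c_i(t)v_i(t)$ in this basis and comparing vanishing orders.

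It then remains to check that each $X^n$ is a $G$-submodule (so the filtration is one of $G$-representations), that the forms $\langle~,~\rangle^n$ are $*$-invariant, and to prove (a). For the first two: given $x\in X^n$ with witness $f_x$ and $g\in G$, the curve $t\mapsto I_t(g)f_x(t)$ is again analytic and, by the explicit formula for the $G$-action on $X$, takes values in a fixed finite-dimensional subspace; by $*$-invariance of $\langle~,~\rangle_t$ it satisfies $\langle I_t(g)f_x(t),y\rangle_t=\langle f_x(t),I_t(g^{-1})y\rangle_t$, and expanding $I_t(g^{-1})y$ in a local analytic basis shows this vanishes to order $\ge n$ for all $y$, so $I_1(g)x\in X^n$ and $\langle~,~\rangle^n$ descends as a $*$-invariant form. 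For (a): $X^0=X$ trivially, and by (b) the radical of $\langle~,~\rangle^0$ is $X^1$; but $\langle~,~\rangle^0=\langle~,~\rangle_1=\langle~,~\rangle_{P,\sigma,\nu,w}$, whose radical is $\ker j(\nu)$, and $I(P,\sigma,\nu)/\ker j(\nu)=J(P,\sigma,\nu)=J_1$ by Theorem \ref{t:Langlands}(2); hence $X^0/X^1\cong J_1$. Finiteness of the filtration is then automatic: $X^1=\ker j(\nu)$ is a submodule of the finite-length representation $I(P,\sigma,\nu)$, and each $X^n/X^{n+1}$ is either zero or a nonzero subquotient.

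The main obstacle is the genuinely analytic step in the second paragraph — the analytic diagonalisation of the family $B^K_t$ and, just as importantly, the verification that this family really is analytic in $t$ (which is where the uniform realisation of the $I_t$ on $X$ and the $t$-independence of $I_t|_K$ are essential). Everything else is bookkeeping once that is in hand, and this part goes through exactly as in the archimedean case treated in \cite{Vo,ALTV}.
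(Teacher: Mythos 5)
The paper does not actually prove this statement --- it is quoted from Vogan \cite[Theorem 3.8]{Vo} (with \cite[\S 14]{ALTV} and \cite[\S 5]{BM} for the $p$-adic adaptation) --- and your argument is exactly the standard proof from those sources: reduce to finite-dimensional analytic families of hermitian forms on $t$-independent, $K$-stable finite-dimensional pieces, diagonalise analytically \`a la Rellich to read off (b)--(d), check $G$-stability of $X^n$ and $*$-invariance of $\<~,~\>^n$ via the witness curves, and identify $X^1=\ker j(\nu)$ to get (a); this is correct. The only point to keep straight is that $\sg^K$ is a function on all of $\widehat K$, so the finite-dimensional reduction should be carried out on the $\mu$-isotypic components $X(\mu)$ (equivalently on fixed vectors of smaller compact open subgroups, using orthogonality of distinct isotypic components under the invariant form) rather than only on $K$-fixed vectors, which is routine bookkeeping and does not affect the argument.
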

Before we get to the signature theorem, we need a lemma which will insure that the inductive algorithm terminates (see also \cite[Lemma 2.12]{Sol} for similar considerations).

\begin{lemma}\label{l:terminate}
Let $\fk s$ be a Bernstein component. There exists a positive real number $d_{\fk s}$ (depending only on $\fk s$) with the following property: if $(P,\sigma,\nu)$ is a Langlands datum and $V=J(P,\sigma,\nu)$ is the Langlands quotient, for every other irreducible composition factor $V'\neq V$ of $I(P,\sigma,\nu)$ we have $||\lambda_V-\lambda_{V'}||>d_{\fk s}$. Here the norm $||~||$ is taken with respect to $(~,~)$, the inner product on $\fk a_0^*$.
\end{lemma}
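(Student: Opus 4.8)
The statement asserts a uniform spectral gap, within a fixed Bernstein component $\fk s$, between the Langlands parameter of $J(P,\sigma,\nu)$ and the Langlands parameter of any other composition factor of $I(P,\sigma,\nu)$. The key observations are: (i) all composition factors of $I(P,\sigma,\nu)$ share the same infinitesimal character $\theta = i_M^G(\text{inf-char}(\sigma\otimes\nu))\in\Theta(G)_{\fk s}$; (ii) by Theorem~\ref{t:Langlands}(5), $\lambda_{V'}\le\lambda_V$ with strict inequality for $V'\ne V$, so $\lambda_V-\lambda_{V'}$ is a nonzero vector; (iii) the set of Langlands parameters that can occur for irreducible representations with a \emph{given} infinitesimal character $\theta$ is finite, hence the minimum of $\|\lambda_V-\lambda_{V'}\|$ over the (finitely many) pairs with that infinitesimal character is a strictly positive number $d_\theta$; (iv) as $\theta$ ranges over all of $\Theta(G)_{\fk s}$, one must check that $\inf_\theta d_\theta > 0$, i.e. that the gap does not degenerate along the component.

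\textbf{Carrying it out.} First I would fix $\fk s=[L,\sigma_0]$ and recall that $\Theta(G)_{\fk s}$ is the quotient $\CX(L)/W_{\fk s}$, a complex affine variety, and that every irreducible representation with infinitesimal character in $\Theta(G)_{\fk s}$ is a Langlands subquotient $J(P,\sigma,\nu)$ where $M\supseteq L$, $\sigma$ is a tempered representation of $M^0$ built from $\sigma_0$ twisted by an unramified character, and $\nu\in\fa^{*,+}$. For a standard pair $(P,A)$ and a fixed tempered $\sigma$ supported on $\fk s$, the set of $\nu\in\fa^{*,+}$ for which $I(P,\sigma,\nu)$ is reducible is cut out by a finite set of (real) hyperplane/root conditions that depend only on the inertial class, not on the continuous parameter — this is the standard fact (via the intertwining operators $j(\nu)$ and Harish-Chandra's $\mu$-function, or via the Plancherel/$R$-group formalism) that reducibility points lie on reflection hyperplanes for the relevant finite reflection group $W_{\fk s}$. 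The parameter $\lambda_{V'}$ of a composition factor is obtained from $\nu$ by projecting onto a face of the chamber determined by which walls of $\nu$ are ``crossed,'' and the shifts involved are governed by the finitely many roots appearing for $\fk s$. Concretely: there is a finite set $\Sigma_{\fk s}$ of nonzero vectors in $\fa_0^*$ (coming from the roots of the Levis $M\supseteq L$ and the finite groups $W_{\fk s}$) such that $\lambda_V-\lambda_{V'}$ always lies in the lattice or rational cone generated by $\Sigma_{\fk s}$, and being nonzero (by Theorem~\ref{t:Langlands}(5)) it has norm bounded below by $\min\{\|\xi\|:\xi\in\Sigma_{\fk s},\ \xi\ne 0\}$ — or more safely, by a positive constant depending only on $\Sigma_{\fk s}$ and the inner product $(~,~)$. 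Setting $d_{\fk s}$ to be (half of) this constant gives the claim.

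\textbf{Main obstacle.} The delicate point is precisely the uniformity in the continuous parameter: a priori, as $\nu$ varies the two parameters $\lambda_V$ and $\lambda_{V'}$ both move, and one needs that their difference cannot be made arbitrarily small. I would handle this by pinning down that $\lambda_V=\nu$ (the parameter of the Langlands \emph{quotient} is $\nu$ itself) and that $\lambda_{V'}$ equals the projection of $\nu$ to some proper face $\fa_{P'}^{*,+}$ with $P'\supsetneq P$ dictated by a nonempty set of walls; then $\lambda_V-\lambda_{V'}$ lies in the cone spanned by the simple roots $\alpha_i$ defining those walls and, crucially, the ``amount'' $\nu$ must be pushed to reach a reducibility wall is bounded below — the reducibility hyperplanes for a fixed $\fk s$ form a finite, parameter-independent arrangement (a consequence of the rationality of the poles of the intertwining operators, cf. the analyticity in Theorem~\ref{t:Langlands}(1) and standard results of Silberger/Harish-Chandra), so $\nu$ and its projection differ by a vector whose norm is bounded below by the minimal distance between parallel sheets of that arrangement. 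Alternatively, and perhaps more cleanly, one can phrase this via the finite morphism $i_M^G:\Theta(M)\to\Theta(G)$ and note that the ``jump'' in Langlands parameter corresponds to a nontrivial element of a fixed finite Weyl group acting on a fixed lattice, forcing a discrete (hence bounded-below) displacement. I would present whichever of these two formulations the earlier sections make most directly available; the crux either way is the finiteness/parameter-independence of the reducibility arrangement attached to $\fk s$.
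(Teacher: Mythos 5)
Your opening reductions (common infinitesimal character, strict inequality $\lambda_{V'}<\lambda_V$, finiteness of the possible parameters for a \emph{fixed} infinitesimal character) match the setup of the paper, and you correctly isolate the real issue: uniformity of the gap as the infinitesimal character varies over $\Theta(G)_{\fk s}$. But the step that is supposed to produce the constant $d_{\fk s}$ is never actually carried out, and none of the three mechanisms you offer for it works as stated. A nonzero vector lying in a rational cone spanned by a finite set $\Sigma_{\fk s}$ can have arbitrarily small norm, so the bound $\min\{\|\xi\|:\xi\in\Sigma_{\fk s}\}$ does not follow; no lattice containing $\lambda_V-\lambda_{V'}$ is exhibited (the parameters $\nu$ vary continuously in $\fa_0^*$); and the fallback that a nontrivial element of a finite Weyl group ``forces a discrete displacement'' is false, since a reflection moves points near its fixed hyperplane by arbitrarily little. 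Likewise, the distance from $\nu$ to the reducibility walls is not linked by any argument to $\|\lambda_V-\lambda_{V'}\|$. Moreover, the description of $\lambda_{V'}$ as the orthogonal projection of $\nu$ onto a face is asserted without proof; what one can say is that $\lambda_{V'}$ is a projection of a Weyl translate of the full real cuspidal exponent of $I(P,\sigma,\nu)$, which includes the real exponents of the discrete-series support of the tempered data — exactly the contribution your argument never tracks, and exactly where the finiteness that yields $d_{\fk s}$ has to come from.

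The paper closes this gap by a different mechanism, which you would essentially have to reproduce to repair the proof. By Silberger's theorem, $\sigma$ (and $\sigma'$) is a direct summand of a unitary induction of a discrete series $\sigma_0$ (resp.\ $\sigma_0'$) twisted by a unitary unramified character, and by \cite[Proposition 3.1]{BDK} there are only finitely many discrete infinitesimal characters in the component $\fk s$ modulo unramified (hence, for discrete series, unitary unramified) twists. Equating the infinitesimal characters of $V$ and $V'$ and using that $\nu,\nu'$ are \emph{real} unramified parameters then shows that the possible differences $\lambda_V-\lambda_{V'}$ are governed by a finite set of data depending only on $\fk s$, which is what bounds them away from $0$ uniformly. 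Your ``finite, parameter-independent reducibility arrangement'' is morally a shadow of this finiteness, but as presented it is neither proved nor correctly connected to the quantity $\|\lambda_V-\lambda_{V'}\|$, so the proposal has a genuine gap at the central quantitative step.
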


\begin{proof}
Suppose $V'=J(P',\sigma',\nu')$ in the Langlands classification. Since $V$ and $V'$ are both composition factors of $I(P,\sigma,\nu)$, we have $\text{inf-char}(V)=\text{inf-char}(V')$. On the other hand, by \cite[Theorem 2.8.1]{Si2}, there exists an irreducible discrete series representation $\sigma_0$ of a Levi subgroup $S\subset M$ and a unitary unramified character $\chi_0$ of the center of $S$ such that $\sigma$ is a direct summand of the unitarily induced representation $i_S^M(\sigma_0\otimes\chi_0)$. Similarly, we have $\sigma_0'$, $\chi_0'$ for $\sigma$.

This means that 
\begin{equation}\label{e:cc}
i_S^G(\text{inf-char}(\sigma_0))\chi_0 q^\nu=\text{inf-char}(V)=\text{inf-char}(V')=i_{S'}^G(\text{inf-char}(\sigma_0'))\chi_0' q^{\nu'},
\end{equation} 
 and $\nu'<\nu$. By \cite[Proposition 3.1]{BDK}, in every Bernstein component there are only finitely many discrete infinitesimal characters (modulo the action of the unramified characters). The infinitesimal characters of irreducible discrete series are discrete, hence there are only  finitely many orbits of infinitesimal characters
\begin{equation}
\{i_S^G(\text{inf-char}(\sigma_0))\mid S\text{ Levi of }G,\ \sigma_0\text{ discrete series of }S\}\cap \Theta_{\fk s},
\end{equation}
under the action of the unitary unramified characters. Since $\nu,\nu'$ are in $\fa_0^*$ (i.e., they are real unramified characters), the claim follows from (\ref{e:cc}).
\end{proof}

We can now state the analogue of Vogan's signature theorem, cf. \cite[Theorem 1.5]{Vo}, \cite[Theorem 5.2]{BM}. 
\begin{theorem}[The signature theorem]\label{t:signature} Let $(\pi,V)$ be an irreducible smooth $G$-representation with inertial support $\fk s$. Suppose that $V$ admits a $*$-invariant Hermitian form. Then there exist irreducible tempered representations $V_1,\dots,V_n$ in $\C C^{\fk s}(G)$ and $w_1,\dots,w_n\in\bW$ such that, for every compact open subgroup $K$, the $K$-signature of $V$ is
\[\sg_V^K=\sum_{i=1}^n w_i \sg_{V_i}^K.
\]
\end{theorem}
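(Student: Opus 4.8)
The plan is to induct on the Langlands parameter $\lambda_V$, using the deformation of the inducing character to $0$ as the mechanism and Theorem~\ref{t:Jantzen} to account for the jumps in signature. Writing $V=J(P,\sigma,\nu)$ in the Langlands classification, at each stage of the recursion we will replace $V$ by certain admissible $*$-hermitian $G$-modules all of whose composition factors have Langlands parameter strictly below $\nu=\lambda_V$; combined with the compatibility of the Langlands order with the norm on $\fa_0^*$, Lemma~\ref{l:terminate} then yields a uniform lower bound $d_{\fk s}$ on the decrease of $\|\lambda\|$ at each such step, so the recursion has bounded depth. The base case is $\lambda_V=0$, i.e.\ $V$ tempered, where the statement is the tautology $\sg_V^K=\sg_V^K$.

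For the inductive step, realize the standard module $I(P,\sigma,\nu)$ inside the analytic family $I_t=I(P,\sigma,t\nu)$, $t\in[0,1]$, on the common underlying space $X$, equipped with the analytic family of $*$-invariant forms $\<~,~\>_t$ with $\<~,~\>_1=\<~,~\>_{P,\sigma,\nu,w}$. Since the restriction of $I_t$ to any compact open $K$ is independent of $t$, the signature $\sg_t^K$ is constant on each of the finitely many subintervals of $(0,1]$ cut out by the reducibility points $0<t_1<\dots<t_{m-1}<1$ of the family, and on $(0,t_1)$ it equals the $K$-signature of a $*$-invariant form on the tempered module $I_0=I(P,\sigma,0)$, so $\sg_{0^+}^K=\sum_k w_k\,\sg_{T_k}^K$ for the irreducible tempered constituents $T_k$ of $I_0$ and suitable $w_k\in\{1,s\}$. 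Telescoping the crossing formula~(\ref{e:cross}) across $t_1,\dots,t_{m-1}$ gives
\[
\sg_{1-\epsilon}^K=\sum_k w_k\,\sg_{T_k}^K+(1-s)\sum_{j=1}^{m-1}\ \sum_{n\text{ odd}}\ \sg_{t_j}^{K,n},
\]
while Theorem~\ref{t:Jantzen}(d), applied with $t_0=1$, gives $\sg_{1-\epsilon}^K=\sum_{n\ge 0}s^n\,\sg_1^{K,n}$; since $X^0/X^1=J(P,\sigma,\nu)=V$ by Theorem~\ref{t:Jantzen}(a), we have $\sg_1^{K,0}=\sg_V^K$, and therefore
\[
\sg_V^K=\sum_k w_k\,\sg_{T_k}^K+(1-s)\sum_{j=1}^{m-1}\ \sum_{n\text{ odd}}\ \sg_{t_j}^{K,n}-\sum_{n\ge 1}s^n\,\sg_1^{K,n}.
\]

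It remains to handle the terms $\sg_{t_j}^{K,n}$ ($n$ odd) and $\sg_1^{K,n}$ ($n\ge1$). Each is the $K$-signature of the admissible $*$-hermitian $G$-module $X^n/X^{n+1}$ (with the nondegenerate form $\<~,~\>^n$ of Theorem~\ref{t:Jantzen}(b)) attached to the Jantzen filtration of $I(P,\sigma,t_j\nu)$, respectively of $I(P,\sigma,\nu)$. By Theorem~\ref{t:Jantzen}(a) this module does not contain the corresponding Langlands quotient as a subquotient, so by Theorem~\ref{t:Langlands}(5) every composition factor $L$ of it has $\lambda_L<\lambda_V$. Applying Proposition~\ref{p:herm-Groth} to $X^n/X^{n+1}$ expresses its $K$-signature through the $\sg_L^K$ for its hermitian composition factors $L$ and the $K$-multiplicities $m_{L'}^K$ of its non-hermitian composition factors $L'$; the inductive hypothesis applies to each $\sg_L^K$ (as $\lambda_L<\lambda_V$), and, since tempered representations are unitary so that $\sg_T^K=m_T^K$, Lemma~\ref{l:K-char} rewrites each $m_{L'}^K$ as a $\bZ$-combination of $\sg_T^K$ for irreducible tempered $T$. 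All composition factors occurring, and all tempered representations produced, lie in $\C C^{\fk s}(G)$, since every $I(P,\sigma,t\nu)$ has inertial support $\fk s$. Thus each $\sg_{t_j}^{K,n}$ and $\sg_1^{K,n}$ ($n\ge1$) is a $\bW$-linear combination of $K$-signatures of irreducible tempered representations in $\C C^{\fk s}(G)$, with the representations and coefficients independent of $K$; substituting into the displayed identity completes the induction.

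The step I expect to be the main obstacle is guaranteeing that this recursion terminates: a priori nothing stops the Jantzen subquotients at the successive deformation points from generating an infinite descending sequence of Langlands parameters. This is precisely what Lemma~\ref{l:terminate} is designed to exclude — the uniform gap $d_{\fk s}$ forces $\|\lambda\|^2$ to decrease by a fixed positive amount at each step, making the induction well-founded. A secondary point, standard in the $p$-adic setting but not purely formal, is the finiteness of the set of reducibility points of the family $I_t$ on the compact interval $[0,1]$, together with the fact that $\<~,~\>_t$ is a $*$-invariant form on the tempered module $I_0$ in the limit $t\to0^+$.
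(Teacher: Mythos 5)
Your proposal is correct and follows essentially the same route as the paper: deform along $I_t=I(P,\sigma,t\nu)$, telescope the crossing formula of Theorem \ref{t:Jantzen} across the reducibility points, reduce the Jantzen-layer terms via Proposition \ref{p:herm-Groth} and Lemma \ref{l:K-char}, and close the induction on $\|\lambda_V\|$ using the uniform gap $d_{\fk s}$ of Lemma \ref{l:terminate}. Your displayed identity is just an algebraically equivalent rewriting of the paper's formula (\ref{e:induction}), so no substantive difference remains.
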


\begin{proof}The proof is the same as in {\it loc. cit.}, except that we need to explain in our setting the independence of $K$, the Bernstein component $\fk s$, and the fact that the algorithm terminates.

Write $V$ in terms of the Langlands classification as $V=J(P,\sigma,\nu)$. As before, consider $I_t=I(P,\sigma,t\nu)$, $t\in [0,1]$. Let $t_1<\dots<t_{r_1}$ be the points in $(0,1)$ where $I_t$ is reducible, and set $t_0=0$ and $t_r=1$. For $1\le j\le r$ define
\[\sg^{K}_j=\text{ the $K$-signature of }\<~,~\>_t,\ t\in(t_{j-1},t_j).
\]
For $1\le j\le r$, the induced representation $I_{t_j}$ has the Jantzen filtration
\[X_{t_j}=X^0_{t_j}\supset X_{t_j}^1\supset \dots.
\]
The subquotient $X_{t_j}^n/X_{t_j}^{n+1}$ has the nondegenerate form $\<~,~\>_n^j$ with signature $\sg^{K,n}_j$. By Theorem \ref{t:Jantzen}, we have the recursion formulas for $\sg^K_j$:
\begin{equation}
\begin{aligned}
&\sg^K_{j+1}=\sum_n \sg^{K,n}_j,\\
&\sg^K_{j+1}=\sg^K_{j}+(1-s)\sum_{n\text{ odd}}\sg_j^{K,n}.
\end{aligned}
\end{equation}
In particular, the signature of $V$ is:
\begin{equation}
\sg^K_V=\sg_r^0=\sg^K_r+(1-s)\sum_{n\text{ odd}}\sg_r^{K,n}-\sum_{m>0}\sg_r^{K,m}.
\end{equation}
As in \cite[3.38]{Vo}, this leads to
\begin{equation}\label{e:induction}
\sg^K_V=\sg^K_1+(1-s)\sum_{j=1}^{r}\sum_{n\text{ odd}}\sg^{K,n}_j-\sum_{m>0}\sg_r^{K,m}.
\end{equation}
Notice first that in the right hand side of this formula, the signature of $J_{t_j}$ does not appear for any $j\ge 1.$ The signature $\sg^K_1$ is the same as the signature of $I(P,\sigma,0)$, which is a possibly reducible tempered module, and its signature can be written uniquely in terms of the signatures of its irreducible tempered composition factors. For the rest of the terms, we want to proceed by induction on the length of the Langlands parameter $\nu=\lambda_V$. 

One remarks, as in \cite[Lemma 3.1]{Vo} that if the conclusion of Theorem \ref{t:signature} holds for all the irreducible hermitian composition factors of an admissible module $Y$ which have a nondegenerate hermitian form, then it holds for $Y$ itself. This is because of Proposition \ref{p:herm-Groth} and Lemma \ref{l:K-char}. In particular, we may apply this observation to our situation, namely to the (possibly reducible) subquotients $X_{t_j}^n/X_{t_j}^{n+1}$ in the Jantzen filtrations, as long as by induction we may assume that the claim holds for the irreducible composition factors of $I_{t_j}$, other than $J_{t_j}$.

Let $V'$ be an irreducible hermitian composition factor of $I(P,\sigma,t_j\nu)$ for some $1\le j\le r$, such that $V'\neq J(P,\sigma,t_j\nu)$. These are the irreducible representations whose signature contribute in the right hand side of (\ref{e:induction}). By Lemma \ref{l:terminate},  the Langlands parameter of $V'$ satisfies $||\lambda_{V'}||<||t\nu_j||-d_{\fk s}\le ||\lambda_V||-d_{\fk s}<||\lambda_V||.$ The induction hypothesis applies then and therefore the conclusion of the theorem holds for $V$ as well, by (\ref{e:induction}). Notice that the algorithm terminates since every time the length of the Langlands parameter drops by at least $d_{\fk s}>0$.

\end{proof} 

\begin{corollary}
With the same notation as in Theorem \ref{t:signature}:
\[\sg_V^K=\sum_{i=1}^n w_i m_{V_i}^K.
\]
\end{corollary}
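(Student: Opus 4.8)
The corollary asserts that in the signature formula from Theorem~\ref{t:signature}, we may replace the tempered signature characters $\sg_{V_i}^K$ by the tempered multiplicity characters $m_{V_i}^K$. The plan is to reduce this to the statement that for an \emph{irreducible tempered} representation $V_i$, the natural $*$-invariant hermitian form is definite, so that $\sg_{V_i}^K = m_{V_i}^K$ (or $-m_{V_i}^K$, which can be absorbed into $w_i$). This is the $p$-adic analogue of the fact that tempered unitary representations contribute their multiplicity, not a mixed signature.

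First I would recall that every irreducible tempered $V_i$ is unitary: it carries a positive-definite $G$-invariant inner product. Hence its unique (up to scalar) $*$-invariant hermitian form is, up to sign, this inner product, so on each $K$-isotypic multiplicity space $V_{i,\mu}$ the induced form $\langle~,~\rangle^\mu_{V_i}$ is definite; thus $\sg_{V_i}^K = (m_{V_i}^K, 0)$ or $(0, m_{V_i}^K)$, i.e. $\sg_{V_i}^K = \pm\, m_{V_i}^K$ in the $\bW$-module notation (where we view $m_{V_i}^K$ as the element $p + q s$ with either $q=0$ or $p=0$). Then in the formula $\sg_V^K = \sum_i w_i \sg_{V_i}^K$ from Theorem~\ref{t:signature}, I would replace $w_i \sg_{V_i}^K$ by $(\pm w_i)\, m_{V_i}^K$ and rename $\pm w_i$ as a new element $w_i \in \bW$; the conclusion follows with the same list $V_1,\dots,V_n$ and modified coefficients.

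One subtlety to address: in Proposition~\ref{p:herm-Groth} the tempered pieces that appear are of two kinds --- genuinely hermitian irreducibles $L \cong L^{h,\kappa}$, for which the above definiteness argument applies directly, and hyperbolic pairs $L' \oplus {L'}^{h,\kappa}$ with $L' \not\cong {L'}^{h,\kappa}$, whose signature is already $(1+s)\, m^K_{L'}$ by the remark preceding Proposition~\ref{p:herm-Groth}. In the first case $\sg_L^K = \pm\, m_L^K$ as above; in the second case $(1+s)\,m^K_{L'}$ is visibly $w\cdot m^K_{L'}$ for $w = 1+s \in \bW$. So in all cases $\sg_{V_i}^K$ is a $\bW$-multiple of $m_{V_i}^K$, and the substitution goes through uniformly. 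The main (and essentially only) obstacle is the clean statement that tempered irreducibles are unitary and hence have definite invariant form; this is standard (Harish-Chandra / Waldspurger theory of tempered representations, or one may invoke that tempered representations appear in the Plancherel decomposition on $L^2(G)$), and once granted the rest is bookkeeping in the ring $\bW = \bZ[s]/\langle s^2 - 1\rangle$.

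Finally I would note that the passage from Theorem~\ref{t:signature} to this corollary does not require any new information about $K$ or the Bernstein component $\fk s$: the tempered representations $V_i$ are already fixed by Theorem~\ref{t:signature}, they already lie in $\C C^{\fk s}(G)$, and the replacement $\sg_{V_i}^K \rightsquigarrow m_{V_i}^K$ is simultaneous over all compact open $K$ because the sign ($p$ versus $q$) of the definite form on $V_{i,\mu}$ depends only on the global choice of $*$-invariant form on $V_i$, not on $\mu$ or $K$.
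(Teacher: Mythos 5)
Your proof is correct and is essentially the paper's own argument: since each irreducible tempered $V_i$ is unitary, its invariant form may be taken positive definite, so $\sg_{V_i}^K=m_{V_i}^K$ (any sign or hyperbolic-pair bookkeeping being absorbed into the coefficients $w_i\in\bW$). The extra discussion of the non-hermitian pairs is harmless but unnecessary, as Theorem \ref{t:signature} already expresses $\sg_V^K$ in terms of irreducible tempered (hence hermitian) $V_i$.
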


\begin{proof}
Since every $V_i$ is tempered, hence $*$-unitary, the signature of $V_i$ is $\sg_{V_i}^K=m_{V_i}^K.$
\end{proof}

\begin{remark}The same formula in Theorem \ref{t:signature} holds with respect to different star operations $\kappa$ as long as for every $\kappa$-hermitian Langlands quotient $J(P,\sigma,\nu)$, we can define a natural (degenerate) $\kappa$-invariant form on $I(P,\sigma,\nu)$ whose radical is $\ker j(\nu)$. In the setting of $(\fg,K)$-modules, an essential role is played by the signature theorem for the compact star operation \cite{ALTV}. We will explore a $p$-adic analogue in future work.
\end{remark}

\section{Rigid tempered representations}\label{s:rigid}
In this section, we explain several relevant results from \cite{BDK}, \cite{Da,Da2}, and \cite{CH2} which will allow us to sharpen the signature theorem \ref{t:signature}. See also \cite{Bez}, \cite{SS}, \cite{Vi} for more details on the relation between the cocenter, $K_0(G)$, and characters of admissible representations.

If $\C A$ is an abelian group and $R$ is a ring, we will denote $\C A_R=\C A\otimes_\bZ R.$

\subsection{}Recall that $R(G)$ denotes the $\bZ$-span of the set of irreducible smooth $G$-representations. 
The cocenter of $G$  (or of $\CH(G)$) is the linear space
\begin{equation}
\overline\CH(G)=\CH(G)/[\CH(G),\CH(G)],
\end{equation}
where $[\CH(G),\CH(G)]$ is the linear subspace spanned by all the commutators. An alternative description of $[\CH(G),\CH(G)]$ is that it is the subspace spanned by all $f-~^xf$, where $f\in\CH(G)$ and $x\in G$, $^xf(g)=f(x^{-1}g x).$ 

A distribution on $G$ is a linear map $D:\CH(G)\to \bC$. Let $\C D(G)$ denote the space of $G$-distributions. A $G$-invariant distribution, i.e., an element of $\C D(G)^G$, is the same as a linear map $D:\overline\CH(G)\to\bC$.

Let $G_c^0$ denote the open set of compact elements of $G$ and let $\CH_c(G)\subset \CH(G)$ be the subalgebra of functions supported in $G_c^0$. Define the compact cocenter:
\begin{equation}
\overline\CH_c(G)=\text{ the image of }\CH_c(G)\text{ in }\overline\CH_c(G).
\end{equation}
In other words, $\overline\CH_c(G)$ is the image in $\overline\CH(G)$ of $\bigoplus_K \CH(K)$, where $K$ ranges over the compact open subgroups of $G$ (equivalently, over the $G$-conjugacy classes of maximal compact open subgroups of $G$). 
The Hecke algebra $\CH(G)$ is the direct limit 
\[\CH(G)=\varinjlim_K \CH(G,K)
\]
of the algebras $\CH(G,K)$ of $K$-biinvariant functions, where $K$ ranges over a system of good compact open subgroups (in the sense of \cite{BD}). For example, one may take the system $\{I_m\}$, given by the $m$-th congruence subgroups of a fixed Iwahori subgroup $I$\footnote{In the notation of the introduction, $I_m$ is the Moy-Prasad filtration subgroup $G_{x,m^+}$ attached to a generic point $x$ in the Bruhat-Tits building.}. 

\subsection{} Define $R_\ind(G)_\bQ\subset R(G)_\bQ$ to be the $\bQ$-span of all induced modules $i_P^G(\sigma)$, $\sigma\in R(L)_\bQ$, where $P\neq G$. 
\begin{definition}[cf. \cite{CH2}]
Let $R_\diff(G)_\bQ\subset R(G)_\bQ$ denote the span of all differences $i_P^G(\sigma)-i_P^G(\sigma\otimes\chi)$, where $P$ ranges over the set of parabolic subgroups, $\sigma\in R(L)$, and $\chi\in \CX(L)$.
Define the compact representations space (quotient):
\begin{equation}
\overline R_c(G)_\bQ=R(G)_\bQ/R_\diff(G)_\bQ.
\end{equation}
\end{definition}
In the case when $G$ is semisimple, this is the same notion as that of the rigid quotient from \cite{CH2}. 

\begin{lemma}
The space $\overline R_c(G)_\bQ$ is spanned by the classes of irreducible tempered $G$-representations.
\end{lemma}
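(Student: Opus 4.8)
The plan is to show that the classes of irreducible tempered representations span $\ovl R_c(G)_\bQ$ by combining the Langlands classification with a Jantzen-style induction, exactly parallel to the signature argument of Theorem~\ref{t:signature} but now working in the Grothendieck group modulo $R_\diff(G)_\bQ$. First I would observe that $\ovl R_c(G)_\bQ$ is spanned by the classes of \emph{all} irreducible smooth $G$-representations, since $R(G)_\bQ$ has such a basis. So it suffices to show that each irreducible $V=J(P,\sigma,\nu)$ (written via Theorem~\ref{t:Langlands}) equals, modulo $R_\diff(G)_\bQ$, a $\bQ$-combination of irreducible tempered classes. The key point is that if $P\neq G$, then in $\ovl R_c(G)_\bQ$ one has the relation $[i_P^G(\sigma\otimes\nu)]=[i_P^G(\sigma\otimes 0)]$, because $i_P^G(\sigma\otimes\nu)-i_P^G(\sigma\otimes\chi_\nu)$ lies in $R_\diff(G)_\bQ$ for the unramified character $\chi_\nu=q^{\nu(H(-))}$; and $i_P^G(\sigma\otimes 0)=i_P^G(\sigma)$ is a tempered module (since $\sigma$ is tempered), hence a $\bZ_{\ge 0}$-combination of irreducible tempered classes.

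The induction then runs on the length $\|\lambda_V\|$ of the Langlands parameter, using Lemma~\ref{l:terminate} to guarantee termination within each Bernstein component. In the base case $V=I(P,\sigma,\nu)=J(P,\sigma,\nu)$ (the minimal Langlands parameters, by Theorem~\ref{t:Langlands}(5)), the argument of the previous paragraph directly gives $[V]=[I(P,\sigma,0)]$ in $\ovl R_c(G)_\bQ$, a combination of tempered classes. For the inductive step, with $V=J(P,\sigma,\nu)$ arbitrary and $P\neq G$, write in $R(G)_\bQ$
\[
[I(P,\sigma,\nu)]=[V]+\sum_i m(J_i,I(P,\sigma,\nu))\,[J_i],
\]
where the $J_i=J(P_i,\sigma_i,\nu_i)$ are the other composition factors; by Theorem~\ref{t:Langlands}(5), $\lambda_{J_i}<\lambda_V$, and since $V$ and each $J_i$ share the infinitesimal character $\theta$, Lemma~\ref{l:terminate} gives $\|\lambda_{J_i}\|<\|\lambda_V\|-d_{\fk s}$. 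Passing to $\ovl R_c(G)_\bQ$, the left side becomes $[I(P,\sigma,0)]$, a tempered combination, and the classes $[J_i]$ are tempered combinations by the inductive hypothesis, so $[V]$ is as well. (The one case not covered is $P=G$, i.e. $V$ already tempered supercuspidal or discrete series, where there is nothing to prove.)

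The main obstacle — really the only subtlety — is verifying that the defining relations of $R_\diff(G)_\bQ$ genuinely let one replace $\nu$ by $0$: one needs that $q^{\nu(H(-))}$ is an unramified character of $M$ in the sense of \S\ref{s:inf-char} (it is, being trivial on $M^0$ since $H$ kills $M^0$), and that the relation $i_P^G(\sigma\otimes\nu)-i_P^G(\sigma)\in R_\diff(G)_\bQ$ is exactly of the allowed form $i_P^G(\tau)-i_P^G(\tau\otimes\chi)$ with $\tau=\sigma\otimes 0$ and $\chi=q^{\nu(H(-))}$ — which it is by inspection of the definition. Everything else is a routine transcription of the Jantzen/Langlands induction already carried out for signatures, with the filtration bookkeeping replaced by the plain identity in the Grothendieck group; in fact one does not even need the Jantzen filtration here, only the composition-series identity and Lemma~\ref{l:terminate}.
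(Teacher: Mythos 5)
Your proposal is correct and follows essentially the same route as the paper: the whole content is the relation $I(P,\sigma,\nu)\equiv I(P,\sigma,0)$ in $\overline R_c(G)_\bQ$ coming from the definition of $R_\diff(G)_\bQ$, combined with the Langlands classification. The paper simply invokes the standard fact that the modules $I(P,\sigma,\nu)$ span $R(G)$, whereas you re-derive that spanning by the unitriangular induction on Langlands parameters (your appeal to Lemma \ref{l:terminate} is not even needed there, since only finitely many irreducibles share a given infinitesimal character); this is extra detail, not a different argument.
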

\begin{proof}
By the Langlands classification, $R(G)$ is spanned by the Langlands standard modules $I(P,\sigma,\nu)$, where $(P,\sigma,\nu)$ are Langlands date. It is clear that in $\overline R_\rig(G)_\bQ$, we have $I(P,\sigma,\nu)\equiv I(P,\sigma,0)$, the latter being a tempered representation of $G$.
\end{proof}

The elliptic representation space is
\[\overline R_\el(G)=R(G)/R_\ind(G).
\]
The space $R_\el(G)_\bQ$ is also spanned by the images of the irreducible tempered modules (again by invoking the Langlands classification), and there is a natural surjection $\overline R_\rig(G)_\bQ\to \overline R_\el(G)_\bQ$. Every irreducible discrete series representation gives a nonzero class in $R_\el(G)_\bQ$, see for example \cite{Ka2}.

\subsection{}Let $K_0(G)$ denote the Grothendieck group of finitely generated projective $G$-representations. Let
\begin{equation}
\tau: K_0(G)\to \overline\CH(G)
\end{equation}
denote the Hattori-Stallings trace map, see for example \cite[\S2.2]{Da2}.
The following result is essentially the abstract Selberg principle for $p$-adic groups proved in \cite{BB}.
\begin{theorem}[{see \cite[Theorem 1.6]{Da}}]\label{t:HS}The image of $K_0(G)_\bC$ under $\tau$ is $\overline\CH_c(G)$. 
\end{theorem}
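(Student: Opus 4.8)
The statement breaks into two inclusions, and the plan is to treat them separately.

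\emph{The inclusion $\overline\CH_c(G)\subseteq\tau(K_0(G)_\bC)$.} I would deduce this from two standard facts: first, for any idempotent $e\in\CH(G)$ the left module $\CH(G)e$ is finitely generated projective (choosing a compact open subgroup $K'$ under which $e$ is bi-invariant, one has $ee_{K'}=e_{K'}e=e$, so $\CH(G)e$ is a direct summand of $\CH(G)e_{K'}$); second, $\tau([\CH(G)e])=[e]\in\overline\CH(G)$. Now fix a compact open subgroup $K$. The subalgebra $\CH(K)\subset\CH(G)$ of functions supported on $K$ is finite-dimensional and semisimple, so $\CH(K)/[\CH(K),\CH(K)]$ is spanned by the classes of its minimal central idempotents, which are precisely the idempotents $e_\rho$ of \eqref{e:K-idempotent} for $\rho\in\widehat K$; since $[\CH(K),\CH(K)]\subseteq[\CH(G),\CH(G)]$, the image of $\CH(K)$ in $\overline\CH(G)$ is spanned by the classes $[e_\rho]$. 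As $\overline\CH_c(G)$ is by construction the image of $\bigoplus_K\CH(K)$ in $\overline\CH(G)$, it is spanned by the elements $[e_\rho]=\tau([\CH(G)e_\rho])$, hence lies in $\tau(K_0(G)_\bC)$.

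\emph{The inclusion $\tau(K_0(G)_\bC)\subseteq\overline\CH_c(G)$.} This is the substantive point: it is exactly the abstract Selberg principle of Blanc and Brylinski \cite{BB}, revisited by Dat \cite[Theorem 1.6]{Da}, to the effect that the Hattori--Stallings trace of a finitely generated projective $\CH(G)$-module is represented, modulo $[\CH(G),\CH(G)]$, by a function supported on the set $G_c^0$ of compact elements. The mechanism I would invoke is the simplicial chain complex $C_\bullet$ of the contractible Bruhat--Tits building of $G$, which resolves the trivial representation and whose terms $C_i$ are finite sums of representations compactly induced from the (compact open) stabilizers of facets, twisted by orientation characters. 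Tensoring a finitely generated projective $P$ with this resolution and using the projection formula, one expresses $\tau([P])$ as an alternating sum of Hattori--Stallings traces of modules compactly induced from compact open subgroups, and each such trace is visibly supported on the compact elements of $G$; hence $\tau([P])\in\overline\CH_c(G)$.

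The hard part is this second inclusion; the first is essentially bookkeeping. Within the second inclusion, the delicate points are that the modules in play (both the induced pieces $C_i$ and a finitely generated projective $P$ itself) need not be admissible, and that the building resolution is finite and $G$-finitely-generated only when $G$ acts cocompactly on its building---which fails as soon as the split part of the center of $G$ is nontrivial---so the central split torus must be isolated and treated separately. Making this precise is exactly what requires the cyclic/Hochschild-homology localisation arguments of \cite{BB} or the analysis of $K_0(G)$ and the cocenter in \cite{Da}; accordingly the statement is used here as a citation rather than reproved.
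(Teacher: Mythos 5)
The paper gives no independent argument for this statement---it is quoted verbatim from Dat \cite[Theorem 1.6]{Da} (resting on the abstract Selberg principle of \cite{BB})---and since your proposal likewise defers the substantive inclusion $\tau(K_0(G)_\bC)\subseteq\overline\CH_c(G)$ to exactly that citation, you are taking essentially the same route as the paper. Your bookkeeping for the easy inclusion via the idempotents $e_\rho$ and the identity $\tau([\CH(G)e])=[e]$ is correct (modulo the harmless slip that $\CH(K)$ is not finite dimensional but only a directed union of the finite-dimensional semisimple algebras $\bC[K/K']$, which suffices for the claim about its image in the cocenter), and your outline of the building-resolution mechanism is an accurate description of what happens inside the cited proofs.
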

More is known about $\tau$, for example, it is also proved in the \cite{Da} that
\begin{enumerate}
\item[(a)] The map $\tau$ is injective, and 
\item[(b)] The space $K_0(G)_\bQ$ is generated over $\bQ$ by compactly induced modules
\[\cind_K^G(\mu)=\CH(G)\otimes_{\CH(K)}E_\mu,
\]
where $K$ ranges over the compact open subgroups of $G$ and $(\mu,E_\mu)$ over $\widehat K$.
\end{enumerate}
We will not need these finer results.

\subsection{}As in \cite[\S1.2]{BDK}, a linear map $F:R(G)\to \bC$ is called a good form if
\begin{enumerate}
\item[(i)] For every standard Levi subgroup $M<G$, and every irreducible smooth $M$-representation $\sigma$, the function $\psi\mapsto i_M^G(\sigma\otimes\psi)$ is a regular function on the complex algebraic variety $\C X(M)$.
\item[(ii)] There exists a compact open subgroup $K$ such that $f(V)=0$ for all admissible representations $(\pi,V)$ such that $V^K=0$.
\end{enumerate}
Denote by $\C F(R(G))$ the space of linear maps $R(G)\to\bC$ and by $\C F(R(G))_\good\subset \C F(R(G))$ the space of good forms on $R(G)$. 
The following is the classical trace Paley-Wiener theorem.
\begin{theorem}[{\cite[Theorem 1.2]{BDK}}]
The trace map $\tr: \overline \CH(G)\to \C F(R(G))$ given by $f\mapsto (\pi\mapsto \tr\pi(f))$ is surjective onto $\C F(G)_\good$. 
\end{theorem}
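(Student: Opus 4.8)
The plan is to establish the equality of the image of $\tr$ with $\C F(R(G))_\good$ by proving the two inclusions. The inclusion $\subseteq$ is routine and also justifies the descent to $\overline\CH(G)$, since $\tr\pi([f_1,f_2])=0$ for all $f_i\in\CH(G)$. For it, fix $f\in\CH(G)$ and a compact open subgroup $K$ with $f\in\CH(G,K)$. If $(\pi,V)$ is admissible with $V^K=0$, then $\pi(f)$ factors through the projector onto $V^K$, so $\tr\pi(f)=0$, which is condition (ii). For condition (i), realize the whole family $i_M^G(\sigma\otimes\psi)$, $\psi\in\CX(M)$, on the single space of functions on a fixed special maximal compact $K_0$ as in Section \ref{s:signature}; restricted to a finite-dimensional space $X^K$, the operator $i_M^G(\sigma\otimes\psi)(f)$ is a matrix whose entries are finite linear combinations of the functions $\psi\mapsto\psi(m)$, so its trace is regular in $\psi$.

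For the surjectivity, let $F$ be a good form and choose $K$ as in condition (ii). By Bernstein's theorem, $\CH(G,K)=e_K\CH e_K$ is a finite module over its center, a finitely generated $\bC$-algebra, so only finitely many inertial classes $\fk s\in\CB(G)$ contain an irreducible representation with nonzero $K$-fixed vectors. Hence $F$ vanishes on all but finitely many blocks $\C C^{\fk s}(G)$ and is a finite sum of good forms each supported on one block, so it suffices to treat $F$ supported on a single block with $\fk s=[M,\sigma]$, $\sigma$ supercuspidal on $M$. I would then argue by induction on the semisimple rank of $G$, or equivalently on the codimension of $M$ in $G$.

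The first step is to match $F$ on all standard modules. By the classical descent identity $\tr i_M^G(\tau)(f)=\tr\tau(f^{(M)})$, where $f^{(M)}\in\CH(M)$ is the normalized constant term of $f$, together with the surjectivity of the constant-term map $\CH(G)\to\CH(M)$, the problem reduces to producing $h\in\CH(M)$ with prescribed traces $\tr(\sigma\otimes\psi)(h)$. Since $\sigma$ is supercuspidal, the orthogonality of matrix coefficients (Harish-Chandra) shows that as $h$ ranges over $\CH(M)$ these traces realize, up to the finite symmetry of the block, all regular functions of $\psi$ on $\CX(M)$; combined with condition (i), which guarantees $\psi\mapsto F(i_M^G(\sigma\otimes\psi))$ is regular, this produces $f_0\in\CH(G)$ with $\tr i_M^G(\sigma\otimes\psi)(f_0)=F(i_M^G(\sigma\otimes\psi))$ for every $\psi$. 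When $M=G$, the only irreducibles in the block are the twists $\sigma\otimes\psi$ themselves, so this already settles the base case.

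The main obstacle is passing from standard modules to their irreducible constituents: $F-\tr(f_0)$ is a good form annihilating every $i_M^G(\sigma\otimes\psi)$ but not necessarily the individual composition factors that appear where $i_M^G(\sigma\otimes\psi)$ is reducible. The resolution, which is the technical heart of the Bernstein-Deligne-Kazhdan argument, is to work over the Bernstein center $\mathfrak Z(\fk s)=\bC[\Theta(G)_{\fk s}]$: the span in $R(G)$ of the irreducibles of the block is a finitely generated $\mathfrak Z(\fk s)$-module with finitely many irreducibles over each infinitesimal character, the good forms on the block are precisely the $\mathfrak Z(\fk s)$-regular functionals on it, and the reducibility locus is a proper closed subvariety of $\Theta(G)_{\fk s}$ along which the relevant virtual characters are obtained by parabolic induction from Bernstein data of strictly smaller rank. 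Feeding those induced contributions into the inductive hypothesis and localizing over $\mathfrak Z(\fk s)$ (a Nakayama-type reduction) brings the problem down to a finite-dimensional statement at each point $\theta\in\Theta(G)_{\fk s}$, namely that the finitely many irreducible characters with infinitesimal character $\theta$ are linearly independent, which is the classical linear independence of distinct irreducible characters. This closes the induction and shows every good form is a trace form.
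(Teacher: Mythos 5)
The paper offers no proof of this statement: it is quoted directly from \cite[Theorem 1.2]{BDK}, so your proposal has to be measured against the Bernstein--Deligne--Kazhdan argument itself. Your easy inclusion (trace forms are good forms), the reduction to finitely many Bernstein components via condition (ii), and the cuspidal base case via matrix coefficients are all fine and consistent with that argument. The first genuine gap is the claim that the constant-term map $\CH(G)\to\CH(M)$ is surjective; it is not, so the reduction ``produce $h\in\CH(M)$ with prescribed traces'' does not lift back to an $f\in\CH(G)$. For $G=SL(2,F)$ and $M=T$ the diagonal torus, $i_B^G(\chi)$ and $i_B^G(\chi^{-1})$ have the same class in $R(G)$, hence every constant term $f^{(T)}$ satisfies $\tr\chi(f^{(T)})=\tr\chi^{-1}(f^{(T)})$ for all smooth characters $\chi$; since characters separate elements of $\CH(T)$, the image of the constant-term map is a proper subspace of $\CH(T)$. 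What is true, and what \cite{BDK} exploit, is that the target function $\psi\mapsto F(i_M^G(\sigma\otimes\psi))$ is automatically $W_{\fk s}$-invariant, and the substantive point is to show that every such $W_{\fk s}$-invariant regular function on $\CX(M)$ is realized as $\psi\mapsto\tr i_M^G(\sigma\otimes\psi)(f)$ for some $f\in\CH(G)$; this is exactly where the combinatorial part of their argument (the analysis of the finite group action on $\CX(M)$) enters, and you cannot bypass it by invoking surjectivity of the constant term.

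The second gap is that the step you call the technical heart is not an argument as stated. The span of the irreducibles of the block is not a finitely generated $\mathfrak Z(\fk s)$-module: the center acts on each irreducible through its infinitesimal character, so $R(G,\fk s)_\bC$ is a direct sum of skyscraper-type eigenspaces over $\Theta(G)_{\fk s}$, and no Nakayama-type localization is available. Moreover, linear independence of the finitely many irreducible characters lying over a fixed $\theta$ is a separation (injectivity-flavored) statement; it does not yield surjectivity of $\tr$ onto good forms, which is the whole content of the theorem. Finally, the composition factors at reducibility points are in general not ``obtained by parabolic induction from Bernstein data of strictly smaller rank'': in the unramified principal series block of $SL(2)$ they include the Steinberg and trivial representations, which are elliptic and not properly induced. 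The actual argument of \cite{BDK} controls these non-induced directions by the finiteness of discrete infinitesimal characters in each component modulo unramified twist (their Proposition 3.1, the same input used in Lemma \ref{l:terminate} of this paper) combined with an induction over Levi classes; some input of this kind must replace your fiberwise reduction before the sketch becomes a proof.
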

In fact, Kazhdan's density theorem says that $\tr$ is also injective \cite{Ka1,Ka2}, but we will not use this result.

\subsection{}We now look at the restriction of $\tr$ to $\overline \CH_c(G)$.
If $\sigma\in\widehat K$, $K$ a compact open subgroup, let $\chi_\sigma:K\to \bC$ denote its character. One may extend this by zero to an element of $\CH(G)$ and regard $\chi_\sigma$ as an element of $\overline \CH_c(G).$ In fact, the compact cocenter $\overline\CH_c(G)$ is spanned by $\chi_\sigma$ as $\sigma$ ranges over $\widehat K$ for all compact open subgroups $K$.
If $\pi$ is an admissible $G$-representation, then it is well known that
\[\tr\pi(\chi_\sigma)=\mu(K) ~m^K_\pi(\sigma^\vee),\] 
where $\sigma^\vee$ is the contragredient representation of $\sigma$.
Therefore, if $f\in \overline \CH_c(G)$ then $\tr(\pi(f))=0$ for all $\pi\in R_\diff(G)$, since $i_P^G(\sigma)|_K=i_P^G(\sigma\otimes\chi)|_K$ for all compact $K$. This means that we naturally have a map
\begin{equation}
\tr_c:\overline\CH_c(G)\to \C F(\overline R_c(G))_\good.
\end{equation}
The following result is proved in \cite{CH2} in the more general setting of mod-$l$ representations. It  can also be deduced from \cite{BDK} in combination with \cite{Da,Da2}.
\begin{theorem}[The rigid trace Paley-Wiener theorem, \cite{CH2}]\label{t:rigid-PW}
The map $\tr_c:\overline\CH_c(G)\to \C F(\overline R_c(G))_\good$ is surjective. Moreover, for every Iwahori filtration subgroup $I_m$, $m>0$:
\begin{enumerate}
\item $\tr_c:\overline\CH_c(G,I_m)\to \C F(\overline R_c(G,I_m))$ is surjective.
\item $\dim\overline\CH_c(G,I_m)<\infty.$
\end{enumerate}
\end{theorem}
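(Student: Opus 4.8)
The plan is to deduce this from the trace Paley--Wiener theorem of \cite{BDK} together with Dat's identification of $K_0(G)$ with the compact cocenter via the Hattori--Stallings trace (Theorem~\ref{t:HS} and the refinements (a), (b) recorded after it). The key point is that for a finitely generated projective $G$-representation $P$ and any admissible $\pi$ one has $\tr\pi(\tau[P])=\dim\Hom_G(P,\pi)$. Since by Dat $K_0(G)_\bQ$ is generated by the compactly induced modules $\cind_K^G(\mu)$, for which $\Hom_G(\cind_K^G(\mu),\pi)\cong\pi_\mu$, the trace of an arbitrary element of $\overline\CH_c(G)=\tau\bigl(K_0(G)_\bC\bigr)$ is a linear combination of multiplicity functions $\pi\mapsto m_\pi^K(\mu)$. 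Each of these depends only on $\pi|_K$, hence --- since $i_P^G(\sigma)|_K\cong i_P^G(\sigma\otimes\chi)|_K$ for every compact $K$ --- vanishes on $R_\diff(G)$ and is a good form on $\overline R_c(G)$. Because $\tau$ is an isomorphism onto $\overline\CH_c(G)$, this already identifies the image:
\[
\im\bigl(\tr_c\colon\overline\CH_c(G)\to\C F(\overline R_c(G))\bigr)=\operatorname{span}\bigl\{\pi\mapsto m_\pi^K(\mu)\bigr\}\subseteq\C F(\overline R_c(G))_\good .
\]

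The content is the reverse inclusion: every good form on $\overline R_c(G)$ --- equivalently, every good form on $R(G)$ vanishing on $R_\diff(G)$ --- is a finite combination of multiplicity functions. Equivalently, under the surjection $\overline\CH(G)\twoheadrightarrow\C F(R(G))_\good$ of \cite{BDK}, the subspace of forms annihilating $R_\diff(G)$ should correspond to $\overline\CH_c(G)$; the inclusion $\overline\CH_c(G)\subseteq\{f\in\overline\CH(G):\tr\pi(f)=0\ \forall\pi\in R_\diff(G)\}$ is the observation preceding the statement. For the other inclusion, I would take $f\in\overline\CH(G)$ with $\tr\pi(f)$ annihilating $R_\diff(G)$; this says precisely that $\tr i_M^G(\sigma\otimes\psi)(f)$ is independent of $\psi\in\CX(M)$ for every standard parabolic $P=MN$ (including $P=G$) and every irreducible $\sigma$ of $M$. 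By van Dijk's formula $\tr i_M^G(\sigma\otimes\psi)(f)=\tr(\sigma\otimes\psi)(f^{(P)})$ applied to proper $P$, together with the case $P=G$, this forces the orbital integrals of $f$ to vanish at all non-compact regular semisimple elements; and then the converse direction of the abstract Selberg principle --- this is exactly where \cite{Da,Da2}, or \cite{CH2}, enters --- places $[f]$ in $\overline\CH_c(G)$. I expect this last step, the passage from ``$\tr\pi(f)$ is invariant under unramified twists'' to ``$[f]$ lies in the compact cocenter'', to be the main obstacle: it is the one place that uses genuine harmonic analysis (density of characters, the behaviour of orbital integrals near non-compact elements, the Hattori--Stallings trace), rather than the formal bookkeeping above. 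A variant closer to \cite{CH2} is to split $\overline R_c(G)$ along Bernstein components and then along the elliptic decomposition, and to realize $f\in\overline\CH_c(G)$ from Euler--Poincar\'e (pseudo-coefficient) functions, which lie in the compact cocenter precisely by Theorem~\ref{t:HS}.

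For the finite-level assertions I would work inside the level filtration. For (2): a class $\chi_\mu\in\overline\CH_c(G)$, $\mu\in\widehat K$, is $I_m$-biinvariant only when (a conjugate of) $K$ contains $I_m$ and $\mu$ is trivial on $I_m$; there are finitely many $G$-conjugacy classes of maximal compact open subgroups of $G$, and for each the finite group $K/N$ ($N$ the normal closure of $I_m$ in $K$) has finitely many irreducible representations, so $\overline\CH_c(G,I_m)$ is spanned by finitely many $\chi_\mu$ and is finite dimensional. For (1): only finitely many Bernstein components meet level $I_m$, and the level of an irreducible representation is constant along a component (the restriction of $i_P^G(\sigma\otimes\chi)$ to a compact subgroup being independent of $\chi$), so $\overline R_c(G,I_m)$ is a finite direct sum of summands of $\overline R_c(G)$, in particular finite dimensional, and every functional on it is automatically a good form. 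Combining this with the block decomposition $\overline\CH_c(G)=\bigoplus_{\fk s}\overline\CH_c(G)_{\fk s}$ and the global surjectivity, such a functional is realized by an element of $\overline\CH_c(G)$ supported on the finitely many relevant blocks, and by (2) --- each block contributing only at bounded level --- this element may be taken at level $I_m$. These finite-level refinements are routine once the global statement and the finiteness in (2) are in hand; the crux of the theorem is the converse Selberg principle invoked above.
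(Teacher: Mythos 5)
This statement is not proved in the paper: it is quoted from \cite{CH2}, with only the remark that it ``can also be deduced from \cite{BDK} in combination with \cite{Da,Da2}''. Your sketch follows exactly that alternative route, and the easy half (that multiplicity functions $\pi\mapsto m^K_\pi(\mu)$ span the image of $\tr_c$ and kill $R_\diff(G)$, via Theorem \ref{t:HS} and Frobenius reciprocity) is fine and agrees with the discussion preceding the theorem. But the substance of the theorem is precisely the two places you defer. For the converse inclusion you invoke a ``converse direction of the abstract Selberg principle'' to pass from ``$\tr\pi(f)$ is invariant under all unramified twists on all Levis'' to ``$[f]$ is represented by a function supported on compact elements''; no such converse is available off the shelf (Blanc--Brylinski and Dat go the other way), and filling it in along your lines requires van Dijk descent, Kazhdan's density theorem on every Levi (which this paper deliberately avoids using), and Harish-Chandra character regularity together with the Weyl integration formula even to handle the elliptic non-compact elements via twisting by unramified characters of $G$ itself. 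You correctly flag this as the main obstacle, but that means the central claim is asserted, not proved: you would essentially be re-deriving the main theorem of \cite{CH2} rather than deducing it.

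The finite-level assertions (1) and (2) are also not ``routine once the global statement is in hand''; in \cite{CH2} they are where the real finiteness work (the Newton decomposition of the cocenter) happens. Your argument for (2) conflates ``the class has an $I_m$-biinvariant representative supported on compact elements'' with ``the class is a linear combination of characters $\chi_\mu$ with $I_m\subseteq K$ and $\mu$ trivial on $I_m$'': an element of $\CH_c(G)\cap\CH(G,I_m)$ is supported on compact elements spread over infinitely many $I_m$-double cosets, and rewriting its class as a sum of classes of functions on maximal compact subgroups destroys $I_m$-biinvariance. Surjectivity of $\bigoplus_{K\supseteq I_m}\CH(K)$-level pieces onto the level-$I_m$ compact cocenter is exactly the kind of statement that is nontrivial --- it is the analogue of the condition the paper has to \emph{impose} as Definition \ref{d:rigid-type} in the relative setting --- so it needs an argument, not an assertion. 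Similarly, in (1) the step ``this element may be taken at level $I_m$'' is unjustified: Bernstein projectors are not compactly supported, and truncation $f\mapsto e_{I_m}\star f\star e_{I_m}$ does not preserve traces, so realizing a functional on $\overline R_c(G,I_m)$ by an element of $\overline\CH_c(G)$ lying in $\overline\CH(G,I_m)$ does not follow formally from the global surjectivity plus (2). As it stands, the proposal reproduces the easy bookkeeping and correctly locates the difficulties, but the theorem's actual content remains unproved.
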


\subsection{}We need to look at the action of the Bernstein center. Let $U(G)$ denote the inverse limit algebra
\begin{equation}
U(G)=\varprojlim_K \CH(G,K),
\end{equation}
where for $K'\subset K$, the inverse limit system is given by $p_{K',K}: \CH(G,K')\to \CH(G,K),$ $p_{K',K}(f)=e_{K}f e_{K}.$ This is a unital associative algebra. As it is known \cite{BD}, see also the exposition in \cite{MT}, the algebra $U(G)$ has the equivalent description as the convolution algebra of essentially compact distributions:
\begin{equation}
U(G)=\{D\in\C D(G)\mid D\star f\text{ and } f\star D\text{ are in }\CH(G)\text{ for all }f\in\CH(G)\}.
\end{equation}
The Bernstein center can be identified as
\begin{equation}
\fk Z(G)=U(G)^G=Z(U(G)).
\end{equation}
The star operation $*$ extends to $U(G)$ by setting
\[
D^*(f)=D(f^*),\text{ for all }f\in\CH(G).
\]
The Hecke algebra itself is a $*$-subalgebra (in fact, a two-sided ideal) of $U(G)$. The Bernstein center $\fk Z(G)$ acts on $\CH(G)$ as the center of $U(G)$. This also gives an action of $\fk Z(G)$ on $\overline\CH(G)$. On the other hand, the central algebra $\fk Z(G)$ acts on $R(G)$ via infinitesimal characters (on each irreducible representation), see subsection \ref{s:inf-char}, and therefore on $\C F(G)$. As remarked in \cite[\S4.1]{BDK}, 
\[\tr:\overline\CH(G)\to \C F(G)_\good\text{ is a }\fk Z(G)\text{-homomorphism}.
\]
The similar refinement for the rigid trace Paley-Wiener theorem is more subtle. Let $\delta_{G_c^0}$ be the distribution on $\CH(G)$, $\delta_{G_c^0}(f)=f|_{G_c^0}$, i.e.,  the restriction to $G_c^0$.  Notice that $f\in \overline\CH_c(G)$ if and only if
\begin{equation}
 \delta_{G_c^0}(f)=f.
\end{equation}The nontrivial step is the following:
\begin{proposition}[{\cite[Proposition 2.8]{Da2}}]\label{p:Dat}
For every idempotent element $e\in \fk Z(G)$, 
\[e\circ \delta_{G_c^0}=\delta_{G_c^0}\circ e \text{ in }\End(\overline \CH(G))).
\]
\end{proposition}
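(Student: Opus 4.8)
The plan is to reduce the statement to the assertion that the restriction operator $\delta:=\delta_{G_c^0}$ preserves each Bernstein block of $\overline\CH(G)$, and then to prove this in two halves — a ``rigid'' half from the Selberg principle, and a ``non-rigid'' half from the geometry of conjugacy classes in parabolics. First I would record that $\delta$ is a well-defined idempotent endomorphism of $\overline\CH(G)$ with image $\overline\CH_c(G)$: indeed $G_c^0$ is open, closed and conjugation-stable, so $f\mapsto f|_{G_c^0}$ descends to the cocenter and is idempotent, and $\overline\CH_c(G)=\{f:\delta(f)=f\}$. Write $\overline\CH(G)=\bigoplus_{\fk s\in\CB(G)}\overline\CH(G)_{\fk s}$ for the decomposition induced by the Bernstein product decomposition of $\CH(G)$, and let $e_{\fk s}$ also denote the projector onto the $\fk s$-summand. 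Since $\fk Z(\fk s)$ is the coordinate ring of the connected (irreducible) affine variety $\Theta(G)_{\fk s}$, it has no idempotents but $0,1$, so every idempotent of $\fk Z(G)=\prod_{\fk s}\fk Z(\fk s)$ is $e_{\C S}=\sum_{\fk s\in\C S}e_{\fk s}$ for some $\C S\subseteq\CB(G)$, acting on $\overline\CH(G)$ as the projector onto $\bigoplus_{\fk s\in\C S}\overline\CH(G)_{\fk s}$. Hence it suffices to show $\delta\circ e_{\fk s}=e_{\fk s}\circ\delta$ for every $\fk s$, i.e.\ that $\delta$ preserves each block $\overline\CH(G)_{\fk s}$.

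Next, the easy half: $e_{\fk s}$ preserves $\im\delta=\overline\CH_c(G)$. By the Selberg principle (Theorem~\ref{t:HS}), $\overline\CH_c(G)=\tau(K_0(G)_\bC)$. The category of finitely generated projective smooth representations decomposes over the $\C C^{\fk s}(G)$, giving $K_0(G)=\bigoplus_{\fk s}K_0(G)_{\fk s}$, and $\tau$ carries $K_0(G)_{\fk s}$ into $\overline\CH(G)_{\fk s}$ because a projective in $\C C^{\fk s}(G)$ is cut out by an idempotent matrix over the block algebra $\CH(G)_{\fk s}$, whose Hattori--Stallings trace lies in $\CH(G)_{\fk s}$. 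Thus $\overline\CH_c(G)=\bigoplus_{\fk s}\big(\overline\CH_c(G)\cap\overline\CH(G)_{\fk s}\big)$, so $e_{\fk s}(\overline\CH_c(G))\subseteq\overline\CH_c(G)$. Setting $C:=\delta\circ e_{\fk s}-e_{\fk s}\circ\delta$, it follows formally that $C$ vanishes on $\overline\CH_c(G)=\im\delta$ (there $\delta$ is the identity and $e_{\fk s}$ is stable) and that $\im C\subseteq\im\delta+e_{\fk s}(\im\delta)\subseteq\overline\CH_c(G)$; hence $C$ factors through $\overline\CH(G)/\overline\CH_c(G)$, takes values in $\overline\CH_c(G)$, and $C^2=0$. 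In particular $C=0$ as soon as $e_{\fk s}$ also stabilizes $\ker\delta$.

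The hard part is exactly that $e_{\fk s}$ stabilizes $\ker\delta$ — the image in $\overline\CH(G)$ of the functions supported on non-compact elements — and here I would argue by induction on the semisimple $F$-rank of $G$. In the base case $G$ has no proper $F$-parabolic: then every irreducible smooth representation is finite dimensional, $G_c^0$ equals the open subgroup $G^0$ generated by the compact subgroups, $G\smallsetminus G_c^0$ is a union of $G^0$-cosets, and each block idempotent $e_{\fk s}$ is a function supported on $G^0\subseteq G_c^0$ (the usual projector onto the $\sigma_{\fk s}$-isotypic component for the compact group $G^0$), so convolution by it visibly preserves functions supported on $G\smallsetminus G_c^0$. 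In the inductive step one uses that every non-compact element of $G$ is conjugate into the Levi factor $M$ of a proper parabolic and is non-compact as an element of $M$; combined with the geometric description of the part of the cocenter carried by functions supported near proper parabolics, this yields
\[\ker\delta_{G_c^0}=\sum_{M\subsetneq G}\bar i_M^G\big(\ker\delta_{M_c^0}\big),\]
where $\bar i_M^G:\overline\CH(M)\to\overline\CH(G)$ is parabolic induction on cocenters. Since $\bar i_M^G$ is $\fk Z(G)$-equivariant relative to the finite morphism $i_M^G:\Theta(M)\to\Theta(G)$ (so $\im\bar i_M^G$ is $\fk Z(G)$-stable), and an element of $M$ is compact in $M$ iff it is compact in $G$ (so $\bar i_M^G$ respects the compact/non-compact splitting), the inductive hypothesis applied to the proper Levis shows that $\ker\delta_{G_c^0}$ is $\fk Z(G)$-stable. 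Therefore $C=0$ and $\delta$ commutes with $e_{\fk s}$, as required.

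I expect the genuinely delicate ingredient to be the identity $\ker\delta_{G_c^0}=\sum_{M\subsetneq G}\bar i_M^G(\ker\delta_{M_c^0})$ used in the inductive step — the non-formal fact that a cocenter class represented by a function supported on proper parabolics is parabolically induced. This rests on the analysis of the cocenter via conjugacy classes (van Dijk's formula and the ``elliptic cocenter''), which is the substance of \cite[\S2]{Da2}; everything else above is either formal manipulation or a direct consequence of the Selberg principle.
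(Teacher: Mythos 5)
The paper itself offers no argument for this proposition: it is quoted verbatim from Dat \cite[Proposition 2.8]{Da2}, so your proposal has to be judged on its own merits. Your formal reduction is fine (commuting with the idempotent $\delta_{G_c^0}$ is equivalent to the Bernstein projectors preserving both its image and its kernel, and idempotents of $\fk Z(G)$ are sums of block projectors), and your ``easy half'' — stability of $\im\delta_{G_c^0}=\overline\CH_c(G)$ under $e_{\fk s}$ via the Selberg principle, Theorem \ref{t:HS}, and the blockwise decomposition of $K_0(G)$ — is sound; it is in fact the only consequence of the proposition that the paper uses (Corollary \ref{c:rigid} needs exactly that $\overline\CH_c(G)$ is stable under Bernstein idempotents). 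But this does not yet give the stated commutation.

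The genuine gap is in the ``hard half'': the identity $\ker\delta_{G_c^0}=\sum_{M\subsetneq G}\bar i_M^G\bigl(\ker\delta_{M_c^0}\bigr)$ on which your induction rests is false in general. For the map $\bar i_M^G$ adjoint to the Jacquet functor (the only candidate with the $\fk Z(G)$-equivariance you invoke), every class in its image, $M\subsetneq G$, is annihilated by the trace of every irreducible supercuspidal representation of $G$, because supercuspidals have vanishing Jacquet modules. Yet $\ker\delta_{G_c^0}$ contains classes with nonzero supercuspidal traces whenever $G$ has non-compact elements with central Newton point: take $G=GL_2(F)$, a supercuspidal $\pi$, and $f$ supported on $\varpi\, GL_2(\mathcal{O})$ defined by $f(\varpi k)=f'(k)$ with $f'$ the isotypic projector of a $GL_2(\mathcal{O})$-type of $\pi$; then $f$ vanishes on $G_c^0$, so $[f]\in\ker\delta_{G_c^0}$, while $\tr\pi(f)=\omega_\pi(\varpi)\,\tr\pi(f')\neq 0$. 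So the inductive step already fails for $GL_2$ (semisimple rank one, not covered by your base case); relatedly, your assertion that every non-compact element is conjugate into a proper Levi fails for such elements, which can be elliptic. The correct statement of this type replaces the compact cocenter by the \emph{rigid} cocenter (classes supported on elements with central Newton point), as in \cite{CH2}; the discrepancy between rigid and compact — exactly the classes exhibited above — is the non-formal content of Dat's proposition, and the induction as you have set it up cannot absorb it.
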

Here, we think of $e\in \End(\overline \CH(G))$ via $e(f)=e\star f$, for $f\in\overline\CH(G)$. Proposition \ref{p:Dat} says that for every $f\in\overline\CH_c(G)$, 
\[e\star f=e(f)=e(\delta_{G_c^0}(f))=\delta_{G_c^0}(e(f)),
\]
which means that $\CH_c(G)$ is invariant under the actions of Bernstein idempotents.

For every Bernstein component $\fk s\in\C B(G)$, denote by $e_\fk s$ the corresponding idempotent. Set \[\CH(G,\fk s)=\CH(G)e_{\fk s},\ \overline\CH(G,\fk s)=\CH(G)e_{\fk s},\text{ and }R(G,\fk s)=R(G)e_{\fk s}.\]
 Let $\overline R_c(G,\fk s)$ be the image of $R(G,\fk s)$ in $\overline R_c(G)$.  By the discussion above, we can also define $\overline\CH_c(G,\fk s)=\overline\CH_c(G)e_{\fk s}\subset \overline\CH_c(G).$

\begin{corollary}\label{c:rigid}
The map $\tr_c:\overline \CH_c(G)\to \C F(\overline R_c(G))_\good$ induces surjective maps
\[\tr_c(\fk s): \overline \CH_c(G,\fk s)\to \C F(\overline R_c(G,\fk s)),
\]
for every Bernstein component $\fk s$. The spaces $\overline \CH_c(G,\fk s)$ are finite dimensional.
\end{corollary}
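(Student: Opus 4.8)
The plan is to decompose both sides along the Bernstein decomposition and reduce to the rigid trace Paley--Wiener theorem \ref{t:rigid-PW}. Two structural facts underlie the reduction. First, $R_\diff(G)$ is compatible with the block decomposition $R(G)=\bigoplus_{\fk t}R(G,\fk t)$: the span of the generators $i_P^G(\sigma)-i_P^G(\sigma\otimes\chi)$ is unchanged if $\sigma$ is taken irreducible, and for irreducible $\sigma$ both $i_P^G(\sigma)$ and $i_P^G(\sigma\otimes\chi)$ lie in one and the same Bernstein block (twisting $\sigma$ by an unramified character of $M$ does not change the inertial class of its cuspidal support); hence $\overline R_c(G)=\bigoplus_{\fk t}\overline R_c(G,\fk t)$, and a linear form on $\overline R_c(G,\fk s)$ is the same thing as a form on $\overline R_c(G)$ vanishing off the $\fk s$-block. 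Second, by Proposition \ref{p:Dat} the Bernstein idempotent $e_\fk s$ preserves $\overline\CH_c(G)$, so $\overline\CH_c(G,\fk s)=e_\fk s\overline\CH_c(G)$ is a genuine direct summand. Since $\pi(e_\fk s\cdot f)=0$ whenever $\pi$ lies outside the block $\fk s$, the map $\tr_c$ sends $\overline\CH_c(G,\fk s)$ into $\C F(\overline R_c(G,\fk s))$; this is $\tr_c(\fk s)$.

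For surjectivity, I would take $F\in\C F(\overline R_c(G,\fk s))$ and extend it by zero to a linear form $\tilde F$ on $\overline R_c(G)$, then check that $\tilde F$ is a good form. Condition (i) is automatic for any form on $\overline R_c(G)$, because in $\overline R_c(G)$ one has $i_M^G(\sigma\otimes\psi)\equiv i_M^G(\sigma)$ for every $\psi\in\CX(M)$ (the difference lies in $R_\diff(G)$), so $\psi\mapsto\tilde F(i_M^G(\sigma\otimes\psi))$ is constant, hence regular. For condition (ii), I would choose a special self-adjoint idempotent $e$ with $\C C_e(G)=\C C^{\fk s}(G)$ and an integer $m$ with $e\in\CH(G,I_m)$; since any nonzero $V\in\C C^{\fk s}(G)$ has $\pi(e)V\neq 0$ and $\pi(e)V\subseteq V^{I_m}$, the form $\tilde F$ vanishes on every $V$ with $V^{I_m}=0$, so $\tilde F$ is good with $K=I_m$. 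By Theorem \ref{t:rigid-PW} there is $f\in\overline\CH_c(G)$ with $\tr_c(f)=\tilde F$; then $e_\fk s f\in\overline\CH_c(G,\fk s)$ by Proposition \ref{p:Dat}, and, using that $\tr$ (hence $\tr_c$) is a $\fk Z(G)$-homomorphism together with $e_\fk s\tilde F=\tilde F$, one gets $\tr_c(e_\fk s f)=e_\fk s\cdot\tr_c(f)=\tilde F$, whose restriction to $\overline R_c(G,\fk s)$ is $F$.

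For finite-dimensionality, I would keep $e$ and $m$ as above and first show that the image in $\overline\CH(G)$ of the ideal $\CH(G,\fk s)=\CH(G)e\CH(G)$ coincides with the image of the unital subalgebra $e\CH(G)e\subseteq\CH(G,I_m)$: modulo commutators $feg\equiv egf=e(gf)$, and then $e(gf)=e\cdot e(gf)\equiv e(gf)\cdot e$, so every $feg$ is congruent to an element of $e\CH(G)e$. Intersecting with $\overline\CH_c(G)$, and using that $\delta_{G_c^0}$ is an idempotent operator on $\overline\CH(G)$ with image $\overline\CH_c(G)$, one concludes that $\overline\CH_c(G,\fk s)$ sits inside $\overline\CH_c(G,I_m)$, which is finite-dimensional by Theorem \ref{t:rigid-PW}(2). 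I expect this last point to be the main obstacle: one must track carefully the interplay between the block decomposition, the compact part $\overline\CH_c$, and the finite-level filtration $\overline\CH(G,I_m)$ — concretely, that the $\fk s$-part of the compact cocenter is already ``visible'' at the finite level of a special idempotent for $\fk s$. Alternatively, finite-dimensionality would follow from the surjectivity just proved together with Kazhdan's density theorem (giving injectivity of $\tr_c(\fk s)$) and the finite-dimensionality of the rigid representation space $\overline R_c(G,\fk s)$.
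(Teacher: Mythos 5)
Your proof is correct and follows the same basic strategy as the paper (Theorem \ref{t:rigid-PW} plus Proposition \ref{p:Dat}, with an auxiliary $I_m$ adapted to the block $\fk s$), but you execute the two halves somewhat differently, and in one place more carefully. For surjectivity the paper simply invokes Theorem \ref{t:rigid-PW}(1) at level $I_m$; your zero-extension together with the verification that the extension is a good form (condition (i) is automatic for any form factoring through $\overline R_c(G)$, condition (ii) via a special self-adjoint idempotent $e$ for $\fk s$ with $e\in\CH(G,I_m)$) is a valid substitute, and the final projection by $e_{\fk s}$ using Proposition \ref{p:Dat} and the $\fk Z(G)$-equivariance of $\tr$ is exactly what is needed. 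For finite-dimensionality the paper asserts $\CH(G,\fk s)\subset\CH(G,I_m)$; taken literally this cannot hold (the ideal $\CH(G)e_{\fk s}$ is stable under left and right translation, so it is not contained in any $\CH(G,I_m)$), and the correct statement is the cocenter-level inclusion $\overline\CH(G,\fk s)\subseteq\overline\CH(G,I_m)$, which is precisely what your computation $feg\equiv egf=e\cdot(egf)\equiv (egf)e\in e\CH(G)e\subseteq\CH(G,I_m)$ supplies; so your route in effect repairs the paper's step. The point you flag but leave open --- passing to the compact parts --- reduces to the fact that $G_c^0$ is a union of $I_m$-double cosets (indeed of Iwahori double cosets: $I\dot wI$ lies in a compact open subgroup when $w$ has finite order, as in section \ref{s:rigid-types}, and contains no compact elements otherwise), so $\delta_{G_c^0}$ preserves $\overline\CH(G,I_m)$; this fact is implicit in the paper and in \cite{CH2}, so it is not a genuine gap, but it should be stated. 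Finally, treat your ``alternative'' ending with caution: the finite-dimensionality of $\overline R_c(G,\fk s)_\bQ$ is deduced in the paper from this very corollary, so using it (together with Kazhdan's density theorem, which the paper deliberately avoids) would be circular unless you establish it independently.
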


\begin{proof}
The first part is immediate from Theorem \ref{t:rigid-PW}(1) and Proposition \ref{p:Dat}. For the second part, notice that for each $\fk s$ there exists a sufficietly small subgroup $I_m$ such that $\CH(G,\fk s)\subset \CH(G,I_m)$. Then $\overline\CH_c(G,\fk s)\subset \overline\CH_c(G,I_m)$ and the second claim follows from Theorem \ref{t:rigid-PW}(2).
\end{proof}

\subsection{}Now suppose that $(\C K,\rho)$ is a type with $e_\rho\in \CH(G)$ the corresponding idempotent. Let $R(G,\rho)$ be the subspace of $R(G)$ spanned by the irreducible objects in $(\C K,\rho)$ and $\overline R_c(G,\rho)$ its image in $\overline R_c(G)$. Since $e_\rho$ is supported in $G_c^0$, $e_\rho \CH_c(G) e_\rho\subset \CH_c(G)$. Denote by $\overline{e_\rho \CH_c(G) e_\rho}$ the image of $e_\rho \CH_c(G) e_\rho\subset \CH_c(G)$ in $\overline \CH_c(G)$.

If $f\in e_\rho\CH e_\rho$ and $(\pi,V)$ is an admissible representation, then
\begin{equation}
\tr(f,V)=\tr(f,\pi(e_\rho)V),
\end{equation}
and, in particular, $\tr(f,V)=0$ for all irreducible $V$ not in $\C C_{e_\rho}(G)$. This means that the image of $e_\rho \overline\CH_c(G) e_\rho$ under $\tr_c$ lands in $ \overline R_c(G,\rho)$.

\begin{corollary} \label{c:rigid-type}
The restriction of the trace map $\tr_c: \overline{e_\rho \CH_c(G) e_\rho} \to \C F(\overline R_c(G,\rho))$ is surjective.
\end{corollary}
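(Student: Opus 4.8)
The plan is to obtain Corollary~\ref{c:rigid-type} from the componentwise rigid trace Paley--Wiener theorem, Corollary~\ref{c:rigid}, using two elementary facts: that $e_\rho$ is supported on only finitely many Bernstein components, so that everything in sight decomposes as a finite direct sum over $\C S(e_\rho)$; and that, for a \emph{single} component $\fk s\in\C S(e_\rho)$, the idempotent $e_\rho$ acts as the identity operator on every representation of $\C C^{\fk s}(G)$, so that sandwiching by $e_\rho$ leaves all traces on such representations unchanged.

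First I would set up the decomposition. Since $e_\rho$ is an idempotent of $\CH(G)=\bigoplus_{\fk t}\CH(G,\fk t)$, it is supported on finitely many components; hence $\C S(e_\rho)$ is finite, $\C C_{e_\rho}(G)=\prod_{\fk s\in\C S(e_\rho)}\C C^{\fk s}(G)$, and in particular every representation of $\C C^{\fk s}(G)$ with $\fk s\in\C S(e_\rho)$ already lies in $\C C_{e_\rho}(G)$. Because twisting by an unramified character preserves the inertial class, $R_\diff(G)$ respects the Bernstein decomposition, so $R(G,\rho)=\bigoplus_{\fk s\in\C S(e_\rho)}R(G,\fk s)$ gives $\overline R_c(G,\rho)=\bigoplus_{\fk s\in\C S(e_\rho)}\overline R_c(G,\fk s)$ and hence $\C F(\overline R_c(G,\rho))=\prod_{\fk s\in\C S(e_\rho)}\C F(\overline R_c(G,\fk s))$. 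Given $F\in\C F(\overline R_c(G,\rho))$, write $F=\sum_{\fk s}F_{\fk s}$ with $F_{\fk s}\in\C F(\overline R_c(G,\fk s))$; by Corollary~\ref{c:rigid} choose $\bar h_{\fk s}\in\overline\CH_c(G,\fk s)$ with $\tr_c(\fk s)(\bar h_{\fk s})=F_{\fk s}$, lift each $\bar h_{\fk s}$ to an element $h_{\fk s}\in\CH_c(G)$ (possible since $\overline\CH_c(G)$ is by definition the image of $\CH_c(G)$), and put $h=\sum_{\fk s\in\C S(e_\rho)}h_{\fk s}\in\CH_c(G)$. For an irreducible $(\pi,V)$ in a component $\fk t$ we then have $\tr\pi(h)=\sum_{\fk s}\tr_c(\bar h_{\fk s})(V)$, and since $\bar h_{\fk s}\in\overline\CH_c(G,\fk s)$ the term for $\fk s$ vanishes unless $\fk t=\fk s$; hence $\tr\pi(h)=F(V)$ when $\fk t\in\C S(e_\rho)$ and $\tr\pi(h)=0$ otherwise.

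To land inside $\overline{e_\rho\CH_c(G)e_\rho}$ rather than merely in $\overline\CH_c(G)$, I would now replace $h$ by $e_\rho h e_\rho\in e_\rho\CH_c(G)e_\rho$. For $\fk s\in\C S(e_\rho)$ and $(\pi,V)\in\C C^{\fk s}(G)$ one has $V\in\C C_{e_\rho}(G)$, i.e. $\pi(e_\rho)|_V=\Id_V$, whence
\[
\tr\pi(e_\rho h e_\rho)=\tr\bigl(\pi(e_\rho)\pi(h)\pi(e_\rho)\bigr)=\tr\pi(h);
\]
for irreducible $V\notin\C C_{e_\rho}(G)$ we have $\pi(e_\rho)V=0$, so both sides vanish. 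Thus $\tr_c(\overline{e_\rho h e_\rho})$ restricts to $F$ on $\overline R_c(G,\rho)$, and since $F$ was arbitrary the map is surjective. The one step that is not purely formal is the assertion that $e_\rho\CH_c(G)e_\rho$ is carried into the compact cocenter $\overline\CH_c(G)$: this cannot be read off from supports of convolutions (already $\C K\,G_c^0\,\C K$ is strictly larger than $G_c^0$), and is exactly the cocenter-level statement recorded just before the Corollary, resting on the compatibility of $\overline\CH_c(G)$ with the idempotents of the Bernstein center (Proposition~\ref{p:Dat}) and on the abstract Selberg principle (Theorem~\ref{t:HS}). Once that compatibility is granted, the argument above is routine.
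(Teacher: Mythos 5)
Your reduction to Corollary \ref{c:rigid}, and the construction of a lift $h\in\CH_c(G)$ whose trace functional agrees with $F$ on each component of $\C S(e_\rho)$ and vanishes elsewhere, is fine. The gap is in the final, essential step: replacing $h$ by $e_\rho h e_\rho$ and claiming $\tr\pi(e_\rho h e_\rho)=\tr\pi(h)$ because ``$\pi(e_\rho)|_V=\Id_V$ for $V\in\C C_{e_\rho}(G)$''. That is not what membership in $\C C_{e_\rho}(G)$ gives: in the Bushnell--Kutzko setting this category consists of representations \emph{generated} by their $\rho$-isotypic part (and, equivalently, of all representations with inertial support in $\C S(e_\rho)$), and on such a $V$ the operator $\pi(e_\rho)$ is the projection onto the $\rho$-isotypic subspace $\pi(e_\rho)V$, which for an infinite-dimensional irreducible $V$ is a proper (finite-dimensional) subspace --- e.g.\ for the Iwahori type, $\pi(e_I)$ projects onto $V^I$. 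In particular the irreducible tempered representations spanning $\overline R_c(G,\rho)$, on which your identity would have to hold, do not satisfy $\pi(e_\rho)=\Id$. Using $e_\rho^2=e_\rho$ and cyclicity of the trace one only gets $\tr\pi(e_\rho h e_\rho)=\tr\bigl(\pi(h)\pi(e_\rho)\bigr)$, the trace of the compression of $\pi(h)$ to $\pi(e_\rho)V$, which differs from $\tr\pi(h)$ in general (take for $h$ the character of a $K$-type with no $\rho$-typical vectors). So the element $e_\rho h e_\rho$ does not realize $F$, and surjectivity is not established.

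This is precisely the non-formal point, and it is not the one you flag at the end (well-definedness of $\overline{e_\rho\CH_c(G)e_\rho}$ inside the compact cocenter): the real work is to produce an element of $e_\rho\CH_c(G)e_\rho$ with prescribed traces, and sandwiching by $e_\rho$ does not do it. The paper proceeds differently: after reducing to a single component $\fk s$, it picks $I_m$ with $e_\rho\CH(G)e_\rho\subset\CH(G,I_m)$, uses the splitting $\C C_{I_m}(G)=\C C_{e_\rho}(G)\times\prod_{e'}\C C_{e'}(G)$ to decompose $K_0(G,I_m)$, and transports this through the Hattori--Stallings map and Theorem \ref{t:HS} to obtain $\overline\CH_c(G,I_m)=\overline{e_\rho\CH_c(G)e_\rho}\oplus\bigoplus_{e'}\overline\CH_c(G,e')$. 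Since $\tr_c$ is surjective at level $I_m$ by Theorem \ref{t:rigid-PW}(1), and each $\overline\CH_c(G,e')$ only hits $\C F(\overline R_c(G,\fk s'))$ (Corollary \ref{c:rigid}), the summand $\overline{e_\rho\CH_c(G)e_\rho}$ must surject onto $\C F(\overline R_c(G,\fk s))$. Some input of this $K_0$/projective-module type (or an equivalent) is what your argument is missing.
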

\begin{proof}
The category $\C C_{e_\rho}(G)$ is a direct product of finitely many Bernstein components \cite[(3.12)]{BK}. 
Without loss of generality, we may assume that $\C C_{e_\rho}(G)=\C C^{\fk s}(G)$ for a single Bernstein component $\fk s$. Then $\overline R_c(G,\rho)=\overline R_c(G,\fk s)$. 

Let $I_m$ be sufficiently small such that $e_\rho\CH(G) e_\rho\subset \CH(G,I_m)$. We use the methods from the proofs of  \cite[Lemmas 3.2, 3.3]{Ka1}. The category $\C C_{I_m}(G)$ splits into a finite direct product of subcategories:
\begin{equation}\C C_{I_m}(G)=\C C_{e_\rho}(G)\times \prod_{e'\neq e_\fk s} \C C_{e'}(G),
\end{equation}
for a finite set of Bernstein idempotents $\{e'\}$. This implies that 
\begin{equation}
K_0(G,I_m)=K_0(G,\rho)\oplus\bigoplus_{e'\neq e_\fk s} K_0(G,e'),
\end{equation}
in the obvious notation. Applying the Hattori-Stalling map $\tau$ and using Theorem \ref{t:HS}:
\begin{equation}
\overline H_c(G,I_m)=\overline{e_\rho \C H_c(G) e_\rho} \oplus\bigoplus_{e'} \overline\CH_c(G,e').
\end{equation}
By Corollary \ref{c:rigid}, $\tr_c$ maps $\overline\CH_c(G,e')$ onto $\C F(\overline R_c(G,\fk s'))$, where $\fk s'$ is the Bernstein component of $e'$. By Theorem \ref{t:rigid-PW}(1) 
\[\tr_c(\overline H_c(G,I_m))= \C F(\overline R_c(G,\fk s))\times\prod_{\fk s'\neq \fk s}\C F(\overline R_c(G,\fk s')),\]
and therefore $\tr_c(\overline{e_\rho \C H_c(G) e_\rho})=\C F(\overline R_c(G,\fk s)).$
\end{proof}

\smallskip

\begin{definition}\label{d:rigid-type}
We say that a type $(\C K,\rho)$ is \emph{rigid} if the composition of the natural maps
\[\bigoplus_{K\supseteq \C K} e_\rho H(K) e_\rho \rightarrow e_\rho \CH_c(G) e_\rho \twoheadrightarrow \overline{e_\rho \CH_c(G) e_\rho}
\]
is surjective.
\end{definition}

In section \ref{s:rigid-types}, we will discuss important examples of rigid types.

\section{The proofs of Theorem \ref{t:main} and Corollary \ref{c:main}}\label{s:proofs}
We are now in position to prove Theorem \ref{t:main}.

\subsection{} Firstly, we can sharpen Theorem \ref{t:signature} using the notion of rigid representations. Let $\CB_c(G,\fk s)=\{V_1,\dots,V_n\}$ denote a basis of $\overline R_c(G,\fk s)_\bQ$ consisting of irreducible tempered representations. From the previous section we know that $\overline R_c(G,\fk s)_\bQ$ is finite dimensional.

\begin{theorem}[The signature theorem II]\label{t:signature2} Let $(\pi,V)$ be an irreducible smooth $G$-representation with inertial support $\fk s$. Suppose that $V$ admits a $*$-invariant Hermitian form. Then there exist $w_1,\dots,w_n\in\bW\otimes_\bZ \bQ$ such that, for every compact open subgroup $K$, the $K$-signature of $V$ is
\[\sg_V^K=\sum_{i=1}^n w_i m_{V_i}^K.
\]
\end{theorem}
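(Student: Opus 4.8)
The plan is to bootstrap Theorem~\ref{t:signature2} from Theorem~\ref{t:signature} together with Corollary~\ref{c:rigid}. By Theorem~\ref{t:signature}, there are irreducible tempered representations $V'_1,\dots,V'_m$ in $\C C^{\fk s}(G)$ and $w'_j\in\bW$ such that $\sg_V^K=\sum_{j=1}^m w'_j\,\sg_{V'_j}^K=\sum_{j=1}^m w'_j\,m_{V'_j}^K$ for every compact open subgroup $K$ (using that tempered modules are $*$-unitary, as in the corollary following Theorem~\ref{t:signature}). The point is that the $V'_j$ need not lie in the chosen basis $\CB_c(G,\fk s)=\{V_1,\dots,V_n\}$ of $\overline R_c(G,\fk s)_\bQ$; the task is to rewrite the right-hand side in terms of that basis. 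Since each $V'_j$ is an irreducible tempered representation in $\C C^{\fk s}(G)$, its class in $\overline R_c(G,\fk s)_\bQ$ is a $\bQ$-linear combination $[V'_j]=\sum_{i=1}^n c_{ji}[V_i]$ with $c_{ji}\in\bQ$, simply because $\{V_i\}$ is a basis.

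The key step is then to argue that the identity $[V'_j]=\sum_i c_{ji}[V_i]$ in $\overline R_c(G,\fk s)_\bQ$ implies the numerical identity $m_{V'_j}^K=\sum_i c_{ji}\,m_{V_i}^K$ for \emph{every} compact open subgroup $K$. This is exactly the content of the restriction-to-$K$ maps respecting the passage to $\overline R_c(G)$: recall from the discussion preceding Theorem~\ref{t:rigid-PW} that for $\sigma\in\widehat K$ one has $\tr\pi(\chi_\sigma)=\mu(K)\,m^K_\pi(\sigma^\vee)$, and that $R_\diff(G)_\bQ$ lies in the kernel of every such functional since $i_P^G(\sigma)|_K=i_P^G(\sigma\otimes\chi)|_K$. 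Hence each $\pi\mapsto m^K_\pi(\sigma)$ factors through $\overline R_c(G)_\bQ$, so any $\bQ$-linear relation valid in $\overline R_c(G,\fk s)_\bQ$ pushes forward to a relation among the $K$-multiplicity functions. Applying this to the relation above and summing, I set $w_i=\sum_{j=1}^m w'_j\,c_{ji}\in\bW\otimes_\bZ\bQ$ and obtain $\sg_V^K=\sum_j w'_j\,m_{V'_j}^K=\sum_j w'_j\sum_i c_{ji}\,m_{V_i}^K=\sum_{i=1}^n w_i\,m_{V_i}^K$ for every $K$, which is the assertion.

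I expect the only genuine subtlety to be the bookkeeping with coefficients in $\bW\otimes_\bZ\bQ$ versus $\bW$: Theorem~\ref{t:signature} produces $w'_j\in\bW$, but the change-of-basis coefficients $c_{ji}$ are merely rational, so the resulting $w_i$ naturally live in $\bW\otimes_\bZ\bQ$, which is consistent with the statement. One should also note that $\overline R_c(G,\fk s)_\bQ$ being finite dimensional (Corollary~\ref{c:rigid}, second assertion, via $\overline R_c(G,\fk s)\hookleftarrow$ its identification as the dual side of $\overline\CH_c(G,\fk s)$) is what guarantees a finite basis $\CB_c(G,\fk s)$ exists in the first place, so that the sum $\sum_{i=1}^n$ is genuinely finite and independent of $V$. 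No new analytic input is needed beyond what Theorem~\ref{t:signature} already supplies; the content here is purely the observation that $K$-multiplicities are functions on the rigid (compact) representation space, so the signature formula can be transported to any prescribed tempered basis.
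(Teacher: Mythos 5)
Your argument is correct and is essentially the paper's own proof: the paper likewise reduces to expressing each irreducible tempered $V'$ with inertial support $\fk s$ in terms of the basis $\CB_c(G,\fk s)$ inside $\overline R_c(G,\fk s)_\bQ$, the point being that $K$-multiplicity functionals vanish on $R_\diff(G)_\bQ$ since $i_P^G(\sigma)|_K=i_P^G(\sigma\otimes\chi)|_K$. Your write-up merely makes explicit the change-of-basis bookkeeping that the paper leaves implicit, so no further comment is needed.
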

Notice that, in this version, we cannot say that the scalars $w_i$ are integral, only rational.
\begin{proof}
In light of Theorem \ref{t:signature}, it is sufficient to show that the $K$-character of every irreducible tempered representation $V'$ with inertial support $\fk s$ can be written as a rational combination (independent of $K$) of $\CB_c(G,\fk s).$ But this is equivalent to writing $V'$ in $\overline R_c(G,\fk s)_\bQ$ in terms of the basis $\CB_c(G,\fk s).$
\end{proof}

\begin{remark}
An important question is if there is a natural basis $\C B_c(G,\fk s)$ or more generally a basis of all of $\overline R_c(G)$ or $\overline R_c(G)_\bQ$. The analogy is that for $(\fg,K)$-modules of real reductive groups, a fundamental theorem of Vogan says that the set of irreducible tempered $(\fg,K)$-modules is a basis (over $\bZ$!) of the analogous quotient of the Grothendieck group of admissible $(\fg,K)$-modules. In the setting of $p$-adic groups, we may decompose (see for example \cite[section 4]{Da} or \cite[Proposition 6.5]{CH} for the Iwahori case)
\begin{equation}
\overline R_c(G)_\bQ=\left(\bigoplus_{M\le G } i_M^G(\overline R_\el(M)_{\bQ,\C X(M)})\right)/\sim,
\end{equation}
where $\overline R_\el(M)_{\bQ,\C X(M)}$ denotes the space of $\C X(M)$-coinvariants of $\overline R_\el(M)_\bQ$, the direct sum is over the set of standard Levi subgroups $M$, and $(M,\sigma)\sim (M',\sigma')$ if there exists $w\in W(G,A)$ such that $M'=w(M)$ and $\sigma'=w(\sigma)$. However, beyond this point, we do not know how to make canonical choices for the basis of $\overline R_\el(M)_{\bQ,\C X(M)}$. In fact, the example of the two (Iwahori-spherical) tempered direct summands of the minimal principal series representation of $SL(2,\bQ_p)$ induced from the unramified quadratic character appears to suggest that a canonical choice for irreducible elliptic representations may not be possible.
\end{remark}

\subsection{} Let $e$ be an idempotent in $\CH(G)$ and $(\pi,V)$ an irreducible representation in $\C C_e(G)$. Suppose that $(\pi,V)$ carries a hermitian $*$-invariant form $\<~,~\>_V$. If $e^*=e$, we may restrict $\<~,~\>_V$ to $\pi(e)V$ and get an invariant hermitian form $\<~,~\>_{V,e}$:
\[
\<\pi(e f e)\pi(e) v_1,\pi(e)v_2\>_{V,e}=\<\pi(e) v_1,\pi(e^* f^* e^*)\pi(e)v_2\>_{V,e}=\<\pi(e) v_1,\pi(e f^* e)\pi(e)v_2\>_{V,e}.
\]
Since $e$ is a self-adjoint idempotent
\[\<\pi(e)v_1,v_2\>_V=\<\pi(e)\pi(e)v_1,v_2\>_V=\<\pi(e)v_1,\pi(e^*)v_2\>_V=\<\pi(e)v_1,\pi(e)v_2\>_v.
\]
This implies that if $\<~,~\>_V$ is nondegenerate, so is $\<~,~\>_{V,e}$. Of course, if $\<~,~\>_V$ is positive definite, so is $\<~,~\>_{V,e}$, which means that
\begin{equation}
\text{if } V \text{ is unitary then }\pi(e)V\text{ is unitary}.
\end{equation}
Conversely, suppose that $(\bar \pi,U)$ is an $e\CH e$-module with an $e\CH e$-invariant hermitian form $\<~,~\>_U$. The functor from $e\CH e$-modules to $\CH$-modules takes $U$ to $\CH e\otimes_{e\CH e}U$. For $f_1,f_2\in \CH e$, define
\begin{equation}
\<f_1\otimes u_1,f_2\otimes u_2\>=\<\bar \pi(e f_2^* f_1 e) u_1,u_2\>_U,
\end{equation}
an invariant hermitian form on $\CH e\otimes_{e\CH e}U$.

\subsection{}The difficult part of the unitarity equivalence is to show that if  $(\pi,V)$ carries a nondegenerate hermitian $*$-invariant form $\<~,~\>_V$ such that $\<~,~\>_{V,e}$ is positive definite, then $\<~,~\>_V$ is positive definite, i.e., if $\pi(e)V$ is unitary, then $V$ is unitary.

For this, we need to restrict to the case of types. Let $(\C K,\rho)$ be a type as before and let $(\pi,V)$ be an irreducible representation in $\C C_{e_\rho}(G)$.

For every compact open subgrup $K\supseteq \C K$, we have defined the notions of $K$-multiplicity and $K$-signature of $V,\<~,~\>_V$. Define the notions of $e_\rho H(K)e_\rho$-multiplicty and signature of $\pi(e_\rho)V,\<~,~\>_{V,e_\rho}$:
\begin{equation}
m_{\pi(e_\rho)V}^{e_\rho H(K)e_\rho},\quad \sg_{\pi(e_\rho)V}^{e_\rho H(K)e_\rho},
\end{equation}
in exactly the same way. Using Theorem \ref{t:signature2}, write
\begin{equation}
\sg_V^K=\sum_{i=1}^n w_i m_{V_i}^K=\sum_{i=1}^n a_i m_{V_i}^K+s\sum_{i=1}^n b_i m_{V_i}^K,
\end{equation}
for certain $a_i,b_i\in \bZ$, independent of $K$. The signature $\sg_{\pi(e_\rho)V}^{e_\rho H(K)e_\rho}$ is obtained by simply restricting to the $K$-types $(\mu,E_\mu)$ such that $\pi(e_\rho)E_\mu\neq 0$. Hence
\begin{equation}
\sg_{\pi(e_\rho)V}^{e_\rho H(K)e_\rho}=\sum_{i=1}^n a_i m_{\pi(e_\rho)V_i}^{e_\rho H(K)e_\rho}+s\sum_{i=1}^n b_i m_{\pi(e_\rho)V_i}^{e_\rho H(K)e_\rho}
\end{equation}
If $\<~,~\>_{V,e_\rho}$ is positive definite, it implies that
\begin{equation}\label{e:zero}
\sum_{i=1}^n b_i m_{\pi(e_\rho)V_i}^{e_\rho H(K)e_\rho}=0, \text{ equivalently }m_{\pi(e_\rho)\C V}^{e_\rho H(K)e_\rho}=0,\text{ where }\C V=\sum_{i=1}^n b_i V_i\in \overline R_c(G,\rho).
\end{equation}
The condition in (\ref{e:zero}) holds for all $K\supset\C K$, and therefore, when $(\C K,\rho)$ is rigid, it can be rephrased in terms of the notation in section \ref{s:rigid} as saying that
\begin{equation}
\tr(f,\C V)=0\text{ for all } f\in \overline{e_\rho \CH_c e_\rho}.
\end{equation}
Then Corollary \ref{c:rigid-type} implies that
\begin{equation}
\C V=0 \text{ in } \overline R_c(G,\rho).
\end{equation}
Now since $\{V_i\}$ is a basis of $\overline R_c(G,\rho)$, we get that
\begin{equation}
b_i=0\text{ for all }i,
\end{equation}
and so $\<~,~\>_V$ is also positive definite.\qed

\subsection{}Remark that the method of proof fails if one only considers the signature characters with respect to a fixed maximal compact open subgroup, for example the maximal special subgroup $K_0$. The reason is that 
\begin{equation}
\sum_{i=1}^n b_i m_{\pi(e_\rho)V_i}^{e_\rho H(K_0)e_\rho}=0
\end{equation}
does not necessarily imply that $\sum_{i=1}^n b_i V_i=0$  in  $\overline R_c(G,\rho).$ For example, take $G=SL(2,\bQ_p)$, $K_0=SL(2,\bZ_p)$, and $(\C K,\rho)=(I,1_I)$, where $I\subset K_0$ is an Iwahori subgroup. In that case, $\overline R_c(G,e_I)$ is $3$-dimensional, but there are only two irreducible $K_0$-types with $I$-fixed vectors.

\subsection{}Corollary \ref{c:main} follows from Theorem \ref{t:main} via a well-known argument, as applied for example in \cite[section 4]{BHK} to deduce the preservation of Plancherel measures. We include it here for completeness. We follow the notation in \cite{BHK}. Recall that a normalized Hilbert algebra $A$ is an associative unital $\bC$-algebra with a star operation $*$ and an inner product $[~,]_A$ such that $[1_A,1_A]=1$. We do not reproduce the axioms of compatibility between $[~,~]_A$ and $*$, but we refer to \cite[Definition 3.1]{BHK} for the details.

There are three normalized Hilbert algebras that enter in the picture for a type $(\C K,\rho)$ such that $e_\rho^*=e_\rho$. Recall that $(\rho,W)$ is an irreducible smooth $\C K$-representation with contragredient $(\rho^\vee,W^\vee)$. The first algebra is $e_\rho\CH e_\rho$  where the Hilbert product is
\begin{equation}
[a,b]_{e_\rho\CH e_\rho}=\frac 1{\dim W} (a^*\star b)(1_G),\quad a,b\in e_\rho\CH e_\rho.
\end{equation}
The second is 
\[\C E_\rho=\End_\bC[W].
\]
We fix a positive definite $\C K$-invariant form $\<~,~\>_{\rho}$ on $W$ and define a star operation $a\mapsto a^*$ on $\C E_\rho$ via
\begin{equation}
\<a^*(v),w\>_{\rho}=\<v,a(w)\>_\rho,\quad \text{for all }v,w\in W.
\end{equation}
The inner product is $[~,~]_{E_\rho}$:
\begin{equation}
[a,b]_{E_\rho}=\frac 1{\dim W}\tr_{W}(a^*b).
\end{equation}
The same definitions apply to $\C E_{\rho^\vee}=\End_\bC[W^\vee].$ The transpose map $a\to a^t$ defines an isomorphism of Hilbert algebras between $\C E_\rho$ and $\C E_{\rho^\vee}$.

Finally, $\CH(G,\rho)$ is also a Hilbert algebra with involution $h\mapsto h^*$ defined by
\begin{equation}
h^*(x)=(h(x^{-1}))^*, \quad x\in G,
\end{equation}
and inner product
\begin{equation}
[h_1,h_2]_{\CH(G,\rho)}=\frac{\mu(\C K)}{\dim W}\tr_{W^\vee}((h_1^*\star h_2)(1_G)).
\end{equation}
As in \cite[\S4.4]{BHK}, define the tensor product 
\begin{equation}
\CH_W(G,\rho)=\CH(G,\rho)\otimes_{\bC} \C E_\rho,
\end{equation}
and endow it with a Hilbert algebra structure by using the product star operation and the product inner product. For every pair $(h,a)\in \CH(G,\rho)\times_{\bC} \C E_\rho$, define the function 
\begin{equation}
f_{(h,a)}(x)=\dim W \tr_{W^\vee}(h(x)a^t),\quad x\in G.
\end{equation}
\begin{proposition}[{\cite[Proposition 4.4]{BHK}}]\label{p:Hilbert}
The assignment $(h,a)\mapsto f_{(h,a)}$ induces an isomorphism of normalized Hilbert algebras 
\[\C H_W(G,\rho)=\CH(G,\rho)\otimes \C E_\rho\cong e_\rho\CH e_\rho.
\]
\end{proposition}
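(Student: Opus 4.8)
The plan is to verify directly that $(h,a)\mapsto f_{(h,a)}$ is (i) well defined as a map $\C H_W(G,\rho)=\CH(G,\rho)\otimes\C E_\rho\to e_\rho\CH e_\rho$, (ii) a unital algebra homomorphism, (iii) bijective, and (iv) compatible with the star operations and the three normalized Hilbert inner products. Steps (i), (ii), (iv) are bare manipulations of convolution integrals and traces; the only conceptual input is needed for (iii), where I would invoke a standard structural identification of the two algebras rather than write down the inverse by hand.

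For (i), I would compute $e_\rho\star f_{(h,a)}$ directly: using the left $\C K$-equivariance $h(k^{-1}x)=\rho^\vee(k^{-1})h(x)$ one finds
\[(e_\rho\star f_{(h,a)})(x)=\dim W\cdot\tr_{W^\vee}\!\left(\left[\tfrac{\dim W}{\mu(\C K)}\int_{\C K}\tr_W(\rho(k^{-1}))\,\rho^\vee(k^{-1})\,dk\right]h(x)\,a^t\right),\]
and the bracketed operator equals $\Id_{W^\vee}$ by the Schur orthogonality relations for the compact group $\C K$; hence $e_\rho\star f_{(h,a)}=f_{(h,a)}$, and symmetrically $f_{(h,a)}\star e_\rho=f_{(h,a)}$ from the right equivariance. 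For (ii) I would expand $(f_{(h_1,a_1)}\star f_{(h_2,a_2)})(x)=\int_G f_{(h_1,a_1)}(y)\,f_{(h_2,a_2)}(y^{-1}x)\,dy$, write one of the two traces as a sum of matrix coefficients, fold the $\C K$-part of the $y$-integral using bi-equivariance, and apply cyclicity of the trace; the outcome is $f_{(h_1\star h_2,\,a_1a_2)}(x)$, the transpose $a\mapsto a^t$ reversing the order of the $a$'s exactly as required since $(a_1a_2)^t=a_2^ta_1^t$. Unitality is the identity $f_{(1_{\CH(G,\rho)},\Id_W)}=e_\rho$, which follows from $1_{\CH(G,\rho)}(x)=\mu(\C K)^{-1}\delta_{\C K}(x)\,\rho^\vee(x)$ and $\tr_{W^\vee}(\rho^\vee(x))=\tr_W(\rho(x^{-1}))$.

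For (iii) I would argue structurally. As a smooth $G$-representation, $\CH(G)\star e_\rho$ is isomorphic to $\cind_{\C K}^G\rho\otimes_\bC W^\vee$: indeed $\CH(\C K)\cong\bigoplus_{\sigma\in\widehat{\C K}}\End(E_\sigma)$, the idempotent $e_\rho$ is the identity of the $\rho$-block $\End(W)$, so $\CH(G)e_\rho\cong\CH(G)\otimes_{\CH(\C K)}\End(W)\cong\big(\CH(G)\otimes_{\CH(\C K)}W\big)\otimes_\bC W^\vee=\cind_{\C K}^G\rho\otimes_\bC W^\vee$. Realizing $e_\rho\CH e_\rho$ as the endomorphism algebra of the left $\CH(G)$-module $\CH\star e_\rho$ then yields
\[e_\rho\CH e_\rho\;\cong\;\End_G\!\big(\cind_{\C K}^G\rho\big)\otimes\End(W^\vee)\;\cong\;\CH(G,\rho)\otimes\C E_\rho,\]
where the first factor is identified with $\CH(G,\rho)$ by Frobenius reciprocity — the contragredient $\rho^\vee$ and the transpose built into the formula for $f_{(h,a)}$ are precisely what absorb the unavoidable ``opposite'' in this identification and in $\End(W^\vee)^{\mathrm{op}}\cong\C E_\rho$. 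Tracing an element through these isomorphisms recovers $f_{(h,a)}$, giving bijectivity. Alternatively one argues by hand: injectivity is immediate, since $f_{(h,a)}\equiv 0$ forces $\tr_{W^\vee}(h(x)a^t)=0$ for all $a$ and $x$, hence $h\equiv 0$ and then $a=0$; surjectivity follows from a level-by-level dimension count, comparing the double-coset bases of $e_\rho\CH(G,K')e_\rho$ and of $\CH(G,\rho)\otimes\C E_\rho$ as $K'$ runs over compact open normal subgroups of $\C K$.

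Finally, for (iv): from $f_{(h,a)}^*(x)=\overline{f_{(h,a)}(x^{-1})}=\dim W\cdot\overline{\tr_{W^\vee}(h(x^{-1})a^t)}$ and the fact that conjugation of a trace is the trace of the adjoint taken with respect to the fixed positive forms on $W$ and $W^\vee$, one reads off $f_{(h,a)}^*=f_{(h^*,a^*)}$; and expanding $\tfrac1{\dim W}(f_{(h_1,a_1)}^*\star f_{(h_2,a_2)})(1_G)$ with the trace formula and cyclicity produces a product of $\tr_{W^\vee}\big((h_1^*\star h_2)(1_G)\big)$ with $\tr_W(a_1^*a_2)$, which up to the built-in constants is $[(h_1,a_1),(h_2,a_2)]_{\C H_W(G,\rho)}$. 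I expect the real work here to be bookkeeping rather than ideas: one must keep the factors $\dim W$ and $\mu(\C K)$, and the transpose/opposite conventions, straight across the definitions of all three normalized Hilbert algebras so that products, stars, inner products, and the unit normalization all line up exactly — this is the one place where the argument is genuinely error-prone.
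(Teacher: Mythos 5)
The paper does not actually prove this statement: it is quoted verbatim from \cite[Proposition 4.4]{BHK} (whose proof rests on the underlying algebra isomorphism of \cite[(2.12)]{BK}), so your attempt has to stand on its own. Your steps (i), (ii) and (iv) do: the Schur-orthogonality evaluation showing $e_\rho\star f_{(h,a)}=f_{(h,a)}=f_{(h,a)}\star e_\rho$, the folding of a $\C K$-integral to get $f_{(h_1,a_1)}\star f_{(h_2,a_2)}=f_{(h_1\star h_2,\,a_1a_2)}$, the identity $f_{(1,\Id_W)}=e_\rho$, and the compatibility with $*$ are exactly the computations behind the cited result. For the inner products, the useful observation is that $(h_1^*\star h_2)(1_G)$ commutes with $\rho^\vee(\C K)$ and hence is a scalar, which is what makes $\tr_{W^\vee}\bigl((h_1^*\star h_2)(1_G)(a_1^*a_2)^t\bigr)$ factor as the product of the two inner products; when you do this bookkeeping, note $e_\rho(1_G)=(\dim W)^2/\mu(\C K)$, so the normalization of $[\,,\,]_{e_\rho\CH e_\rho}$ has to be read as division by $e_\rho(1_G)$ for the constants to match exactly --- this is precisely the error-prone point you flag.

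The genuine gap is in (iii). The ``immediate'' injectivity argument is incorrect as written: a general element of $\CH(G,\rho)\otimes\C E_\rho$ is a \emph{sum} of pure tensors, so you cannot vary $a$ while holding $h$ fixed; and even for a single pure tensor, $\tr_{W^\vee}(h(x)a^t)=0$ for all $x$ does not force $h=0$ and $a=0$. The structural (Morita-style) route is the right idea, but as stated it only produces \emph{some} isomorphism $e_\rho\CH e_\rho\cong\CH(G,\rho)\otimes\C E_\rho$ (after sorting out the opposite-algebra issues you mention); the content of the proposition is that the specific assignment $(h,a)\mapsto f_{(h,a)}$, with these normalizations, implements it, and ``tracing an element through these isomorphisms'' is exactly the step you omit. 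A self-contained repair: the map preserves supports on double cosets $\C K x\C K$; for $f=\sum_i f_{(h_i,a_i)}$ supported on a fixed coset, a second application of Schur orthogonality --- integrating $f(k_1xk_2)$ against $\rho^\vee(k_1^{-1})\otimes\rho^\vee(k_2^{-1})$ over $\C K\times\C K$ --- recovers $\sum_i h_i(x)\otimes a_i$ up to an explicit invertible rearrangement, and each $h_i$ is determined on $\C K x\C K$ by $h_i(x)$ via the bi-equivariance; this gives injectivity. Surjectivity then follows either from the coset-by-coset dimension count (both sides have dimension $(\dim W)^2$ times the dimension of the relevant intertwining space attached to $x$) or, as in \cite{BHK}, by writing down the inverse map explicitly as a $\C K\times\C K$-average of $f$ against $\rho$.
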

In particular, we have a natural bijection between the unitary modules for the two algebras. Finally, the correspondence
\begin{equation}\label{e:Morita}
M\mapsto M\otimes W
\end{equation}
is an equivalence of categories between $\CH(G,\rho)$-modules and $\CH_W(G,\rho)$-modules. Since $W$ is a unitary $\C E_\rho$-module, this equivalence restricts to a natural bijection of unitary modules. Thus Proposition \ref{p:Hilbert} together with (\ref{e:Morita}) imply that the natural equivalence of categories between $e_\rho\CH e_\rho$-modules and $\C H(G,\rho)$-modules induces a bijection of unitary modules. Hence Corollary \ref{c:main} follows from Theorem \ref{t:main}.\qed

\section{Rigid types}\label{s:rigid-types}

We retain the notation from section \ref{s:rigid}. 

\subsection{} Let $(\C K,\rho)$ be a type and suppose that there exists an Iwahori subgroup $I$ such that $\C K\subseteq I$. Let $\C P_{\max}$ be the set of maximal compact open subgroups (maximal parahoric subgroups) containing $I$. Since $\C P_{\max}$ provide a complete set of representatives for the $G$-conjugacy classes of maximal compact open subgroups of $G$, the natural map
\[
\bigoplus_{K\in \C P_{\max}} \C H(K)\to \overline \CH_c(G) 
\]
is surjective and therefore
\begin{equation}
\bigoplus_{K\in \C P_{\max}} e_\rho\C H(K)e_\rho\to \overline {e_\rho\CH_c(G) e_\rho}
\end{equation}
is surjective. Since $\C K\subset K$ for every $K\in \C P_{\max}$, this shows that $(\C K,\rho)$ is rigid in the sense of Definition \ref{d:rigid-type}.

\subsection{} We now look at the case of level zero types \cite{Mo}, \cite{Lu2}. We refer to the {\it loc. cit.} for the necessary structural results from Bruhat-Tits theory, in particular, about parahoric subgroups. Fix an Iwahori subgroup $I$, and recall the Bruhat decomposition
\[G=\sqcup_{w\in W} I \dot w I,
\]
where $W$ is the Iwahori-Weyl group and $\dot w$ is a representative in $G$ of $w$. Let $\Pi$ be the set of affine simple roots defined by $I$ and $W'$ the affine Weyl group. We have $W=W'\rtimes\Omega$, where $\Omega$ can be identified with the subgroup of $W$ of elements of length zero. Let $P\supseteq I$ be a parahoric subgroup. It corresponds to a subset $J\subsetneq\Pi$ and we write $P=P_J$ to emphasize this relation. Let $W_J$ denote the (finite) subgroup of $W$ generated by the reflections $s_\al$, $\al\in J$. There is a one-to-one correspondence 
\begin{equation}
W_J\backslash W/W_J\longleftrightarrow P_J\backslash G/P_J,\ W_J w W_J\mapsto P_J\dot wP_J=\cup_{u\in W_J w W_J} I\dot u I.
\end{equation}
Define
\begin{equation}
W(J)=\{w\in N_W(W_J)\mid w\text{ is the minimal length representative of  }W_JwW_J\}.
\end{equation}
The set $W(J)$ is a subgroup of $W$. 

\begin{definition}
A level zero type is a pair $(P,\rho)$, where $P=P_J$ is a parahoric subgroup and $\rho$ is a representation of $P_J$ inflated from a cuspidal representation of the finite reductive quotient $M_J$ of $P_J$.
\end{definition}

The main result of \cite{Mo} is a description of the $\rho$-spherical Hecke algebra $\CH(G,\rho)$. The starting essential observation \cite[Theorem 4.15]{Mo} is that a coset $P\dot w P$, with $w$ of minimal length in $W_J w W_J$, supports a nonzero element of $\CH(G,\rho)$ only if
\begin{equation}
w\in N_W(W_J)\text{ and } ^w\rho\cong\rho.
\end{equation}
Set 
\begin{equation}
W(\rho)=\{w\in W(J)\mid ~^w\rho\cong\rho\}.
\end{equation}
The element $\dot w$ is compact if and only if $w$ has finite order in $W$. Therefore  the compact cocenter of $\CH(G,\rho)$ (respectively, of $e_\rho\CH(G) e_\rho$) is generated by elements (respectively, functions) supported on cosets
\begin{equation}
P_J\dot w P_J,\ w\in W(\rho)\text{ of finite order}.
\end{equation}
Suppose $P_J\dot w P_J$ is such a coset. Since $w\in N_W(W_J)$ and $w$ has finite order, the subgroup $\langle w, W_J\rangle$ of $W$ is finite. By the Bruhat-Tits fixed point theorem \cite[(3.2.4)]{BT}, there exists a subset $J'\subsetneq\Pi$ with $J\subseteq J'$ and $\Omega'$ a finite subgroup of $\Omega$ such that $\langle w, W_J\rangle\subset W_{J'}\rtimes\Omega'$. Set 
\begin{equation}
K_{J'}=\cup_{u\in W_{J'}\rtimes\Omega'} I\dot u I.
\end{equation}
This is a compact open subgroup of $G$ and $P_J\subset K_{J'},\ P_J\dot w P_J\subset K_{J'}.$ This shows:

\begin{lemma}
Every level zero type $(P,\rho)$ is rigid, hence Theorem \ref{t:main} applies to $(P,\rho)$.
\end{lemma}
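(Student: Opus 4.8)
The plan is to verify that the hypothesis of Definition \ref{d:rigid-type} holds for a level zero type $(P,\rho)$ with $P=P_J$. By the analysis preceding the lemma, the image of $e_\rho\CH_c(G)e_\rho$ in $\overline{e_\rho\CH_c(G)e_\rho}$ is generated by the classes of those functions in $e_\rho\CH e_\rho$ supported on a single double coset $P_J\dot w P_J$ with $w\in W(\rho)$ of finite order in $W$ (these are the only double cosets that both support a nonzero element of $e_\rho\CH e_\rho$, by \cite[Theorem 4.15]{Mo}, and consist of compact elements, by the finite-order condition). So it suffices to show that each such generator already lies in the image of $\bigoplus_{K\supseteq\C K}e_\rho\CH(K)e_\rho$.

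The key step is the application of the Bruhat--Tits fixed point theorem. Fix such a $w$. Since $w$ normalizes $W_J$ and has finite order, the subgroup $\langle w,W_J\rangle$ of the Iwahori--Weyl group $W$ is finite; by \cite[(3.2.4)]{BT} it fixes a point of the Bruhat--Tits building, hence is contained in some $W_{J'}\rtimes\Omega'$ with $J\subseteq J'\subsetneq\Pi$ and $\Omega'$ a finite subgroup of $\Omega$. Then $K_{J'}=\cup_{u\in W_{J'}\rtimes\Omega'}I\dot u I$ is a compact open subgroup of $G$ containing both $P_J$ (since $W_J\subseteq W_{J'}$) and the double coset $P_J\dot w P_J$ (since $w\in W_{J'}\rtimes\Omega'$ together with the Bruhat decomposition). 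In particular $K_{J'}\supseteq \C K=P_J$, and any function in $e_\rho\CH e_\rho$ supported on $P_J\dot w P_J$ is a function on $K_{J'}$, i.e. lies in $e_\rho\CH(K_{J'})e_\rho$. Summing over the finitely many relevant $w$ (or rather over the $K_{J'}$ that arise) shows that the composite map in Definition \ref{d:rigid-type} is surjective, so $(P,\rho)$ is rigid; Theorem \ref{t:main} then applies directly.

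I expect the main obstacle to be purely bookkeeping rather than conceptual: one must be careful that the \emph{compact} part of the cocenter really is generated by functions supported on the finite-order double cosets (as opposed to needing the full cocenter), and that passing to $e_\rho(-)e_\rho$ does not destroy this generation statement — but both points follow from the observation, recalled in the excerpt, that $\dot w$ is compact if and only if $w$ has finite order, together with $e_\rho$ being supported in $G_c^0$. One should also note that $K_{J'}$ depends on $w$, but since there are only finitely many double cosets in $P_J\backslash G/P_J$ that support elements of the finite-dimensional space $\overline{e_\rho\CH_c(G)e_\rho}$, only finitely many subgroups $K_{J'}$ occur, and it is their direct sum that surjects. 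With these remarks in place the proof is complete.
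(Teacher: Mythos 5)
Your argument is correct and is essentially the paper's own proof: you reduce, via Morris's support theorem and the criterion that $\dot w$ is compact iff $w$ has finite order, to generators of $\overline{e_\rho\CH_c(G)e_\rho}$ supported on double cosets $P_J\dot w P_J$ with $w\in W(\rho)$ of finite order, and then use the Bruhat--Tits fixed point theorem to place each such coset inside a compact open subgroup $K_{J'}=\cup_{u\in W_{J'}\rtimes\Omega'}I\dot u I$ containing $P_J$, exactly as in the paper. Your closing finiteness remark about the collection of subgroups $K_{J'}$ is not needed (the direct sum in Definition \ref{d:rigid-type} already runs over all compact open $K\supseteq \C K$, so it suffices that each generator lies in the image of some single $e_\rho\CH(K_{J'})e_\rho$), but this does not affect the correctness.
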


As shown in \cite[\S 7.3]{Mo}, the group $W(\rho)$ can be decomposed canonically as
\begin{equation}
W(\rho)=R(\rho)\rtimes C(\rho),
\end{equation}
where $R(\rho)$ is $1$ if $|\Pi\setminus J|=1$, and when $|\Pi\setminus J|\ge 2$, $R(\rho)$ is an affine Weyl group generated by a certain finite set of simple reflections $S(\rho)$. Let $\Delta$ be the set of simple affine roots corresponding to $S(\rho)$ and denote by $v_a\in S(\rho)$ the reflection corresponding to $a\in\Delta$. The only property that we need to record about $S(\rho)$ is that each element of $S(\rho)$ lies in a $W_K$ where $J\subsetneq K$. Let $\mu: W(\rho)\times W(\rho)\to \bC^\times$ be the 2-cocycle computed in \cite[\S7.11]{Mo}. It has the following properties \cite[Lemma 6.2, Lemma 7.11]{Mo}:
\begin{equation}
\begin{aligned}
&\mu(sv,sw)=\mu(s,t),\ s,t\in C(\rho), &v,w\in R(\rho);\\
&\mu(w,1)=\mu(1,w)=1,\ \mu(w,w^{-1})=1, &w\in W(\rho);\\
&\mu(w_1,w_2)\cdot \mu(w_2^{-1},w_1^{-1})=1, &w_1,w_2\in W(\rho).
\end{aligned}
\end{equation}

\begin{theorem}\label{t:morris} The algebra $\CH(G,\rho)$ has a basis given by elements $T_w$, $w\in W(\rho)$, where $T_w$ is supported on the coset $P\dot w P$, subject to the following relations. Let $w\in W(\rho)$, $t\in C(\rho)$, $v_a\in S(\rho)$, $a\in \Delta$. Then:
\begin{enumerate}
\item $T_w T_t=\mu(w,t) T_{wt}$; $T_t T_w=\mu(t,w) T_{tw}$.
\item $T_{v_a}T_w=\begin{cases}T_{v_a w},&\text{if } w^{-1}(a)>0,\\ p_a T_{v_a w}+(p_a-1)T_w,&\text{if }w^{-1}(a)<0.\end{cases}$
\item $T_{w}T_{v_a}=\begin{cases}T_{wv_a},&\text{if } w(a)>0,\\ p_a T_{wv_a}+(p_a-1)T_w,&\text{if }w(a)<0.\end{cases}$
\end{enumerate}
Here $p_a\neq 1$ are certain nonnegative powers of the residual characteristic.
\end{theorem}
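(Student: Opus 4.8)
The plan is to reconstruct Morris's analysis in \cite{Mo}, in three stages: identify which double cosets support elements of $\CH(G,\rho)$, extract from this the basis $\{T_w\}$, and then compute the products by reducing them to rank-one situations governed by finite reductive quotients.

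First I would pin down the support. By the double coset parametrization $P_J\backslash G/P_J\leftrightarrow W_J\backslash W/W_J$ together with \cite[Theorem 4.15]{Mo}, an element $h\in\CH(G,\rho)$ supported on a single double coset $P\dot w P$ (with $w$ of minimal length in $W_J w W_J$) must vanish unless $w\in N_W(W_J)$ and ${}^w\rho\cong\rho$, that is, unless $w\in W(\rho)$. Conversely, for such a $w$ the space of $\End_\bC(W^\vee)$-valued functions on $P\dot w P$ that transform by $\rho^\vee$ on both sides is one-dimensional, because $\dot w$ intertwines $\rho$ with itself and $\rho$ is irreducible (Schur's lemma). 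Choosing a generator $T_w$ of each such line, normalized so that $T_1$ is the identity element of $\CH(G,\rho)$, the family $\{T_w : w\in W(\rho)\}$ is linearly independent (the supports $P\dot w P$ are pairwise disjoint) and spans (every $h$ is a finite sum of its restrictions to the cosets $P\dot w P$, each of which is a multiple of the corresponding $T_w$). This proves the basis assertion.

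Next I would compute the products $T_u T_w$. Since $\supp(T_u T_w)\subseteq P\dot u P\dot w P$, the product is a linear combination of those $T_x$ with $P\dot x P\subseteq P\dot u P\dot w P$, the coefficients being read off from the convolution integral, i.e.\ from counting the $I$-double cosets in the relevant intersections and evaluating $\rho^\vee$ on the intertwining elements. For $u=t\in C(\rho)$ one checks that $P\dot t P\dot w P=P\dot{tw}P$ is a single coset --- the element $t$ contributes no length and $C(\rho)$ acts by automorphisms of the combinatorial data --- so $T_t T_w$ is a scalar multiple of $T_{tw}$; identifying this scalar with the $2$-cocycle $\mu$ of \cite[\S7.11]{Mo} gives relation (1), and the cocycle identities recorded before the theorem are exactly the associativity constraints for this part of the presentation. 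For $u=v_a$ with $a\in\Delta$, I would use the structural fact recorded above that $v_a$ lies in some $W_K$ with $J\subsetneq K$, so that $\langle v_a,W_J\rangle$ is finite; by the Bruhat--Tits fixed point theorem, as in the rigidity discussion above, the entire computation of $T_{v_a} T_w$ then takes place inside the Hecke algebra of a compact open subgroup containing $P_J$, which is controlled by a finite reductive quotient and the cuspidal representation underlying $\rho$. There one is in the rank-one picture: if $w^{-1}(a)>0$, then $v_a w$ is again of minimal length in its double coset and $P\dot{v_a}P\dot w P=P\dot{v_aw}P$, whence $T_{v_a}T_w=T_{v_aw}$; if $w^{-1}(a)<0$, a length drop occurs, and a Bruhat-cell count in the finite group produces the quadratic term $T_{v_a}T_w=p_a T_{v_aw}+(p_a-1)T_w$ with $p_a$ a power of the residue characteristic. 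This is relation (2); relation (3) follows symmetrically by computing $T_w T_{v_a}$, or by applying the conjugate-linear anti-automorphism $h\mapsto h^*$, under which (2) and (3) correspond.

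The main obstacle is the last computation: verifying that the only double cosets occurring on the right-hand side of $T_{v_a}T_w$ are $P\dot{v_aw}P$ and, in the length-dropping case, $P\dot w P$, and that the coefficients are precisely $p_a$ and $p_a-1$. The first point rests on the exchange and length-additivity properties available because $R(\rho)$ is a genuine affine Coxeter group with the distinguished generating set $S(\rho)$; the second is the explicit intersection count inside $P_K\dot{v_a}P_K$ together with the evaluation of the character of $\rho$ on the intertwining elements --- precisely the detailed computations carried out in \cite[\S\S5--7]{Mo}, where the $p_a$ are also pinned down. Once relations (1)--(3) are in hand, the cocycle identities guarantee the presentation is consistent, and the theorem follows.
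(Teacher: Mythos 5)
The paper does not prove Theorem \ref{t:morris}: it is quoted as the main result of Morris \cite{Mo}, so there is no internal argument to compare yours against, and your sketch reconstructs the proof from that same source (support analysis via \cite[Theorem 4.15]{Mo} plus Schur's lemma for the basis, then convolution computations for the relations), deferring the decisive coset counts to \cite[\S\S 5--7]{Mo} exactly as the paper does. One step in your outline is stated too strongly: the Bruhat--Tits fixed point argument places the finite group $\langle v_a, W_J\rangle$, and hence only the rank-one computation (the quadratic relation for $T_{v_a}$, i.e.\ products of elements supported in the corresponding parahoric), inside a compact open subgroup controlled by a finite reductive quotient; it does not confine the product $T_{v_a}T_w$ for an arbitrary $w\in W(\rho)$, whose representative $\dot w$ is in general noncompact. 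The general relations (2) and (3) are instead obtained from that rank-one relation together with length-additivity statements for the \emph{relative} length function on $W(\rho)$ (note also that $t\in C(\rho)$ has length zero only in this relative sense, not in $W$), which themselves require the parahoric double-coset convolution estimates of Morris. Since you explicitly delegate those computations to \cite{Mo}, this is a misstatement of the reduction rather than a fatal gap, and with that correction your outline matches the cited proof.
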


In other words, $\CH(G,\rho)$ is the smash-product of an affine Hecke algebra with a twisted group algebra. This means that classification of unitary representations of level zero of the $p$-adic group $G$ is equivalent with the classification of the unitary dual of the Hecke algebras in Theorem \ref{t:morris}. When $G$ is adjoint and $\rho$ is unipotent, Lusztig \cite{Lu2} gives a complete description of $\CH(G,\rho)$. In that case, the cocycle $\mu$ is trivial, and $\CH(G,\rho)$ is an affine Hecke algebra with unequal parameters. 

\begin{remark}
When the cocycle $\mu$ is trivial, a precise description of the cocenter and compact cocenter of $\CH(G,\rho)$ is available by \cite{CH,HN}.
\end{remark}

\begin{remark}The dichotomy (positive versus zero levels) in the previous discussion is motivated by the unrefined minimal K-type theory of Moy and Prasad \cite[Theorem 5.2]{MP}.
\end{remark}

\subsection{}Suppose now that $G=GL(n,F)$ and $(\C K,\rho)$ is a type of $G$. In this case, there exists a unique conjugacy class of maximal compact subgroups represented by $K_0=GL(n,\C O)$, where $\CO$ is the ring of integers in $F$. Without loss of generality, we may assume that $\C K\subseteq K_0.$ Since every element in $\overline \CH_c(G)$ is represented by a function supported on $K_0$,  $\C K$ is trivially rigid.

\ifx\undefined\bysame
\newcommand{\bysame}{\leavevmode\hbox to3em{\hrulefill}\,}
\fi

\end{document}